\newtheorem{lem}{Lemma}
\newtheorem{prop}{Proposition}
\newtheorem{thm}{Theorem}
\newtheorem*{thm*}{Theorem}
\newtheorem{cor}{Corollary}
\theoremstyle{definition}
\newtheorem{defn}{Definition}
\newcommand{\E}{\mathbb{E}}
\newcommand{\Var}{\mathrm{Var}}
\renewcommand{\P}{\mathbb{P}}
\newcommand{\R}{\mathbb{R}}
\newcommand{\N}{\mathbb{N}}
\newcommand{\ones}{\boldsymbol{1}}
\newcommand{\bits}{\lbrace 0,~1\rbrace}
\newcommand{\diag}{\mathrm{diag}}
\newcommand{\TV}{\mathrm{TV}}
\newcommand{\pimin}{\pi_{\mathrm{min}}}
\newcommand{\eff}{\mathrm{eff}}
\newcommand{\sigf}{\sigma_{f}}
\newcommand{\sigfasym}{\sigma_{f,\mathrm{asym}}}
\newcommand{\parity}{f_{\mathrm{parity}}}
\newcommand{\alg}{\mathcal{A}}
\newcommand{\algfix}{\mathcal{A}_{\mathrm{fixed}}}
\newcommand{\algseq}{\mathcal{A}_{\mathrm{seq}}}
\newcommand{\algdiff}{\mathcal{A}_{\mathrm{hard}}}
\newcommand{\err}{\mathrm{err}}
\newcommand{\der}{\mathrm{d}}
\newcommand{\abslambda}{\lambda_{\ast}}
\newcommand{\Bern}{\mathrm{Bern}}
\newcommand{\Beta}{\mathrm{Be}}
\newcommand{\Norm}{\mathcal{N}}
\newcommand{\Dpi}{D}
\newcommand{\Tf}{T_{f}}
\newcommand{\df}{d_{f}}
\newcommand{\widgraph}[2]{\includegraphics[keepaspectratio,width=#1]{#2}}
\newcommand{\mydefn}{\ensuremath{: \, =}}
\newcommand{\Jmax}{\ensuremath{J_{\tiny{\mbox{max}}}}}
\newcommand{\real}{\ensuremath{\mathbb{R}}}
\newcommand{\Xtil}{\ensuremath{\widetilde{X}}}
\newcommand{\SPECIAL}{\ensuremath{F}}
\newcommand{\matsnorm}[2]{|\!|\!| #1 | \! | \!|_{{#2}}}
\newcommand{\opnorm}[1]{\ensuremath{\matsnorm{#1}{\tiny{\mbox{op}}}}}
\newcommand{\Event}{\ensuremath{\mathcal{E}}}
\newcommand{\UNIFINT}{I^{\mathrm{unif}}_{N}}
\newcommand{\FUNCINT}{I^{\mathrm{func}}_{N}}
\newcommand{\BEINT}{I^{\mathrm{BE}}_{N}}
\newcommand{\newalpha}{\beta}
\newcommand{\Nat}{\ensuremath{\mathbb{N}}}
\begin{document}

\begin{center}

{\bf{\LARGE{Function-Specific Mixing Times and \\ Concentration Away from
  Equilibrium}}}

\vspace*{.2in}

{\large{
\begin{tabular}{c}
Maxim Rabinovich, Aaditya Ramdas \\
Michael I. Jordan, Martin J. Wainwright \\
\texttt{\{rabinovich,aramdas,jordan,wainwrig\}@berkeley.edu}\\
University of California, Berkeley
\end{tabular}
}}

\vspace*{.2in}
\today
\vspace*{.2in}

\begin{abstract}
Slow mixing is the central hurdle when working with Markov chains,
especially those used for Monte Carlo approximations (MCMC).  In many
applications, it is only of interest to estimate the stationary
expectations of a small set of functions, and so the usual definition
of mixing based on total variation convergence may be too
conservative.  Accordingly, we introduce function-specific analogs of
mixing times and spectral gaps, and use them to prove Hoeffding-like
function-specific concentration inequalities.  These results show that
it is possible for empirical expectations of functions to concentrate
long before the underlying chain has mixed in the classical sense, and
we show that the concentration rates we achieve are optimal up to
constants.  We use our techniques to derive confidence intervals that
are sharper than those implied by both classical Markov chain
Hoeffding bounds and Berry-Esseen-corrected CLT bounds.  For
applications that require testing, rather than point estimation, we
show similar improvements over recent sequential testing results for
MCMC. We conclude by applying our framework to real data examples of
MCMC, providing evidence that our theory is both accurate and relevant
to practice.
\end{abstract}

\end{center}


\section{Introduction}

Methods based on Markov chains play a critical role in statistical
inference, where they form the basis of Markov chain Monte Carlo
(MCMC) procedures for estimating intractable expectations~\cite[see,
  e.g.,][]{Gel13BDA,Rob05MCMC}.  In MCMC procedures, it is the
stationary distribution of the Markov chain that typically encodes the
information of interest. Thus, MCMC estimates are asymptotically
exact, but their accuracy at finite times is limited by the
convergence rate of the chain.

The usual measures of convergence rates of Markov chains---namely, the
total variation mixing time or the absolute spectral gap of the
transition matrix~\citep{Lev08Markov}---correspond to very strong
notions of convergence and depend on global properties of the
chain. Indeed, convergence of a Markov chain in total variation
corresponds to uniform convergence of the expectations of all
unit-bounded function to their equilibrium values. The resulting
uniform bounds on the accuracy of
expectations~\citep{Chu12Hoeffding,Gil98Chernoff,Jou10Curvature,Kon14Uniform,Leo04Hoeffding,
  Lez01Berry,Pau12Conc,Sam00Concentration} may be overly pessimistic---not indicative of
the mixing times of specific expectations such as means and variances
that are likely to be of interest in an inferential setting.

Another limitation of the uniform bounds is that they typically assume
that the chain has arrived at the equilibrium distribution, at least
approximately.  Consequently, applying such bounds requires either
assuming that the chain is started in equilibrium---impossible in
practical applications of MCMC---or that the burn-in period is
proportional to the mixing time of the chain, which is also unrealistic, if
not impossible, in practical settings.

Given that the goal of MCMC is often to estimate specific
expectations, as opposed to obtaining the stationary distribution, in
the current paper we develop a function-specific notion of convergence
with application to problems in Bayesian inference. We define a notion
of ``function-specific mixing time,'' and we develop function-specific
concentration bounds for Markov chains, as well as spectrum-based
bounds on function-specific mixing times.  We demonstrate the utility
of both our overall framework and our particular concentration bounds
by applying them to examples of MCMC-based data analysis from the
literature and by using them to derive sharper confidence intervals
and faster sequential testing procedures for MCMC.


\subsection{Preliminaries}

We focus on discrete time Markov chains on $d$ states given by a $d
\times d$ transition matrix $P$ that satisfies the conditions of
irreducibility, aperiodicity, and reversibility.  These conditions
guarantee the existence of a unique stationary distribution $\pi$.
The issue is then to understand how quickly empirical averages of
functions of the Markov chain, of the form $f: [d] \to [0,1]$,
approach the stationary average, denoted by
\begin{align*}
 \mu & \mydefn \E_{X \sim \pi} [f(X)].
\end{align*}

The classical analysis of mixing defines convergence rate in terms of
the total variation distance:
\begin{align}
\label{eq:sup-TV}
d_{\TV} \big (p,~q \big) = \sup_{f \colon \Omega \rightarrow [0,~1]}
\Big| \E_{X\sim p}\big[f(X)\big] - \E_{Y \sim q} \big [ f(Y) \big]
\Big|,
\end{align}
where the supremum ranges over all unit-bounded functions.  The mixing
time is then defined as the number of steps required to ensure that
the chain is within total-variation distance $\delta$ of the
stationary distribution---that is
\begin{align}
\label{EqnDefnClassicalMixing}
T(\delta) & \mydefn \min \Big \{ n \in \Nat \; \mid \; \max_{i \in
  [d]} d_{\TV}\big(\pi_{n}^{(i)},~\pi\big) \leq \delta \Big \},
\end{align}
where $\Nat = \{1, 2, \ldots \}$ denotes the natural numbers,
and $\pi_{n}^{(i)}$ is the distribution of the chain state $X_{n}$
given the starting state $X_{0} = i$.

Total variation is a worst-case measure of distance, and the resulting
notion of mixing time can therefore be overly conservative when the
Markov chain is being used to approximate the expectation of a fixed
function, or expectations over some relatively limited class of
functions.  Accordingly, it is of interest to consider the following
function-specific discrepancy measure:
\begin{defn}[$f$-discrepancy]
For a given function $f$, the $f$-discrepancy is 
\begin{align}
d_{f} \big(p, ~q \big) = \big|\E_{X \sim p} \big [ f \big( X
  \big) \big] - \E_{Y \sim q} \big [ f \big( Y \big) \big ]
\big |.
\end{align}
\end{defn}
\noindent The $f$-discrepancy leads naturally to a function-specific
notion of mixing time:
\begin{defn}[$f$-mixing time] 
For a given function $f$, the $f$-mixing time is 
\begin{align}
\Tf\big(\delta\big) = \min \Big \{ n \in \Nat \; \mid \; \max_{i \in
  [d]} d_{f}\big(\pi_{n}^{(i)},~\pi\big) \leq \delta \Big \} .
\end{align}
\end{defn}
\noindent In the sequel, we also define function-specific notions of
the spectral gap of a Markov chain, which can be used to bound the
$f$-mixing time and to obtain function-specific concentration
inequalities.

\subsection{Related work}

Mixing times are a classical topic of study in Markov chain theory,
and there is a large collection of techniques for their
analysis~\citep[see,
  e.g.,][]{Ald86Shuffling,Dia90StrongStat,Lev08Markov,Mey12Markov,Oll09Ricci,Sin92Multicomm}.
These tools and the results based on them, however, generally apply
only to worst-case mixing times.  Outside of specific examples~\citep{Con06Riffle,Dia15Heisenberg}, 
relatively little is known about mixing with respect to individual functions or limited classes of
functions. Similar limitations exist in studies of concentration of
measure and studies of confidence intervals and other statistical
functionals that depend on tail probability bounds. Existing bounds
are generally uniform, or non-adaptive, and the rates that are
reported include a factor that encodes the global mixing properties of
the chain and does not adapt to the
function~\citep{Chu12Hoeffding,Gil98Chernoff,Jou10Curvature,Kon14Uniform,Leo04Hoeffding,Lez01Berry,Pau12Conc,Sam00Concentration}.
These factors, which do not appear in classic bounds for independent
random variables, are generally either some variant of the spectral
gap $\gamma$ of the transition matrix, or else a mixing time of the
chain $T\big(\delta_{0}\big)$ for some absolute constant $\delta_{0} >
0$. For example, the main theorem from \cite{Leo04Hoeffding} shows
that for a function $f \colon [d] \rightarrow [0,~1]$ and a sample
$X_{0} \sim \pi$ from the stationary distribution, we have
\begin{align}
\label{eq:unif-hoeffding}
\P \big( \big| \frac{1}{N}\sum_{n = 1}^{N} f\big(X_{n}\big) - \mu\big|
\geq \epsilon\big) \leq 2 \exp \Big \{ -\frac{\gamma_{0}}{2\big(2 -
  \gamma_0\big)} \cdot \epsilon^2N \Big \},
\end{align}
where the eigenvalues of $P$ are given in decreasing order as \mbox{$1
  > \lambda_2(P) \geq \cdots \geq \lambda_d(P)$,} and we denote the
spectral gap of $P$ by
\begin{align*}
\gamma_{0} \mydefn \min \big \{ 1 - \lambda_2(P),~1 \big \}.
\end{align*}
The requirement that the chain start in equilibrium can be relaxed by
adding a correction for the burn-in time~\citep{Pau12Conc}. Extensions
of this and related bounds, including bounded-differences-type
inequalities and generalizations to continuous Markov chains and
non-Markov mixing processes have also appeared in the literature
(e.g.,~\cite{Kon14Uniform,Sam00Concentration}).

The concentration result has an alternative formulation in terms of
the mixing time instead of the spectral gap~\citep{Chu12Hoeffding}.
This version and its variants are weaker, since the mixing time can be
lower bounded as
\begin{align}
\label{eq:Tmix-lbd-gamma}
T\big(\delta\big) \geq \big(\frac{1}{\gamma_{\ast}} - 1\big)
\log\big(\frac{1}{2\delta}\big) \geq \big(\frac{1}{\gamma_0} -
1\big)\log\big(\frac{1}{2\delta}\big),
\end{align}
where we denote the absolute spectral
gap~\citep{Lev08Markov} by
\[\gamma_{\ast} \mydefn \min\big(1 - \lambda_2 ,~1 -
\big|\lambda_{d}\big|\big) ~\leq~ \gamma_{0}.\]

In terms of the minimum probability $\pimin \mydefn \min_i \pi_i$, the
corresponding upper bound is an extra factor of
$\log\big(\frac{1}{\pimin}\big)$ larger, which potentially leads to a
significant gap between $\frac{1}{\gamma_{0}}$ and
$T\big(\delta_{0}\big)$, even for a moderate constant such as
$\delta_{0} = \frac{1}{8}$. Similar distinctions arise in our
analysis, and we elaborate on them at the appropriate junctures.


\subsection{Organization of the paper}

In the remainder of the paper, we elaborate on these ideas and apply
them to MCMC. In Section~\ref{sec:main-results}, we state some
concentration guarantees based on function-specific mixing times, as
well as some spectrum-based bounds on $f$-mixing times, and the
spectrum-based Hoeffding bounds they imply.
Section~\ref{sec:stat-apps} is devoted to further development of these
results in the context of several statistical models.  More
specifically, in Section~\ref{subsec:confidence}, we show how our
concentration guarantees can be used to derive confidence intervals
that are superior to those based on uniform Hoeffding bounds and
CLT-type bounds, whereas in Section~\ref{subsec:testing}, we analyze the consequences 
for sequential testing.  In Section~\ref{subsec:practical}, we show that our mixing time and
concentration bounds improve over the non-adaptive bounds in real
examples of MCMC from the literature.  Finally, the bulk of our proofs
are given in Section~\ref{sec:proofs}, with some more technical
aspects of the arguments deferred to the appendices.


\section{Main results}
\label{sec:main-results}

We now present our main technical contributions, starting with a set
of ``master'' Hoeffding bounds with exponents given in terms of
$f$-mixing times. As we explain in
Section~\ref{subsec:derived-hoeffding}, these mixing time bounds can
be converted to spectral bounds bounding the $f$-mixing time in terms
of the spectrum.  (We give some techniques for the latter in
Section~\ref{subsec:mixing-bounds}).

Recall that we use \mbox{$\mu \mydefn \E_{\pi} [f]$} to denote the
mean. Moreover, we follow standard conventions in setting
\begin{align*}
\lambda_{\ast} \mydefn \max \big \{ \lambda_2(P),~ \big|\lambda_{d}(P)
\big| \big \} , \quad \mbox{and} \quad \lambda_{0} \mydefn \max \big
\{ \lambda_2(P),~ 0 \big \}.
\end{align*}
so that the absolute spectral gap and the
(truncated) spectral gap introduced earlier are given by
$\gamma_{\ast} \mydefn 1 - \lambda_{\ast}, \quad \mbox{and} \quad
\gamma_{0} \mydefn 1 - \lambda_{0}.$
In Section~\ref{subsec:mixing-bounds}, we define and analyze
corresponding function-specific quantities, which we introduce as
necessary.

\subsection{Master Hoeffding bound}
\label{subsec:master-hoeffding}

In this section, we present a master Hoeffding bound that provides
concentration rates that depend on the mixing properties of the chain
only through the $f$-mixing time $T_{f}$. The only hypotheses on
burn-in time needed for the bounds to hold are that the chain has been
run for at least $N \geq T_{f}$ steps---basically, so that thinning is
possible---and that the chain was started from a distribution
$\pi_{0}$ whose $f$-discrepancy distance from $\pi$ is small---so that
the expectation of each $f\big(X_n\big)$ iterate is close to
$\mu$---even if its total-variation discrepancy from $\pi$ is
large. Note that the latter requirement imposes only a very mild
restriction, since it can always be satisfied by first running the
chain for a burn-in period of $T_{f}$ steps and then beginning to
record samples.

\begin{thm} 
\label{thm:hoeffding-eps2}
Given any fixed $\epsilon>0$ such that $\df \big( \pi_{0},~\pi \big)
\leq \frac{\epsilon}{2}$ and $N \geq \Tf \big(\frac{\epsilon
}{2}\big)$, we have
\begin{align}
\label{eq:hoeffding-eps2}
\P \Big[ \frac{1}{N}\sum_{n = 1}^{N} f\big(X_n\big) \geq \mu +
  \epsilon \Big] \leq \exp \left \{ -\frac{\epsilon^2
  N}{8\Tf\big(\frac{\epsilon}{2}\big)} \right \}.
\end{align}
\end{thm}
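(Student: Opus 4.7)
The plan is to handle the Markov dependence by thinning the chain into $T \mydefn \Tf(\epsilon/2)$ subsequences that are spaced $T$ steps apart, so that consecutive terms in each subsequence are close to ``mixed'' from one another in the $f$-sense. Concretely, for each residue $s \in \{1, \ldots, T\}$ let $S_s = \sum_{k=1}^{K_s} f(X_{s + (k-1)T})$, where $K_s$ counts the indices in $\{1, \ldots, N\}$ congruent to $s$ modulo $T$, so that $\sum_s S_s = \sum_{n=1}^N f(X_n)$ and $\sum_s K_s = N$. For each subsequence I would introduce the thinned filtration $\mathcal{G}_k^{(s)} = \sigma(X_s, X_{s+T}, \ldots, X_{s + (k-1)T})$ and set $G_k^{(s)} = \E[f(X_{s+(k-1)T}) \mid \mathcal{G}_{k-1}^{(s)}]$, writing $S_s = \sum_k G_k^{(s)} + M_s$ with $M_s$ the associated martingale. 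Since consecutive thinned samples are $T = \Tf(\epsilon/2)$ apart, the definition of the $f$-mixing time forces $|G_k^{(s)} - \mu| \le \epsilon/2$ almost surely for $k \ge 2$, and the starting-distribution hypothesis $\df(\pi_0, \pi) \le \epsilon/2$ is used to control $G_1^{(s)}$. Because $f \in [0,1]$, the martingale increments are bounded by $1$, so a standard application of Azuma's lemma gives
\[
\E[\exp(\lambda S_s)] \;\le\; \exp\bigl(\lambda K_s (\mu + \epsilon/2) + \tfrac{1}{2}\lambda^2 K_s\bigr).
\]

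The subsequences are not independent, so combining the per-subsequence MGFs by a union bound would leave a spurious prefactor of $T$. Instead, I would use generalized H\"older's inequality with equal exponents $p_s = T$ to obtain
\[
\E\Bigl[\exp\bigl(\lambda \textstyle\sum_n f(X_n)\bigr)\Bigr] = \E\Bigl[\textstyle\prod_s e^{\lambda S_s}\Bigr] \;\le\; \textstyle\prod_s \E[e^{T \lambda S_s}]^{1/T} \;\le\; \exp\bigl(\lambda N (\mu + \epsilon/2) + \tfrac{1}{2} T \lambda^2 N\bigr).
\]
This is the key step: the factor $T$ gets absorbed into the sub-Gaussian variance rather than remaining as a multiplicative prefactor in the tail bound. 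A Chernoff bound followed by optimization at $\lambda^\star = \epsilon/(2T)$ then yields the claimed rate $\exp(-\epsilon^2 N / (8T))$.

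The main obstacle is precisely the dependence between subsequences, which the H\"older trick handles at the price of inflating the variance by a factor of $T$; this is what produces the denominator of $8$ (rather than $2$) after optimization. A secondary delicate point is the control of $G_1^{(s)}$ for the small values of $s$ in $\{1, \ldots, T\}$ from the hypothesis $\df(\pi_0, \pi) \le \epsilon/2$ alone, since in general $f$-discrepancy does not automatically contract along the first $T$ iterates; I expect this to be finessed either by first burning in the chain for $\Tf(\epsilon/2)$ steps, or by reading the initial-condition hypothesis as ensuring $|\E[f(X_n)] - \mu| \le \epsilon/2$ uniformly over $n \ge 0$, as the paper's preceding discussion explicitly indicates.
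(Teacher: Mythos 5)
Your proposal is correct and follows essentially the same route as the paper: thin the chain into $T_f(\epsilon/2)$ interleaved subsequences, bound each subsequence's MGF via a martingale decomposition in which the $f$-mixing time controls the conditional means to within $\epsilon/2$ and Azuma handles the bounded increments, combine the dependent subsequences by a convexity argument that absorbs the factor $T_f(\epsilon/2)$ into the sub-Gaussian variance, and optimize the Chernoff bound at $\lambda = \epsilon/(2T_f(\epsilon/2))$. The only cosmetic difference is that you combine the subsequences with generalized H\"older while the paper applies Jensen's inequality to the exponential of the average; both yield the identical exponent, and the delicate point you flag about the initial terms $G_1^{(s)}$ is handled no less carefully in your write-up than in the paper's own proof.
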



Compared to the bounds in earlier work~\cite[e.g.,][]{Leo04Hoeffding},
the bound~\eqref{eq:hoeffding-eps2} has several distinguishing
features. The primary difference is that the ``effective'' sample size
\begin{subequations}
\begin{align}
\label{EqnEffective}
N_{\eff} & \mydefn \frac{N}{\Tf(\epsilon /2)},
\end{align}
is a function of $f$, which can lead to significantly sharper bounds
on the deviations of the empirical means than the earlier uniform
bounds can deliver. Further, unlike the uniform results, we do not
require that the chain has reached equilibrium, or even approximate
equilibrium, in a total variation sense. Instead, the result applies
provided that the chain has equilibrated only approximately, and only
with respect to $f$.
  
The reader might note that if one actually has access to a
distribution $\pi_{0}$ that is $\epsilon/2$-close to $\pi$ in
$f$-discrepancy, then an estimator of $\mu$ with tail bounds similar
to those guaranteed by Theorem~\ref{thm:hoeffding-eps2} can be
obtained as follows: first, draw $N$ i.i.d.  samples from $\pi_{0}$,
and second, apply the usual Hoeffding inequality for i.i.d. variables.
However, it is essential to realize that
Theorem~\ref{thm:hoeffding-eps2} does not require that such a
$\pi_{0}$ be available to the practitioner. Instead, the theorem
statement is meant to apply in the following way: suppose
that---starting from \emph{any} initial distribution---we run an
algorithm for $N \geq T_f(\epsilon/2)$ steps, and then use the last of
$N - T_f(\epsilon/2)$ samples to form an empirical average.  Our
concentration result then holds with an effective sample size of
\begin{align}
\label{EqnEffectiveWithBurnIn}
N_{\eff}^{\mathrm{burnin}} & \mydefn \frac{N -
  T_f(\epsilon/2)}{\Tf\left(\epsilon/2\right)} ~=~
\frac{N}{\Tf\left(\epsilon/2\right)} - 1.
\end{align}
In other words, the result can be applied with an arbitrary initial
$\pi_{0}$, and accounting for burn-in merely reduces the effective
sample size by one. By contrast, such an interpretation does not
actually hold for the original result of \cite{Leo04Hoeffding}: it
requires an initial sample $X_1 \sim \pi$, but such an exact sample is
not attainable after any finite burn-in period.

The appearance of the function-specific mixing time $\Tf$ in the
bounds comes with both advantages and disadvantages. A notable
disadvantage, shared with the mixing time versions of the uniform
bounds, is that spectrum-based bounds on the mixing time (including
our $f$-specific ones) introduce a $\log\big(\frac{1}{\pimin}\big)$
term that can be a significant source of looseness. On the other hand,
obtaining rates in terms of mixing times comes with the advantage that
any bound on the mixing time translates directly into a version of the
concentration bound (with the mixing time replaced by its upper
bound). Moreover, since the $\pimin^{-1}$ term is likely to be an
artifact of the spectrum-based approach, and possibly even just of the
proof method, it may be possible to turn the mixing time based bound
into a stronger spectrum-based bound with a more sophisticated
analysis. We go part of the way toward doing this, albeit without
completely removing the $\pimin^{-1}$ term.

An analysis based on mixing time also has the virtue of better
capturing the non-asymptotic behavior of the rate. Indeed, as a
consequence of the link~\eqref{eq:Tmix-lbd-gamma} between mixing and
spectral graph (as well as matching upper bounds~\citep{Lev08Markov}),
for any fixed function $f$, there exists a function-specific
spectral-gap $\gamma_{f} > 0$ such that
\begin{align}
\label{eq:Tf-asym-gamma}
\Tf \big(\frac{\epsilon }{2}\big) \approx \frac1{\gamma_{f}} \log
\Big( \frac{1}{\epsilon } \Big) + O \big( 1 \big), \quad \mbox{for}
\quad \epsilon \ll 1.
\end{align}
\end{subequations}
These asymptotics can be used to turn our aforementioned theorem into
a variant of the results of~\citet{Leo04Hoeffding}, in which
$\gamma_{0}$ is replaced by a value $\gamma_{f}$ that (under mild
conditions) is at least as large as $\gamma_{0}$. However, as we
explore in Section~\ref{subsec:practical}, such an asymptotic
spectrum-based view loses a great deal of information needed to deal
with practical cases, where often $\gamma_{f} =\gamma_{0}$ and yet
\mbox{$\Tf(\delta) \ll T(\delta)$} even for very small values of
$\delta > 0$.  For this reason, part of our work is devoted to
deriving more fine-grained concentration inequalities that capture
this non-asymptotic behavior.

By combining our definition~\eqref{EqnEffective} of the effective
sample size $N_{\eff}$ with the asymptotic
expansion~\eqref{eq:Tf-asym-gamma}, we arrive at an intuitive
interpretation of Theorem~\ref{thm:hoeffding-eps2}: it dictates that
the effective sample size scales as $N_{\eff} ~\approx~
\frac{\gamma_{f} N }{ \log(1/\epsilon)}$ in terms of the
function-specific gap $\gamma_f$ and tolerance $\epsilon$.  This
interpretation is backed by the Hoeffding bound derived in
Corollary~\ref{cor:hoeffding-derived-eps2} and it is useful as a
simple mental model of these bounds. On the other hand, interpreting
the theorem this way effectively plugs in the asymptotic behavior of
$T_{f}$ and does not account for the non-asymptotic properties of the
mixing time; the latter may actually be more favorable and lead to
substantially smaller effective sample sizes than the naive asymptotic
interpretation predicts.  From this perspective, the master bound has
the advantage that any bound on $T_{f}$ that takes advantage of
favorable non-asymptotics translates directly into a stronger version
of the Hoeffding bound. We investigate these issues empirically in
Section~\ref{subsec:practical}.

Based on the worst-case Markov Hoeffding
bound~\eqref{eq:unif-hoeffding}, we might hope that the
\mbox{$\Tf(\frac{\epsilon}{2})$} term in
Theorem~\ref{thm:hoeffding-eps2} is spurious and removable using
improved techniques. Unfortunately, it is fundamental. This conclusion
becomes less surprising if one notes that even if we start the chain
in its stationary distribution and run it for \mbox{$N <
  \Tf(\epsilon)$} steps, it may still be the case that there is a
large set $\Omega_{0}$ such that for $i \in \Omega_0$ and $1 \leq n
\leq N$,
\begin{align}
\label{eq:lower-bd-intuition}
\left|f(X_{n}) - \mu\right| \gg \epsilon~~ \text{a.s. if}~ X_{0} = i .
\end{align}
This behavior is made possible by the fact that large positive and
negative deviations associated with different values in $\Omega_{0}$
can cancel out to ensure that $\E\left[f\left(X_{n}\right)\right] =
\mu$ marginally. However, the lower
bound~\eqref{eq:lower-bd-intuition} guarantees that
\begin{align*}
\P\left(\frac{1}{N}\sum_{n = 1}^{N} f\left(X_{n}\right) \geq \mu +
\epsilon\right) & \geq \sum_{i \in \Omega_0} \pi_{i} \cdot
\P\left(\frac{1}{N}\sum_{n = 1}^{N} f\left(X_{n}\right) \geq \mu +
\epsilon~|~X_{0} = i\right) \\ & \geq \pi\left(\Omega_0\right),
\end{align*}
so that if $\pi\left(\Omega_0\right) \gg 0$, we have no hope of
controlling the large deviation probability unless $N \gtrsim
\Tf\left(\epsilon\right)$. We make this intuitive argument precise in
Section~\ref{sec:lower}.


\subsection{Bounds on $f$-mixing times}
\label{subsec:mixing-bounds}

We generally do not have direct access either to the mixing time
$T\big(\delta\big)$ or the $f$-mixing time
$\Tf\big(\delta\big)$. Fortunately, any bound on $\Tf$ translates
directly into a variant of the tail bound~\eqref{eq:hoeffding-eps2}.
Accordingly, this section is devoted to methods for bounding these
quantities. Since mixing time bounds are equivalent to bounds on
$d_{\TV}$ and $d_{f}$, we frame the results in terms of distances
rather than times.  These results can then be inverted in order to
obtain mixing-time bounds in applications.

The simplest bound is simply a uniform bound on total variation
distance, which also yields a bound on the $f$-discrepancy.  In
particular, if the chain is started with distribution $\pi_0$, then we
have
\begin{align}
\label{eq:mix-TV}
d_{\TV}\big(\pi_{n},~\pi\big) \leq \frac{1}{\sqrt{\pimin}} \cdot
\abslambda^{n} \cdot d_{\TV}\big(\pi_{0},~\pi\big).
\end{align}
In order to improve upon this bound, we need to develop
function-specific notions of spectrum and spectral gaps.  The simplest
way to do this is simply to consider the (left) eigenvectors to which
the function is not orthogonal and define a spectral gap restricted
only to the corresponding eigenvectors.

\begin{defn}[$f$-eigenvalues and spectral gaps] 
For a function \mbox{$f \colon [d] \rightarrow \R$,} we define
\begin{subequations}
\begin{align}
J_{f} & \mydefn \Big \{ j \in [d] \, \mid \, \lambda_{j} \neq 1
~\text{and}~ q_{j}^{T}f \neq 0 \Big \},
\end{align}
where $q_{j}$ denotes a left eigenvector associated with
$\lambda_{j}$.  Similarly, we define
\begin{align}
\lambda_{f} = \max_{j \in J_{f}} \big|\lambda_{j}\big|, \quad
\mbox{and} \quad \gamma_{f} = 1 - \lambda_{f}.
\end{align}
\end{subequations}
\end{defn}

Using this notation, it is straightforward to show that if the chain
is started with the distribution $\pi_{0}$, then
\begin{align}
\label{eq:mix-gap}
 d_{f} \big( \pi_{n},~\pi\big) \leq \sqrt{ \frac{\E_{\pi}\big[f^2
       \big]}{\pimin}} \cdot \lambda_{f}^{n} \cdot
 d_{f}\big(\pi_{0},~\pi\big) .
\end{align}
This bound, though useful in many cases, is also rather brittle: it
requires $f$ to be exactly orthogonal to the eigenfunctions of the
transition matrix.  For example, a function $f_{0}$ with a good value
of $\lambda_{f}$ can be perturbed by an arbitrarily small amount in a
way that makes the resulting perturbed function $f_{1}$ have
$\lambda_{f} = \lambda_{\ast}$. More broadly, the bound is of little
value for functions with a small but nonzero inner product with the
eigenfunctions corresponding to large eigenvalues (which is likely to
occur in practice; cf.\ Section \ref{subsec:practical}), or in
scenarios where $f$ lacks symmetry (cf.\ the random function example
in Section~\ref{subsec:example-cycle}).

In order to address these issues, we now derive a more fine-grained
bound on $\df$. The basic idea is to split the lower $f$-spectrum
$J_{f}$ into a ``bad'' piece $J$, whose eigenvalues are close to $1$
but whose eigenvectors are approximately orthogonal to $f$, and a
``good'' piece $J_{f} \setminus J$, whose eigenvalues are far from $1$
and which therefore do not require control on the inner products of
their eigenvectors with $f$.  More precisely, for a given set $J
\subset J_{f}$, let us define
\begin{align*}
 \Delta_{J}^{\ast} \mydefn 2 \big |J \big| \times \max_{j \in J} \|
 h_{j} \|_{\infty} \times \max_{j \in J} \big | q_{j}^{T} f \big|, &
 \qquad \lambda_{J} \mydefn \max \Big \{ \big|\lambda_{j}\big| \, \mid
 \, j \in J \Big \}, \quad \mbox{and} \\
\lambda_{-J} \mydefn \max \Big \{ \big|\lambda_{j}\big| \; \mid \; j
\in J_{f} \setminus J \Big \}.
\end{align*}
We obtain the following bound, expressed in terms of $\lambda_{-J}$
and $\lambda_{J}$, which we generally expect to obey the relation $1 -
\lambda_{-J} \ll 1 - \lambda_{J}$.

\begin{lem}[Sharper $f$-discrepancy bound]
Given $f \colon [d] \rightarrow [0,~1]$ and a subset $J \subset
J_{f}$, we have
\begin{align}
\df \big(\pi_{n},~\pi \big ) & \leq \Delta^{\ast}_J \: \lambda_{J}^{n}
\cdot d_{\TV}(\pi_0,~\pi) + \sqrt{\frac{\E_{\pi}\big [ f^2 \big]
  }{\pimin}} \cdot \lambda_{-J}^{n} \, \df( \pi_{0}, ~\pi).
\end{align}
\label{lem:mix-gap-all}
\end{lem}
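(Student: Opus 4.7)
The plan is to leverage the spectral decomposition of the reversible transition matrix $P$ and split the contributions of the eigenmodes into the ``bad'' set $J$ and the ``good'' set $J_f \setminus J$, bounding each piece by a different technique.

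First, I would set up the spectral machinery. Because $P$ is reversible with respect to $\pi$, the right eigenvectors $\{h_j\}$ can be chosen orthonormal with respect to the $\pi$-weighted inner product $\langle u, v\rangle_{\pi} \mydefn \sum_x \pi(x) u(x) v(x)$, and the left eigenvectors satisfy $q_j(x) = \pi(x) h_j(x)$. The principal eigenvector is $h_1 = \mathbf{1}$ with $\lambda_1 = 1$. Writing $a_j \mydefn (\pi_0 - \pi)^T h_j$ and $b_j \mydefn q_j^T f$, and observing that the $j = 1$ mode drops out because $\pi_n^T \mathbf{1} = \pi^T \mathbf{1}$ while the modes with $q_j^T f = 0$ contribute nothing, I would obtain the spectral identity
\begin{align*}
(\pi_n - \pi)^T f \;=\; \sum_{j \in J_f} \lambda_j^n \, a_j \, b_j,
\end{align*}
which reduces the problem to bounding a scalar sum indexed by $J_f$.

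Next, I would split this sum over $J$ versus $J_f \setminus J$ and apply the triangle inequality. For the $J$-piece, the strategy is a term-wise worst-case bound: $|\lambda_j|^n \leq \lambda_J^n$ for $j \in J$; then $|a_j| \leq \|h_j\|_{\infty} \cdot \|\pi_0 - \pi\|_1 = 2\|h_j\|_{\infty} \cdot d_{\TV}(\pi_0, \pi)$ by the $\ell_1/\ell_\infty$ duality; and $|b_j| \leq \max_{j \in J} |q_j^T f|$. Summing $|J|$ such terms produces exactly $\Delta_J^{\ast} \cdot \lambda_J^n \cdot d_{\TV}(\pi_0, \pi)$, matching the first half of the claimed bound.

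The $(J_f \setminus J)$-piece is handled by a different technique: I would mimic the Cauchy--Schwarz argument underlying the uniform bound \eqref{eq:mix-gap}, but restrict attention to the ``good'' eigenmodes so that the eigenvalue factor emerges as $\lambda_{-J}^n$ rather than $\lambda_f^n$. The function-side factor is controlled by Parseval in the $\pi$-weighted basis, giving $\sum_j b_j^2 \leq \E_{\pi}[f^2]$; the distribution-side factor uses $\sum_j a_j^2 = \chi^2(\pi_0 \,\|\, \pi) \leq 1/\pimin$, which supplies the $1/\sqrt{\pimin}$. The remaining $\df(\pi_0, \pi)$ factor is to be produced by arranging the Cauchy--Schwarz step so that the initial $f$-discrepancy is the active quantity, exactly parallel to the proof of \eqref{eq:mix-gap}.

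The main obstacle will be this last step: it is straightforward to write down a version of the bound with $\sqrt{\chi^2(\pi_0 \,\|\, \pi)}$ (or $d_{\TV}(\pi_0, \pi)$) in place of $\df(\pi_0, \pi)$, but isolating $\df(\pi_0, \pi)$ cleanly requires exploiting orthogonality of the $J$-modes and $(J_f \setminus J)$-modes in the $\pi$-weighted basis, so that the component of $\pi_0 - \pi$ responsible for the $J$-contribution---which is incompatible with a $\df$ factor---has already been absorbed into the first half of the bound. Once that orthogonality has been set up, the remainder of the argument is a direct reversible-chain spectral estimate.
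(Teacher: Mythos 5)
Your plan reproduces the paper's proof essentially step for step: write $\df(\pi_n,\pi)=|(\pi_0-\pi)^{T}\tilde{P}_f^{n}f|$ with $\tilde{P}_f=\sum_{j\in J_f}\lambda_j h_j q_j^T$, split $\tilde{P}_f^n=P_J^n+P_{-J}^n$, bound the $J$-piece termwise via the $\ell_1$/$\ell_\infty$ pairing $\|\pi_0-\pi\|_1\,\|h_J(n)\|_\infty\le 2d_{\TV}(\pi_0,\pi)\sum_{j\in J}|\lambda_j|^n|q_j^Tf|\,\|h_j\|_\infty\le\Delta_J^\ast\lambda_J^n\,d_{\TV}(\pi_0,\pi)$, and bound the complement by the Cauchy--Schwarz/operator-norm argument behind~\eqref{eq:mix-gap} restricted to the modes in $J_f\setminus J$. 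The first term therefore comes out exactly as you describe, and the overall architecture matches the paper's.

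The obstacle you flag at the end---extracting the factor $\df(\pi_0,\pi)$ rather than $\sqrt{\chi^2(\pi_0\,\|\,\pi)}$ in the second term---is a genuine gap, and you should be aware that the paper does not close it either. Its proof of Lemma~\ref{lem:mix-gap-all} simply invokes ``the argument from the proof of equation~\eqref{eq:mix-gap},'' and that argument, as written, is Cauchy--Schwarz in the $\pi$-weighted basis: it yields $\opnorm{\tilde{A}_f}^n\,\|\Dpi f\|_2\,\|\Dpi^{-1}(\pi_0-\pi)\|_2\le\lambda_f^n\sqrt{\E_\pi[f^2]/\pimin}$, after which the factor $\df(\pi_0,\pi)$ is appended without justification. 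Your proposed remedy (orthogonality of the $J$ and $J_f\setminus J$ components) will not produce that factor either, and in fact the factored form cannot hold verbatim: writing $a_j=(\pi_0-\pi)^Th_j$ and $b_j=q_j^Tf$, the condition $\df(\pi_0,\pi)=0$ only forces $\sum_j a_jb_j=0$, not $\sum_j\lambda_j^n a_jb_j=0$, so the $n$-step discrepancy need not vanish when the initial one does. The honest fix is to state and prove the lemma with $\df(\pi_0,\pi)$ replaced by $1$ (equivalently, drop that factor); this weaker form is all that is used downstream, since Corollary~\ref{cor:hoeffding-derived-Jf} and the examples immediately bound $\df(\pi_0,\pi)\le 1$ anyway. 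Modulo this blemish, which your writeup correctly isolates and which is shared with the paper, your argument is complete.
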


The above bound, while easy to apply and comparatively easy to
estimate, can be loose when the first term is a poor
estimate of the part of the discrepancy that comes from the $J$ part
of the spectrum. We can get a still sharper estimate by instead 
making use of the following vector quantity that 
more precisely summarizes the interactions between $f$ and $J$:
\begin{align*}
h_{J}\big(n\big) \mydefn \sum_{j \in J} \big(q_{j}^{T}f \cdot
\lambda_{j}^{n}\big) h_{j}.
\end{align*}
This quantity leads to what we refer to as an \emph{oracle adaptive
  bound}, because it uses the exact value of the part of the
discrepancy coming from the $J$ eigenspaces, while using the same
bound as above for the part of the discrepancy coming from
$J_{f}\setminus J$.

\begin{lem}[Oracle $f$-discrepancy bound]
\label{lem:mix-gap-all-oracle}
Given \mbox{$f \colon [d] \rightarrow [0,~1]$} and a subset \mbox{$J
  \subset J_{f}$,} we have
\begin{align}
\df \big(\pi_{n},~\pi \big) & \leq \big|\big(\pi_0 -\pi\big)^{T} h_{J}
\big (n \big ) \big| + \sqrt{\frac{\E_{\pi}\big [f^2 \big] }{\pimin}}
\cdot \lambda_{-J}^{n} \cdot \df \big( \pi_{0}, ~\pi \big).
\end{align}
\end{lem}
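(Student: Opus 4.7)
The plan is to prove the bound by spectral decomposition: expand $f$ in the $L^2(\pi)$-orthonormal basis of right eigenvectors of $P$, then split the resulting sum into contributions from $J$ and from $J_f \setminus J$. The $J$-part will collapse exactly to $(\pi_0 - \pi)^T h_J(n)$ by the definition of $h_J(n)$; the $J_f \setminus J$-part will be bounded by Cauchy--Schwarz in the natural weighted inner product.

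First I would set up spectral notation using reversibility: take right eigenvectors $\{h_j\}$ of $P$ orthonormal in $L^2(\pi)$, with $h_1 \equiv \mathbf{1}$ and corresponding left eigenvectors $q_j = D h_j$ where $D = \diag(\pi)$. Expanding $f = \sum_j (q_j^T f) h_j$ and applying $P^n$ yields $P^n f - \mu\mathbf{1} = \sum_{j \in J_f} \lambda_j^n (q_j^T f) h_j$, once the $j=1$ term (which equals $\mu\mathbf{1}$) and the terms with $q_j^T f = 0$ have been removed. Since $\pi P = \pi$ and $(\pi_0 - \pi)\mathbf{1} = 0$, this produces the master identity
\begin{align*}
\pi_n f - \pi f \;=\; (\pi_0 - \pi)\bigl(P^n f - \mu\mathbf{1}\bigr) \;=\; \sum_{j \in J_f} \lambda_j^n (q_j^T f) (\pi_0 - \pi)^T h_j.
\end{align*}

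Next I split this sum along $J$. The $J$-part is exactly $(\pi_0 - \pi)^T h_J(n)$ by definition of $h_J(n)$, supplying the first term of the bound. For the remaining $J_f \setminus J$ sum, write it as $(\pi_0 - \pi)^T g$ with $g := \sum_{j \in J_f \setminus J} \lambda_j^n (q_j^T f) h_j$, and apply a Cauchy--Schwarz inequality weighted by $\pi$: $\bigl|(\pi_0 - \pi)^T g\bigr|^2 \leq \chi^2(\pi_0\|\pi) \cdot \|g\|_\pi^2$, where $\chi^2(\pi_0\|\pi) = \sum_i (\pi_0(i)-\pi_i)^2/\pi_i \leq 1/\pimin$. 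Orthonormality of $\{h_j\}$ in $L^2(\pi)$ gives $\|g\|_\pi^2 = \sum_{j \in J_f \setminus J} \lambda_j^{2n}(q_j^T f)^2 \leq \lambda_{-J}^{2n}\, \E_\pi[f^2]$, so this step produces the factor $\sqrt{\E_\pi[f^2]/\pimin}\cdot \lambda_{-J}^n$.

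The hard part will be the extra multiplicative factor $\df(\pi_0, \pi)$ appearing in the stated bound. The Cauchy--Schwarz above yields $\sqrt{\E_\pi[f^2]/\pimin}\cdot \lambda_{-J}^n$ but not the product with $\df(\pi_0,\pi)$, and the two need not be comparable in general. To recover the stated factor, I would try decomposing $\pi_0 - \pi$ along and orthogonal to $f$ so that the component along $f$ contributes $\df(\pi_0,\pi)$ exactly, while the orthogonal component interacts weakly (via the smallness of $\lambda_{-J}^n$) with the restricted eigenspace; alternatively, one can exploit self-adjointness of $P$ on $L^2(\pi)$ to symmetrize the expression by transferring powers of $P$ from $f$ onto $\pi_0 - \pi$ before Cauchy--Schwarz, an arrangement that in some cases introduces the missing $\df(\pi_0,\pi)$ factor. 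Beyond that refinement, the argument is routine spectral calculation.
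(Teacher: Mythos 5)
Your argument is the paper's argument: the paper writes $\df(\pi_n,\pi) = |(\pi_0-\pi)^T\tilde{P}_f^n f|$ with $\tilde{P}_f = \sum_{j\in J_f}\lambda_j h_j q_j^T$, uses $\tilde{P}_f^n = P_J^n + P_{-J}^n$ (your split of the eigenexpansion along $J$), identifies $P_J^n f = h_J(n)$ exactly, and bounds the $P_{-J}^n$ term by the operator-norm/Cauchy--Schwarz step from the proof of the single-rate bound~\eqref{eq:mix-gap} --- precisely your weighted Cauchy--Schwarz plus Parseval, including the bound $\sum_i (\pi_{0,i}-\pi_i)^2/\pi_i \leq 1/\pimin$. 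Every step you actually execute is correct and matches the paper.

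The one place you stop --- the trailing factor $\df(\pi_0,\pi)$ --- is not a gap in your reasoning but a defect in the statement, and you should not pursue the symmetrization ideas in your last paragraph. The paper obtains this factor only by appeal to the proof of~\eqref{eq:mix-gap}, and there it enters through an unjustified equality: the quantity $\opnorm{\tilde{A}_f}^n \, \| \Dpi f\|_2 \, \|\Dpi^{-1}(\pi_0-\pi)\|_2$ is rewritten as $\sqrt{\E_\pi[f^2]\sum_i(\pi_{0,i}-\pi_i)^2/\pi_i}\cdot\lambda_f^n\,\df(\pi_0,\pi)$, with the factor $\df(\pi_0,\pi)$ appearing on the right with no counterpart on the left. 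Moreover the factor cannot be correct in general: writing $a_j = (q_j^Tf)(\pi_0-\pi)^Th_j$, one has $\df(\pi_0,\pi) = |\sum_{j\in J_f} a_j|$, which can vanish by cancellation for some $\pi_0\neq\pi$ (i.e., $\E_{\pi_0}[f]=\mu$) even though the individual $a_j$ are nonzero; taking $J_f=\{2,3\}$, $J=\{2\}$, $a_2=-a_3\neq 0$ and $\lambda_3<0$, the claimed bound at $n=1$ reads $|a_2|\,|\lambda_2-\lambda_3| \leq |a_2|\,\lambda_2$, which is false. The provable statement is exactly the one your Cauchy--Schwarz step delivers, with $\df(\pi_0,\pi)$ replaced by $\sqrt{\pimin\sum_i(\pi_{0,i}-\pi_i)^2/\pi_i}\leq 1$; since the paper's applications replace all $\pi_0$-dependent quantities by their worst-case values anyway, this weaker form is the one actually used downstream. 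So treat your proof as complete for the corrected statement rather than incomplete for the stated one.
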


We emphasize that, although Lemma \ref{lem:mix-gap-all-oracle} is
stated in terms of the initial distribution $\pi_{0}$, when we apply
the bound in the real examples we consider, we replace all
quantities that depend on $\pi_0$ by their worst cases values, in
order to avoid dependence on initialization; this results in a $\|
h_{J} \big ( n \big) \|_{\infty}$ term instead of the dot product in
the lemma.


\subsection{Concentration bounds}
\label{subsec:derived-hoeffding}

The mixing time bounds from Section~\ref{subsec:mixing-bounds} allow
us to translate the master Hoeffding bound into a weaker but more
interpretable---and in some instances, more directly
applicable---concentration bound. The first result we prove along
these lines applies meaningfully only to functions $f$ whose absolute
$f$-spectral gap $\gamma_{f}$ is larger than the absolute spectral gap
$\gamma_{\ast}$. It is a direct consequence of the master Hoeffding
bound and the simple spectral mixing bound \eqref{eq:mix-gap}, and it
delivers the asymptotics in $N$ and $\epsilon$ promised in
Section~\ref{subsec:master-hoeffding}.

\begin{cor}
\label{cor:hoeffding-derived-eps2}
Given any $\epsilon > 0$ such that $d_{f} \big( \pi_{0},~\pi\big) \leq
\frac{\epsilon}{2}$ and $N \geq \Tf \big (\frac{\epsilon}{2}\big)$, we
have
\begin{align*}
\P \left[ \frac{1}{N} \sum_{n = 1}^{N} f(X_n) \geq \mu + \epsilon
  \right] & \leq
\begin{cases}
 \exp \left ( - \frac{\epsilon^2}{8} \: \frac{\gamma_{f} N}{ \log
   \big(\frac{2}{\epsilon \sqrt{\pimin}} \big) } \right ) &
 \text{ if } \epsilon \leq \frac{2\lambda_f}{\sqrt{\pimin}},\\
\exp \left ( - \frac{\epsilon^2  N}{8} \right ) & \text{ otherwise.}
\end{cases}
\end{align*}
\end{cor}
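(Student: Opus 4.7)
The plan is to combine Theorem~\ref{thm:hoeffding-eps2} with the spectral $f$-mixing bound \eqref{eq:mix-gap} and then case-split on whether $\epsilon$ is small enough for the spectral estimate on $\Tf(\epsilon/2)$ to be informative. Under the stated hypotheses, Theorem~\ref{thm:hoeffding-eps2} applies and gives
\[
\P\Big[\tfrac{1}{N}\sum_{n=1}^{N} f(X_n) \geq \mu + \epsilon\Big] \leq \exp\Big\{-\frac{\epsilon^2 N}{8\,\Tf(\epsilon/2)}\Big\},
\]
so the entire problem reduces to producing a sufficiently tight upper bound on $\Tf(\epsilon/2)$ in terms of $\gamma_f$ and $\pimin$.

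To control $\Tf(\epsilon/2)$, I would apply \eqref{eq:mix-gap} with the initial distribution taken to be the point mass $\pi_0 = e_i$ at each starting state $i \in [d]$. Since $f$ takes values in $[0,1]$, we have both $\E_\pi[f^2]\leq 1$ and $d_f(e_i,\pi)=|f(i)-\mu|\leq 1$, so the bound collapses to
\[
\max_{i\in [d]} d_f(\pi_n^{(i)},\pi) \;\leq\; \lambda_f^{n}/\sqrt{\pimin}.
\]
The right-hand side drops below $\epsilon/2$ once $n \log(1/\lambda_f) \geq \log(2/(\epsilon\sqrt{\pimin}))$. Using the elementary inequality $\log(1/\lambda_f) \geq 1-\lambda_f = \gamma_f$, any integer $n$ with $n \geq \log(2/(\epsilon\sqrt{\pimin}))/\gamma_f$ suffices, giving
\[
\Tf(\epsilon/2) \;\leq\; \frac{\log(2/(\epsilon\sqrt{\pimin}))}{\gamma_f}
\]
whenever the right-hand side is positive.

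The case split in the corollary now corresponds precisely to when this spectral estimate is meaningful. When $\epsilon \leq 2\lambda_f/\sqrt{\pimin}$, the argument of the logarithm is at least $1/\lambda_f \geq 1$, the bound on $\Tf$ above is positive, and substituting it into the exponent from Theorem~\ref{thm:hoeffding-eps2} yields exactly the first branch. When instead $\epsilon > 2\lambda_f/\sqrt{\pimin}$, the same crude bound already gives $\max_i d_f(\pi_1^{(i)},\pi) \leq \lambda_f/\sqrt{\pimin} < \epsilon/2$ at $n=1$, so $\Tf(\epsilon/2)=1$ and Theorem~\ref{thm:hoeffding-eps2} delivers $\exp(-\epsilon^2 N/8)$, matching the second branch. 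There is no real conceptual obstacle here: the only subtleties are keeping the worst case over starting states $i$ visible when invoking \eqref{eq:mix-gap}, and ensuring that the manipulation $\log(1/\lambda_f) \geq \gamma_f$ is used only in the regime where the $\log$ in the denominator is positive, so that the case split is clean.
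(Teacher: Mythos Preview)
Your proposal is correct and follows essentially the same route as the paper: both apply Theorem~\ref{thm:hoeffding-eps2} and then bound $\Tf(\epsilon/2)$ via the spectral estimate~\eqref{eq:mix-gap}, splitting on whether $\epsilon \leq 2\lambda_f/\sqrt{\pimin}$ so that the second case forces $\Tf(\epsilon/2)=1$. Your write-up is in fact slightly more explicit than the paper's, since you spell out the worst-case over starting states and the step $\log(1/\lambda_f) \geq \gamma_f$ needed to pass from $\log(1/\lambda_f)$ to $\gamma_f$ in the exponent.
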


Deriving a Hoeffding bound using the sharper $f$-mixing bound given in
Lemma~\ref{lem:mix-gap-all} requires more care, both because of the
added complexity of managing two terms in the bound and because one of
those terms does not decay, meaning that the bound only holds for
sufficiently large deviations $\epsilon > 0$.

The following result represents one way of articulating the bound
implied by Lemma~\ref{lem:mix-gap-all}; it leads to improvements over
the previous two results when the contribution from the bad part of
the spectrum $J$---that is, the part of the spectrum that brings
$\gamma_{f}$ closer to $1$ than we would like---is negligible at the
scale of interest. Recall that Lemma~\ref{lem:mix-gap-all} expresses
the contribution of $J$ via the quantity $\Delta_{J}^{\ast}$.

\begin{cor} 
\label{cor:hoeffding-derived-Jf}
Given a triple of positive numbers $(\Delta, \Delta_J,
\Delta_{J}^\ast)$ such that $\Delta_{J} \geq \Delta_{J}^{\ast}$ and $N
\geq \Tf \big(\Delta_{J} + \Delta\big)$, we have
\begin{align}
\P \left[ \frac{1}{N}\sum_{n = 1}^{N} f\big(X_n\big) \geq \mu + 
  2\left(\Delta_{J} + \Delta\right)\right] & \leq
\begin{cases}        
   \exp\big(- \frac{\big(\Delta_{J} + \Delta\big)^{2}}{2} \:
   \frac{\big(1 - \lambda_{-J}\big)N}{ \log\big(\frac{1}{\Delta
       \sqrt{\pimin}}\big) }\big) \text{ if } \Delta \leq
   \frac{\lambda_{-J}}{\sqrt{\pimin}}, \\ 
\exp \big(-\frac{\big(\Delta_{J} + \Delta\big)^{2}N}{2}\big) \text{ if }
\Delta > \frac{\lambda_{-J}}{\sqrt{\pimin}}.
 \end{cases}
\end{align}
\end{cor}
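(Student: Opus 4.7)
The plan is to apply the master Hoeffding bound (Theorem~\ref{thm:hoeffding-eps2}) with the choice $\epsilon = 2(\Delta_{J} + \Delta)$, after using Lemma~\ref{lem:mix-gap-all} to produce a quantitative upper bound on $\Tf(\Delta_{J} + \Delta)$. With this choice, the exponent in Theorem~\ref{thm:hoeffding-eps2} simplifies to
\[
-\frac{\epsilon^{2} N}{8 \, \Tf(\epsilon/2)} \;=\; -\frac{(\Delta_{J} + \Delta)^{2} N}{2 \, \Tf(\Delta_{J} + \Delta)},
\]
which already gives the $(\Delta_{J}+\Delta)^{2}/2$ prefactor appearing in both branches. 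All that remains is to upper-bound $\Tf(\Delta_{J} + \Delta)$.

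To do this, I would invoke Lemma~\ref{lem:mix-gap-all} and split the overall budget $\Delta_{J} + \Delta$ between its two terms, assigning $\Delta_{J}$ to the first and $\Delta$ to the second. The first term $\Delta_{J}^{\ast} \lambda_{J}^{n} \, d_{\TV}(\pi_{0},\pi)$ is bounded \emph{uniformly} in $n$ by $\Delta_{J}^{\ast} \leq \Delta_{J}$, using the trivial estimates $\lambda_{J}^{n} \leq 1$ and $d_{\TV} \leq 1$ together with the standing hypothesis $\Delta_{J} \geq \Delta_{J}^{\ast}$. The second term $\sqrt{\E_{\pi}[f^{2}]/\pimin} \cdot \lambda_{-J}^{n} \cdot \df(\pi_{0},\pi)$ is bounded by $\lambda_{-J}^{n}/\sqrt{\pimin}$, since $f \in [0,1]$ implies $\E_{\pi}[f^{2}] \leq 1$ and $\df \leq 1$. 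Thus controlling $\Tf$ reduces to finding the smallest $n$ with $\lambda_{-J}^{n}/\sqrt{\pimin} \leq \Delta$.

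The two branches of the conclusion correspond to the two regimes of this inequality. If $\Delta > \lambda_{-J}/\sqrt{\pimin}$, then $n = 1$ already works, so $\Tf(\Delta_{J} + \Delta) \leq 1$ and the master exponent collapses to $-(\Delta_{J}+\Delta)^{2}N/2$, matching the second branch. If instead $\Delta \leq \lambda_{-J}/\sqrt{\pimin}$, solving for $n$ gives $n \geq \log(1/(\Delta\sqrt{\pimin}))/\log(1/\lambda_{-J})$, and the elementary inequality $\log(1/x) \geq 1 - x$ on $(0,1]$ produces the cleaner bound $\Tf(\Delta_{J} + \Delta) \leq \log(1/(\Delta\sqrt{\pimin}))/(1-\lambda_{-J})$. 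Substituting into the master exponent recovers the first branch exactly.

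There is no genuinely hard step here; the proof is essentially a bookkeeping exercise that inverts Lemma~\ref{lem:mix-gap-all} and composes with Theorem~\ref{thm:hoeffding-eps2}. The one subtlety worth flagging is that Theorem~\ref{thm:hoeffding-eps2} also demands $\df(\pi_{0},\pi) \leq \epsilon/2 = \Delta_{J} + \Delta$; this is either imposed as a condition on the initial distribution or, following the discussion after Theorem~\ref{thm:hoeffding-eps2}, arranged by treating the first $\Tf(\Delta_{J} + \Delta)$ samples as burn-in, which merely shaves one unit off the effective sample size $N_{\eff}^{\mathrm{burnin}}$ defined in~\eqref{EqnEffectiveWithBurnIn}.
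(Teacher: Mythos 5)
Your proof is correct and follows essentially the same route as the paper: apply Theorem~\ref{thm:hoeffding-eps2} with $\epsilon = 2(\Delta_J + \Delta)$, and bound $\Tf(\Delta_J+\Delta)$ via Lemma~\ref{lem:mix-gap-all} using the trivial estimates $\lambda_J^n \le 1$, $d_{\TV}\le 1$, $\df \le 1$, $\E_\pi[f^2]\le 1$, splitting the budget as $\Delta_J + \Delta$ across the two terms. The only cosmetic difference is that you make explicit the step $\log(1/\lambda_{-J}) \ge 1-\lambda_{-J}$ that converts the mixing-time bound into the $(1-\lambda_{-J})$ form appearing in the first branch, which the paper leaves implicit.
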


Similar arguments can be applied to combine the master Hoeffding
bounds with the oracle $f$-mixing bound
Lemma~\ref{lem:mix-gap-all-oracle}, but we omit the corresponding
result for the sake of brevity. The proofs for both aforementioned corollaries are in Section \ref{subsec:proofs-conc-derived}.


\subsection{Example: Lazy random walk on $C_{2d}$}
\label{subsec:example-cycle}

In order to illustrate the mixing time and Hoeffding bounds from
Section~\ref{subsec:mixing-bounds}, we analyze their predictions for
various classes of functions on the $2d$-cycle $C_{2d}$, identified
with the integers modulo $2d$. In particular, consider the Markov
chain corresponding to a lazy random walk on $C_{2d}$; it has
transition matrix
\begin{align}
\label{eq:P-C2d}
P_{uv} = \begin{cases} \frac{1}2 &~ \text{if}~ v = u, \\ \frac{1}{4}
  &~ \text{if}~ v = u + 1 \mod{2d}, \\ \frac{1}{4} &~ \text{if}~ v = u
  - 1 \mod{2d}, \\ 0 &~ \text{otherwise.}
\end{cases}
\end{align}

It is easy to see that the chain is irreducible, aperiodic, and
reversible, and its stationary distribution is uniform. It can be
shown~\citep{Lev08Markov} that its mixing time scales proportionally
\mbox{to $d^2$.} However, as we now show, several interesting classes
of functions mix much faster, and in fact, a ``typical'' function,
meaning a randomly chosen one, mixes much faster than the naive mixing
bound would predict.

\paragraph{Parity function.} The epitome of a rapidly mixing function is the parity function:
\begin{align}
\parity (u)) \mydefn \begin{cases} 
1 &~ \text{if}~ u~ \text{is odd}, \\
0 &~ \text{otherwise.} 
\end{cases}
\end{align}
It is easy to see that no matter what the choice of initial
distribution $\pi_0$ is, we have $\E\big[\parity(X_1)\big] = \frac{1}2
$, and thus $\parity$ mixes in a single step.

\paragraph{Periodic functions.} A more general class of examples arises from considering the
eigenfunctions of $P$, which are given by $g_{j}\big(u\big) =
\cos\big(\frac{\pi j u}{d}\big)$; \citep[see, e.g.,][]{Lev08Markov}.
We define a class of functions of varying regularity by setting
\begin{align*}
f_{j} = \frac{1 + g_{j}}2 , \quad \mbox{for each $j = 0, 1, \ldots,
  d$.}
\end{align*}
Here we have limited $j$ to $0 \leq j \leq d$ because $f_{j}$ and $f_{2d -
  j}$ behave analogously.  Note that the parity function $\parity$
corresponds to $f_d$.

Intuitively, one might expect that some of these functions mix well
before $d^2$ steps have elapsed---both because the vectors $\{f_j, \;
j \neq 1 \}$ are orthogonal to the non-top eigenvectors with
eigenvalues close to $1$ and because as $j$ gets larger, the periods
of $f_{j}$ become smaller and smaller, meaning that their global
behavior can increasingly be well determined by looking at local
snapshots, which can be seen in few steps.

Our mixing bounds allow us to make this intuition precise, and our
Hoeffding bounds allow us to prove correspondingly improved
concentration bounds for the estimation of $\mu =
\E_{\pi}\big[f_{j}\big] = 1/2$. Indeed, we have
\begin{align}
\label{eq:gamma-trig}
\gamma_{f_j} = \frac{1 - \cos\big(\frac{\pi j}{d}\big)}2 
\geq 
\begin{cases} \frac{\pi^2 j^2 }{24d^2 } & \text{if}~ j \leq
  \frac{d}2 , \\
\frac{1}2  & \text{if}~ \frac{d}2  < j \leq d.
\end{cases}
\end{align}
Consequently, equation~\eqref{eq:mix-gap} predicts that
\begin{align}
\label{eq:Tf-trig}
T_{f_j} \big(\delta\big) \leq \tilde{T}_{f_j}\big(\delta\big) =
\begin{cases}
\frac{24}{\pi^2 } \big[\frac{1}2 \log 2d+ \log \big(\frac{1}{\delta}
  \big) \big] \cdot \frac{d^2 }{j^2 } &~ \text{if}~ j \leq
\frac{d}2 , \\
\log 2d + 2 \log\big
(\frac{1}{\delta}\big) &~ \text{if}~ \frac{d}2 < j \leq d,
\end{cases} 
\end{align}
where we have used the trivial bound $\E_{\pi}\big[f^2 \big] \leq 1$
to simplify the inequalities. Note that this yields an improvement
over $\asymp d^2 $ for $j \gtrsim \log{d}$. Moreover, the
bound~\eqref{eq:Tf-trig} can itself be improved, since each $f_{j}$ is
orthogonal to all eigenfunctions other than $\ones$ and $g_{j}$, so
that the $\log{d}$ factors can all be removed by a more carefully
argued form of Lemma~\ref{lem:mix-gap-all}. It thus follows directly
from the bound~\eqref{eq:gamma-trig} that if we draw $N +
\tilde{T}_{f_j}\big(\frac{\epsilon}{2}\big)$ samples, we obtain the
tail bound
\begin{align}
\P \Big[ \frac{1}{N_0} \sum_{n = N_{\mathrm{b}}}^{N +
    N_{\mathrm{b}}} f_{j}\big(X_n\big) \geq \frac{1}2  +
  \epsilon \Big] & \leq
\begin{cases}
\exp \big(-\frac{3 d^2 }{\pi^2  j^2 } \cdot \frac{\epsilon^2  N}{
  \log\big(2\sqrt{2d}/\epsilon\big)}\big) &~ \text{if}~ j \leq
\frac{d}2 , \\
\exp \big(-\frac{\epsilon^2 N}{16 \log \big(
    2\sqrt{2d}/\epsilon \big)} \big) &~ \frac{d}2 < j
\leq d ,
\end{cases}
\end{align}
where the burn-in time is given by $N_{\mathrm{b}} =
\tilde{T}_{f_j}\big(\epsilon / 2\big)$.  Note again that the
sharper analysis mentioned above would allow us to remove the
$\log{2d}$ factors.

\paragraph{Random functions.} 
A more interesting example comes from considering a randomly chosen
function $f \colon C_{2d} \rightarrow [0,~1]$. Indeed, suppose that
the function values are sampled iid from some distribution $\nu$ on
$[0,~1]$ whose mean $\mu^{\ast}$ is $1/2$:
\begin{align}\label{eq:f-random}
\{ f(u), ~u \in C_{2d} \} ~ \overset{\text{iid}}{\sim}  ~ \nu . 
\end{align}
We can then show that for any fixed $\delta^{\ast} > 0$, with high
probability over the randomness of $f$, have
\begin{align}
\label{eq:Tf-random}
T_{f} (\delta) & \lesssim \frac{d \log{d} \big[\log{d} + \log \big(
    \frac{1}{\delta} \big) \big]}{\delta^2 }, \qquad \mbox{for all
  $\delta \in (0, \delta^\ast]$.}
\end{align}
For $\delta \gg \frac{\log{d}}{\sqrt{d}}$, this scaling is an
improvement over the global mixing time of order $d^2 \log(1/\delta)$.

The core idea behind the proof of equation~\eqref{eq:Tf-random} is to
apply Lemma~\ref{lem:mix-gap-all} with
\begin{align}
\label{eq:J-random}
J_{\delta} & \mydefn \left\{ j \in \Nat \cap [1, 2d-1] \; \mid \; j
\leq 4 \delta \sqrt{\frac{d}{\log{d}}}~~ \text{or} ~~ j \geq 2d - 4
\delta \sqrt{\frac{d}{\log{d}}} \right\} .
\end{align}
It can be shown that $\| h_{j}\|_{\infty} = 1$ for all $0 \leq j < 2d$
and that with high probability over $f$, $|q_{j}^{T}f| \lesssim
\sqrt{\frac{\log{d}}{d}}$ simultaneously for all $j \in J_{\delta}$,
which suffices to reduce the first part of the sharper $f$-discrepancy
bound to order $\delta$.

In order to estimate the rate of concentration, we proceed as
follows. Taking $\delta = c_{0} \epsilon$ for a suitably chosen
universal constant $c_{0} > 0$, we show that $\Delta_{J} \mydefn
\frac{\epsilon}{4} \geq \Delta_{J}^{\ast}$. We can then set $\Delta =
\frac{\epsilon}{4}$ and observe that with high probability over $f$,
the deviation in Corollary~\ref{cor:hoeffding-derived-Jf} satisfies
the bound $ 2\left(\Delta_{J} + \Delta\right) \leq \epsilon$.  With
$\delta$ as above, we have $1 - \lambda_{-J} \geq
\frac{c_{1}\epsilon^{2}}{d\log{d}}$ for another universal constant
$c_{1} > 0$. Thus, if we are given $N + \Tf\big(\epsilon/2\big)$
samples for some $N \geq \Tf\big(\frac{\epsilon}{2}\big)$, then we
have
\begin{align}
\label{eq:conc-random}
\P \left[ \frac{1}{N} \sum_{n = \Tf(\epsilon/2)}^{N + \Tf (
    \epsilon/2)} f(X_n) \geq \mu + \epsilon \right] \leq \exp
\left \{ -\frac{c_2 \epsilon^{4}N}{d \log{d }\big[ \log \big(
    \frac{4}{\epsilon} \big) + \log{2d} \big ] } \right \},
\end{align}
for some $c_2 > 0$.  Consequently, it suffices for the sample size to be
lower bounded by
\begin{align*}
N \gtrsim \frac{d \log{d}\big[\log\big(1/\epsilon\big) +
    \log{d}\big]}{\epsilon^{4}},
\end{align*}
in order to achieve an estimation accuracy of $\epsilon$.  Notice that
this requirement is an improvement over the $\frac{d^2 }{\epsilon^2 }$
from the uniform Hoeffding bound provided that $\epsilon \gg (\frac{
  \log^2 {d}} {d})^{1/2}$.  Proofs of all these claims can be found in
Appendix~\ref{app:proof-example-cycle}.


\subsection{Lower bounds}
\label{sec:lower}

Let us now make precise the intuitive argument set forth at the end of
Section~\ref{subsec:master-hoeffding}.  The basic idea is to start
with an arbitrary candidate function $\delta : \left(0,~1\right)
\rightarrow \left(0,~1\right)$ such that
$\Tf\left(\frac{\epsilon}{2}\right)$ in the denominator of the
function-specific Hoeffding bound \eqref{eq:hoeffding-eps2} can be
replaced by $\Tf\left(\delta\left(\epsilon\right)\right)$ and show
that if $\delta\left(\epsilon\right) \geq \epsilon$, the replacement
is not actually possible. We prove this fact by constructing a Markov
chain (which is independent of $\epsilon$) and a function (which
depends on both $\epsilon$ and $\delta$) such that the Hoeffding bound
is violated for the Markov chain-function pair for some value of $N$
(which in general depends on the chain and $\epsilon$).

As the following precise result shows, our lower bound continues to
hold for an arbitrary constant in the exponent of the Hoeffding bound,
meaning that Theorem~\ref{thm:hoeffding-eps2} is optimal up to
constants. We give the proof in Section~\ref{subsec:proofs-lower}.

\begin{prop}
\label{prop:lower-bound}
For every constant $c_{1} > 0$ and $\epsilon \in (0,1)$, there exists
a Markov chain $P_{c_1}$, a number of steps $N = N(c_1,\epsilon)$ and
a function $f = f_{\epsilon}$ such that
\begin{align}
\P_{\pi}\left(\left|\frac{1}{N} \sum_{n = 1}^{N} f(X_{n}) -
\frac{1}{2}\right| \geq \epsilon\right) > 2 \cdot \exp \left(
-\frac{c_{1} N \epsilon^{2}}{\Tf \left (\delta
  (\epsilon)\right)}\right).
\end{align}
\end{prop}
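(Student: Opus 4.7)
The plan is to make the intuition around equation~\eqref{eq:lower-bd-intuition} precise by exhibiting a two-state Markov chain and a range-limited function for which the putatively sharper Hoeffding bound fails. The underlying idea is that a function whose values lie in $[1/2 - \epsilon, 1/2 + \epsilon]$ trivially mixes to within $\epsilon$ in $f$-discrepancy after a single step, so that $\Tf(\delta(\epsilon))$ collapses to $1$ whenever $\delta(\epsilon) \geq \epsilon$; meanwhile, a slowly mixing chain can keep the state trapped in one of the two values long enough that the empirical average deviates by $\epsilon$ with probability well above the i.i.d.-style bound appearing on the right-hand side of the proposition.

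Concretely, I would take $P_{c_1}$ to be the two-state chain on $\{0, 1\}$ with flipping probability $p \mydefn 1 - e^{-c_1 \epsilon^2 / 2}$ and stay probability $1 - p$, whose stationary distribution is uniform, and I would set $f_\epsilon(0) \mydefn 1/2 - \epsilon$ and $f_\epsilon(1) \mydefn 1/2 + \epsilon$, so that $\mu = 1/2$. A one-line calculation gives $d_{f_\epsilon}(\pi_n^{(i)}, \pi) = \epsilon\,|1 - 2p|^n$ for each $i \in \{0, 1\}$, whence $\Tf(\delta(\epsilon)) = 1$ as soon as $\delta(\epsilon) \geq \epsilon$. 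Writing $k_1$ for the number of visits to state $1$ among $X_1, \ldots, X_N$, the identity $\bar{f}_N = 1/2 + \epsilon(2 k_1 / N - 1)$ shows that the event $\{\bar{f}_N \geq 1/2 + \epsilon\}$ is exactly $\{X_1 = \cdots = X_N = 1\}$, so conditioning on $\{X_0 = 1\}$ yields
\[
\P_\pi\!\left(\bar{f}_N \geq 1/2 + \epsilon\right) \;\geq\; \P_\pi(X_0 = X_1 = \cdots = X_N = 1) \;=\; \tfrac{1}{2}(1-p)^N \;=\; \tfrac{1}{2}\, e^{-N c_1 \epsilon^2 / 2}.
\]

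To close the argument, I would set $N \mydefn \lceil 4 \log 4 / (c_1 \epsilon^2) \rceil$, so that the right-hand side of the claimed bound equals $2 e^{-c_1 N \epsilon^2}$ and the required inequality $\tfrac{1}{2} e^{-N c_1 \epsilon^2 / 2} > 2 e^{-c_1 N \epsilon^2}$ reduces to $c_1 N \epsilon^2 / 2 > \log 4$, which holds strictly by construction. Passing from the one-sided to the two-sided tail is free.

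The only real obstacle is calibrating $p$ and $N$ simultaneously: too large a $p$ makes $(1 - p)^N$ decay faster than the target bound, while too small a $p$ requires implausibly large $N$. The symmetric choice $p = 1 - e^{-c_1 \epsilon^2 / 2}$ places the decay rates of the two sides in exactly the right ratio so that the final inequality becomes a one-line verification. I would expect that strengthening the hypothesis on $\delta(\cdot)$, or engineering a transition matrix that is genuinely independent of $\epsilon$ (for instance, by embedding the two-state gadget as a block inside a larger chain), would require only cosmetic adjustments to this construction.
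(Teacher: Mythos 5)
Your construction is correct, but it takes a genuinely different route from the paper's. Both proofs share the same first move: choose $f$ with values in $[\tfrac12-\delta(\epsilon),\tfrac12+\delta(\epsilon)]$ so that $\Tf(\delta(\epsilon))=1$ trivially, which collapses the right-hand side to $2e^{-c_1N\epsilon^2}$. Where you diverge is in lower-bounding the deviation probability. The paper uses a lazy random walk on a line graph with $2d$ states ($f=\tfrac12-\delta$ on one half, $\tfrac12+\delta$ on the other): starting from the outer quarters of the line---a set of stationary mass at least $\tfrac13$---the walker cannot cross the midpoint in fewer than $d/2$ steps, so the deviation event holds \emph{almost surely} for all $N<d/2$, giving a constant lower bound $\tfrac13$ on the left side; one then picks $N=\lceil\log 7/(c_1\epsilon^2)\rceil$ (and $d>2N$) so the right side drops below $\tfrac27$. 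Your two-state chain instead makes the deviation event $\{X_1=\cdots=X_N=1\}$ exponentially rare, but calibrates the hold probability $1-p=e^{-c_1\epsilon^2/2}$ so that it decays at exactly half the exponent of the target bound, and then takes $N$ large enough that the factor-of-two mismatch in rates overwhelms the prefactors. Your version is more elementary (explicit two-state spectrum, one-line verification) and makes transparent that the failure is a rate mismatch rather than a constant-probability trapping phenomenon; the paper's version produces a constant-probability violation and hews closer to the intuition of equation~\eqref{eq:lower-bd-intuition}. Two small caveats on your side, neither fatal: (i) your transition matrix depends on $\epsilon$ through $p$, whereas the proposition's notation $P_{c_1}$ and the surrounding prose advertise an $\epsilon$-independent chain---though the paper's own instantiation also lets the chain size depend on $\epsilon$ through $c_0$, so the literal existential statement is satisfied either way; (ii) your deviation event is the thin event that the chain never leaves state $1$, so the inequality $\bar f_N\geq\tfrac12+\epsilon$ is attained only with equality---this is fine as written since the proposition uses a non-strict inequality inside the probability, but it leaves no slack if one wanted a strict deviation.
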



\section{Statistical applications}
\label{sec:stat-apps}

We now consider how our results apply to Markov chain Monte Carlo
(MCMC) in various statistical settings.  Our investigation proceeds
along three connected avenues.  We begin by showing, in
Section~\ref{subsec:confidence}, how our concentration bounds can be
used to provide confidence intervals for stationary expectations that
avoid the over-optimism of pure CLT predictions without incurring the
prohibitive penalty of the Berry-Esseen correction---or the global
mixing rate penalty associated with spectral-gap-based confidence
intervals. Then, in Section~\ref{subsec:testing}, we show how our results
allow us to improve on recent sequential hypothesis testing
methodologies for MCMC, again replacing the dependence on the spectral
gap by a dependence on the $f$-mixing time. Later, in
Section~\ref{subsec:practical}, we illustrate the practical
significance of function-specific mixing properties by using our
framework to analyze three real-world instances of MCMC, basing both
the models and datasets chosen on real examples from the literature.


\subsection{Confidence intervals for posterior expectations}
\label{subsec:confidence}

In many applications, a point estimate of $\E_{\pi}\big[f\big]$ does
not suffice; the uncertainty in the estimate must be quantified, for
instance by providing $(1-\alpha)$ confidence intervals for some pre-specified constant $\alpha$.  In this section, we
discuss how improved concentration bounds can be used to obtain
sharper confidence intervals. In all cases, we assume the Markov chain
is started from some distribution $\pi_0$ that need not be the
stationary distribution, meaning that the confidence intervals must
account for the burn-in time required to get close to equilibrium.

We first consider a bound that is an immediate consequence of the
uniform Hoeffding bound given by~\cite{Leo04Hoeffding}. As one would
expect, it gives contraction at the usual Hoeffding rate but with an
effective sample size of $N_{\eff} \approx \gamma_{0} (N - T_{0})$,
where $T_{0}$ is the tuneable burn-in parameter. Note that this means
that no matter how small $T_{f}$ is compared to the global mixing time
$T$, the effective size incurs the penalty for a global burn-in and
the effective sample size is determined by the global spectral
parameter $\gamma_{0}$.  In order to make this precise, for a fixed
burn-in level $\alpha_0 \in (0, \alpha)$, define
\begin{subequations}
\begin{align}
\epsilon_{N}( \alpha,~\alpha_{0}) & \mydefn \sqrt{2\big(2 -
  \gamma_{0}\big)} \cdot \sqrt{\frac{\log\big(2/\big[\alpha -
      \alpha_0\big]\big)}{\gamma_{0}\big[N -
      T\big(\alpha_{0}\big)\big]}}.
\end{align}
Then the uniform Markov Hoeffding bound~\cite[Thm. 1]{Leo04Hoeffding}
implies that the set
\begin{align}
\label{eq:confidence-unif-hoeffding}
\UNIFINT \big( \alpha,~\alpha_{0} \big) = \left[ \frac{1}{N - T \big(
    \alpha_0/2 \big)} \sum_{n = T \big( \alpha_{0}/ 2 \big) + 1}^{N} f
  \big( X_{n} \big) \pm \epsilon_{N} \big( \alpha, ~\alpha_{0}\big)
  \right]
\end{align}
\end{subequations}
is a $1-\alpha$ confidence interval.  Full details of the proof are
given in Appendix~\ref{appsec:confidence-unif-hoeffding}.

Moreover, given that we have a family of confidence intervals---one
for each choice of $\alpha_0 \in (0, \alpha)$---we can obtain the
sharpest confidence interval by computing the infimum
$\epsilon_{N}^{\ast}\big(\alpha\big) \mydefn \inf \limits_{0 <
  \alpha_{0} < \alpha} \epsilon_{N}\big(\alpha,~\alpha_{0}\big)$.
Equation~\eqref{eq:confidence-unif-hoeffding} then implies that
\begin{align*}
\UNIFINT\big(\alpha\big) = \Big[\frac{1}{N - T
    \big(\alpha_0\big)}\sum_{n = T(\alpha_{0}/2) + 1}^{N} f (X_{n})
  \pm \epsilon_{N}^{\ast}( \alpha) \Big]
\end{align*}
is a $1 - \alpha$ confidence interval for $\mu$.

We now consider one particular application of our Hoeffding bounds to
confidence intervals, and find that the resulting interval adapts to
the function, both in terms of burn-in time required, which now falls
from a global mixing time to an $f$-specific mixing time, and in terms
of rate, which falls from $\frac{1}{\gamma_{0}}$ to $T_{f}(\delta)$
for an appropriately chosen $\delta > 0$.  We first note that the
one-sided tail bound of Theorem~\ref{thm:hoeffding-eps2} can be
written as $e^{-r_{N}(\epsilon)/8}$, where
\begin{align}
\label{eq:rN-def}
r_{N} (\epsilon) & \mydefn \epsilon^2 \left [ \frac{N}{T_{f} \big(
  \frac{\epsilon}{2} \big)} - 1 \right ].
\end{align}
If we wish for each tail to have probability mass that is at most
$\alpha/2$, we need to choose $\epsilon > 0$ so that $r_{N} \big (
\epsilon \big) \geq 8 \log\frac2 {\alpha}$, and conversely any such
$\epsilon$ corresponds to a valid two-sided $\big(1 - \alpha\big)$
confidence interval.  Let us summarize our conclusions:
\begin{thm}
\label{thm:confidence-adaptive-hoeffding} 
For any width $\epsilon_{N} \in r_{N}^{-1}\big(\big[8
  \log\big(2/\alpha\big),~\infty\big)\big)$, the set
\begin{align*}
\FUNCINT & \mydefn \left[ \frac{1}{N - T_{f}\big(\frac{\epsilon
    }{2} \big)} \sum_{n = T_{f} \big( \frac{\epsilon}{2}
    \big)}^{N} f\big(X_{n}\big) \pm \epsilon_{N} \right]
\end{align*}
is a $1 - \alpha$ confidence interval for the mean $\mu =
\E_{\pi}\big[f\big]$.
\end{thm}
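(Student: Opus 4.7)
The plan is to reduce Theorem~\ref{thm:confidence-adaptive-hoeffding} to a two-sided version of the master Hoeffding bound (Theorem~\ref{thm:hoeffding-eps2}), exploiting the Markov property to ``restart'' the chain after a burn-in of length $T_f(\epsilon_N/2)$ and then applying a union bound. The critical observation that drives this is the identity $r_N(\epsilon_N)/8 = \epsilon_N^2 \big[N - T_f(\epsilon_N/2)\big]\big/\big(8\,T_f(\epsilon_N/2)\big)$, which matches the exponent one would obtain from Theorem~\ref{thm:hoeffding-eps2} with $N - T_f(\epsilon_N/2)$ samples drawn after burn-in.

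First, regardless of how the chain was initialized, the definition of $T_f$ guarantees that the distribution $\pi_0'$ of $X_{T_f(\epsilon_N/2)}$ satisfies $d_f(\pi_0', \pi) \leq \epsilon_N/2$. By the strong Markov property, the process $\{X_n\}_{n \geq T_f(\epsilon_N/2)}$ is itself a Markov chain with transition matrix $P$ and initial distribution $\pi_0'$, so it falls within the scope of Theorem~\ref{thm:hoeffding-eps2}. Applied to this restarted chain with $N' \mydefn N - T_f(\epsilon_N/2)$ samples, the theorem yields
\begin{align*}
\P \left[ \frac{1}{N'} \sum_{n=T_f(\epsilon_N/2)+1}^{N} f(X_n) \geq \mu + \epsilon_N \right] & \leq \exp\!\left( -\frac{\epsilon_N^2 \, N'}{8\, T_f(\epsilon_N/2)} \right) = \exp\!\left( -\frac{r_N(\epsilon_N)}{8} \right).
\end{align*}
The hypothesis $r_N(\epsilon_N) \geq 8\log(2/\alpha)$ forces $N' \geq T_f(\epsilon_N/2)$ for any non-trivial $\alpha < 1$, which validates the sample-size requirement of Theorem~\ref{thm:hoeffding-eps2}.

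Next, I would handle the lower tail by applying the same argument to the function $\tilde f \mydefn 1 - f$, whose $f$-mixing time, $f$-discrepancy, and stationary mean satisfy $T_{\tilde f}(\delta) = T_f(\delta)$, $d_{\tilde f} = d_f$, and $\E_\pi[\tilde f] = 1 - \mu$. This gives an identical bound on the probability that the empirical mean undershoots $\mu$ by more than $\epsilon_N$. A union bound over the two tails then produces
\begin{align*}
\P\!\left[ \left| \frac{1}{N'}\sum_{n=T_f(\epsilon_N/2)+1}^{N} f(X_n) - \mu \right| \geq \epsilon_N \right] \leq 2\exp\!\left(-\frac{r_N(\epsilon_N)}{8}\right) \leq 2 \exp\!\left(-\log\frac{2}{\alpha}\right) = \alpha,
\end{align*}
which is the desired coverage statement.

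The only technical point requiring care is the first step---ensuring that after the deterministic burn-in of $T_f(\epsilon_N/2)$ steps, the resulting chain can be treated as a ``fresh'' Markov chain with an initial distribution that is $(\epsilon_N/2)$-close in $f$-discrepancy to $\pi$, uniformly over the unknown original initialization. This is immediate from the Markov property and the worst-case-over-starting-state definition of $T_f$, but it is the one place where the interplay between burn-in and the $f$-specific notion of closeness has to be invoked explicitly; everything else is bookkeeping around the formula for $r_N$.
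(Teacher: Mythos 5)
Your proposal is correct and follows essentially the same route as the paper: both apply Theorem~\ref{thm:hoeffding-eps2} to the chain restarted after a burn-in of $T_f(\epsilon_N/2)$ steps (so that the new initial distribution is $\epsilon_N/2$-close in $f$-discrepancy), identify the resulting exponent with $r_N(\epsilon_N)/8$, and union-bound the two tails to get total mass $\alpha$. The only cosmetic difference is that the paper proves a slightly more general statement with an upper bound $\tilde{T}_f \geq T_f$ and specializes, and handles the lower tail by symmetry rather than by explicitly passing to $1-f$; neither changes the substance.
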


In order to make the result more amenable to interpretation, first
note that for any $0 < \eta < 1$, we have
\begin{align}
\label{eq:rN-eta-def}
r_{N}\big(\epsilon\big) \geq \underbrace{\epsilon^2
  \left[\frac{N}{T_{f}\big(\frac{\eta}{2}\big)} - 1\right]}_{
  r_{N,\eta}(\epsilon)} \quad \mbox{valid for all $\epsilon \geq
  \eta$.}
\end{align}
Consequently, whenever $r_{N,\eta}(\epsilon_{N}) \geq 8 \log\frac2
{\alpha}$ and $\epsilon_{N} \geq \eta$, we are guaranteed that a
symmetric interval of half-width $\epsilon_{N}$ is a valid $\big(1 -
\alpha\big)$-confidence interval.  Summarizing more precisely, we
have:
\begin{cor}
\label{cor:confidence-adaptive-hoeffding-concrete} 
Fix $\eta > 0$ and let
\begin{align*}
\epsilon_{N} = r_{N,\eta}^{-1}\big(8 \log \frac2 {\alpha}\big) & = 2\sqrt{2}
\sqrt{\frac{T_{f}\big(\frac{\eta}{2}\big) \cdot \log\big (2/
    \alpha \big)}{N - T_{f} \big( \frac{\eta}{2} \big)}} .
\end{align*}
If $N \geq \Tf\big(\frac{\eta}{2}\big)$, then $\FUNCINT$ is a $1 -
\alpha$ confidence interval for $\mu = \E_{\pi}\big[f\big]$.
\end{cor}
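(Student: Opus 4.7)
The plan is to obtain the corollary as a direct specialization of Theorem~\ref{thm:confidence-adaptive-hoeffding}, using the elementary comparison inequality \eqref{eq:rN-eta-def} to replace the implicit condition $r_N(\epsilon_N) \geq 8\log(2/\alpha)$ by an explicit quadratic inequality in $\epsilon$ that can be solved in closed form. Theorem~\ref{thm:confidence-adaptive-hoeffding} already guarantees a valid $(1-\alpha)$ interval $\FUNCINT$ for any half-width $\epsilon_N$ satisfying $r_N(\epsilon_N) \geq 8\log(2/\alpha)$, so the entire task reduces to exhibiting one such $\epsilon_N$ in closed form.

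The key step is to bypass the function $r_N$, which is typically unavailable in closed form because $T_f(\cdot)$ is not, by using the surrogate $r_{N,\eta}$. Inequality~\eqref{eq:rN-eta-def} gives $r_N(\epsilon) \geq r_{N,\eta}(\epsilon) = \epsilon^2 \bigl[N/T_f(\eta/2) - 1\bigr]$ for all $\epsilon \geq \eta$. Since the surrogate depends on $\epsilon$ only through the explicit prefactor, the equation $r_{N,\eta}(\epsilon_N) = 8\log(2/\alpha)$ inverts directly to
\[
\epsilon_N \;=\; \sqrt{\frac{8\log(2/\alpha)\,T_f(\eta/2)}{N - T_f(\eta/2)}} \;=\; 2\sqrt{2}\sqrt{\frac{T_f(\eta/2)\log(2/\alpha)}{N - T_f(\eta/2)}},
\]
which matches the expression in the corollary. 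The hypothesis $N \geq T_f(\eta/2)$ is precisely what makes $r_{N,\eta}$ positive and strictly increasing on $(0, \infty)$, hence invertible.

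The conclusion then follows from the chain $r_N(\epsilon_N) \geq r_{N,\eta}(\epsilon_N) = 8\log(2/\alpha)$, after which Theorem~\ref{thm:confidence-adaptive-hoeffding} delivers $\FUNCINT$ as a $(1-\alpha)$ confidence interval for $\mu$. The only point requiring care is the side condition $\epsilon_N \geq \eta$ needed to invoke the comparison between $r_N$ and $r_{N,\eta}$, since $T_f$ being non-increasing is what transfers the inequality on thresholds to an inequality on the rate functions. This condition is not automatic from the closed-form expression for $\epsilon_N$; it should either be imposed implicitly by how $\eta$ is calibrated (in practice, $\eta$ is chosen near the target width, so $\epsilon_N \geq \eta$ holds by construction) or stated explicitly as part of the regime of applicability. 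Beyond this bookkeeping detail, the argument is purely algebraic.
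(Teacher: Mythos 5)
Your proposal is correct and follows essentially the same route as the paper: invert the surrogate rate $r_{N,\eta}$ in closed form, use the monotonicity of $T_f$ together with \eqref{eq:rN-eta-def} to conclude $r_N(\epsilon_N) \geq r_{N,\eta}(\epsilon_N) = 8\log(2/\alpha)$, and then invoke Theorem~\ref{thm:confidence-adaptive-hoeffding}. The side condition $\epsilon_N \geq \eta$ that you flag is indeed the one delicate point; the paper's proof (of the generalized Proposition~\ref{prop:confidence-adaptive-hoeffding-concrete-bdd}) likewise simply declares it to hold ``by assumption,'' so your explicit acknowledgment that it must be imposed is, if anything, slightly more careful than the original.
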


Often, we do not have direct access to $T_{f}\big(\delta\big)$, but we
can often obtain an upper bound $\tilde{T}_{f}\big(\delta\big)$ that
is valid for all $\delta > 0$. In
Section~\ref{subsec:proofs-confidence}, therefore, which contains the
proofs for this section, we prove a strengthened form of Theorem
\ref{thm:confidence-adaptive-hoeffding} and its corollary in that
setting.

A popular alternative strategy for building confidence intervals using
MCMC depends on the Markov central limit theorem
(e.g.,~\citep{Fle08Trust,Jon01Honest,Gly09BatchMeans,Rob05MCMC}). If
the Markov CLT held exactly, it would lead to appealingly simple
confidence intervals of width
\begin{align*}
\tilde{\epsilon}_{N} = \sigfasym \: \sqrt{ \frac{\log (2/\alpha)}{N}},
\end{align*}
where $\sigfasym^2 \mydefn \lim_{N \rightarrow \infty} \frac{1}{N}
\Var_{X_{0} \sim \pi} \big [ \sum_{n = 1}^{N} f \big( X_{n} \big)
  \big]$ is the asymptotic variance of $f$.

Unfortunately, the CLT does not hold exactly, even after the burn-in
period. The amount by which it fails to hold can be quantified using a
Berry-Esseen bound for Markov chains, as we now discuss. Let us adopt
the compact notation
$\tilde{S}_{N} = \sum_{n = 1}^{N} \big[ f \big( X_{n} \big) - \mu
  \big].$
We then have the bound~\citep{Lez01Berry}
\begin{align}
\label{eq:markov-clt-be}
\big| \P \big( \frac{ \tilde{S}_{N}}{\sigfasym \sqrt{N}} \leq s \big)
- \Phi \big(s\big)\big| \leq \frac{e^{- \gamma_{0} N } }{3
  \sqrt{\pimin}} + \frac{13}{\sigfasym \sqrt{\pimin}} \cdot
\frac{1}{\gamma_{0}\sqrt{N}},
\end{align}
where $\Phi$ is the standard normal CDF.  Note that this bound
accounts for both the non-stationarity error and for non-normality
error at stationarity.  The former decays rapidly at the rate $e^{-\gamma_{0}N}$, while the latter decays far more slowly, at the rate
$\frac{1}{\gamma_{0}\sqrt{N}}$.

While the bound~\eqref{eq:markov-clt-be} makes it possible to prove a
corrected CLT confidence interval, the resulting bound has two
significant drawbacks. The first is that it only holds for extremely
large sample sizes, on the order of $\frac{1}{\pimin\gamma_{0}^2 }$,
compared to the order $\frac{\log\big(1/\pimin\big)}{\gamma_{0}}$
required by the uniform Hoeffding bound. The second, shared by the
uniform Hoeffding bound, is that it is non-adaptive and therefore
bottlenecked by the global mixing properties of the chain.  For
instance, if the sample size is bounded below as
\begin{align*}
N \geq \max\big(\frac{1}{\gamma_{0}}\log\big(\frac2 {\sqrt{\pimin}\alpha}\big) 
 ~, ~\frac{1}{\gamma_{0}^2 }\frac{6084}{\sigfasym^2 \pimin \alpha^2 } \big),
\end{align*}
then both terms of equation~\eqref{eq:confidence-unif-hoeffding} are
bounded by $1/6$, and the confidence intervals take the form
\begin{align}
\label{eq:confidence-clt-be} 
\BEINT = \left[\frac{1}{N}\sum_{n = 1}^{N} f\big(X_{n}\big) \pm
  \sigfasym \sqrt{\frac{2 \log \big (6 / \alpha \big)} {N}} \right].
\end{align}
See Appendix~\ref{appsec:confidence-clt-be} for the justification of
this claim.

It is important to note that the width of this confidence interval
involves a hidden form of mixing penalty. Indeed, defining the
variance $\sigf^2 = \Var_{\pi}\big[f\big(X\big)\big]$ and $\rho_{f}
\mydefn \frac{\sigf^2 }{\sigfasym^2 }$, we can rewrite the width as
\begin{align*}
\epsilon_{N} & = \sigf \sqrt{ \frac{2 \log \big ( 6 /\alpha \big)}{
    \rho_{f} N} }.
\end{align*}
Thus, for this bound, the quantity $\rho_{f}$ captures the penalty due
to non-independence, playing the role of $\gamma_{0}$ and $\gamma_{f}$
in the other bounds. In this sense, the CLT bound adapts to the
function $f$, but only when it applies, which is at a sample-size
scale dictated by the global mixing properties of the chain (i.e.,
$\gamma_{0}$).



\subsection{Sequential testing for MCMC}
\label{subsec:testing}

For some applications, full confidence intervals may be unnecessary;
instead, a practitioner may merely want to know whether $\mu =
\E_{\pi}[f]$ lies above or below some threshold $0 < r < 1$. In these
cases, we would like to develop a procedure for distinguishing between
the two possibilities, at a given tolerable level $0 < \alpha < 1$ of
combined Type I and II error. The simplest approach is, of course, to
choose $N$ so large that the $1 - \alpha$ confidence interval built
from $N$ MCMC samples lies entirely on one side of $r$, but it may be
possible to do better by using a sequential test. This latter idea was
recently investigated in \citet{Gyo15Test}, and we consider the same
problem settings that they did:
\begin{enumerate}[leftmargin=*]
\item[(a)] Testing with (known) indifference region, involving a
  choice between
\begin{align*}
H_{0} & \colon \mu \geq r + \delta \\
H_{1} & \colon \mu \leq  r - \delta;
\end{align*}
\item[(b)] Testing with no indifference region---that is, the same as
  above but with $\delta = 0$.
\end{enumerate}

For the first setting (a), we always assume $0 < \delta < \nu \mydefn \min(\mu,1-\mu)$, and the
algorithm is evaluated on its ability to correctly choose between
$H_{0}$ and $H_{1}$ when one of them holds, but it incurs no penalty
for either choice when $\mu$ falls in the indifference region $\big(r
- \delta,~r + \delta\big)$. The error of a procedure $\alg$ can thus
be defined as
\begin{align*}
\err\big(\alg,~f\big) = \begin{cases}
  \P\big(\alg\big(X_{1:\infty}\big) = H_{1}\big) & ~ \text{if}~ \mu
  \in H_{0}, \\ \P\big(\alg\big(X_{1:\infty}\big) = H_{0}\big) & ~
  \text{if}~ \mu \in H_{1}, \\ 0 &~ \text{otherwise.}
\end{cases} 
\end{align*}

The rest of this subsection is organized as follows. For the first
setting (a), we analyze a procedure $\algfix$ that makes a decision
after a fixed number $N := N(\alpha)$ of samples. We also analyze a
sequential procedure $\algseq$ that chooses whether to reject
at a sequence $N_0,\ldots,N_k,\ldots$ of decision times. For the
second, more challenging, setting (b), we analyze $\algdiff$, which also
rejects at a sequence of decision times. For both $\algseq$ and
$\algdiff$, we calculate the expected stopping times of the
procedures.

As mentioned above, the simplest procedure $\algfix$ would choose a
fixed number $N$ of samples to be collected based on the target level
$\alpha$. After collecting $N$ samples, it forms the empirical average
$\hat{\mu}_{N} = \frac{1}{N}\sum_{n = 1}^{N} f\big(X_{n}\big)$ and
outputs $H_{0}$ if $\hat{\mu}_{N} \geq r + \delta$, $H_{1}$ if
$\hat{\mu}_{N} \leq r - \delta$, and outputs a special indifference
symbol, say $\mathrm{I}$, otherwise.

The sequential algorithm $\algseq$ makes decisions as to whether to
output one of the hypotheses or continue testing at a fixed sequence
of decision times, say $N_{k}$.  These times are defined recursively
by
\begin{align}
N_{0} & = \big\lfloor M \cdot \min\big(\frac{1}{r},~\frac{1}{1 -
  r}\big) \big\rfloor, \label{eq:seq-seq-N0} \\ N_{k} & = \big\lfloor
N_{0}\big(1 + \xi\big)^{k} \big\rfloor \label{eq:seq-seq-Nk},
\end{align}
where $M > 0$ and $0 < \xi < 2/5$ are parameters of the algorithm. At
each time $N_{k}$ for $k \geq 1$, the algorithm $\algseq$ checks if
\begin{align}
\label{eq:seq-indiff-stopping}
\hat{\mu}_{N_k} \in \big(r - \frac{M}{N_k}, r + \frac{M}{N_k} \big) .
\end{align}
If the empirical average lies in this interval, then the algorithm
continues sampling; otherwise, it outputs $H_0$ or $H_1$ accordingly
in the natural way.

For the sequential algorithm $\algdiff$, let $N_{0} > 0$ be chosen
arbitrarily,\footnote{In \citet{Gyo15Test}, the authors set $N_{0}
  = \big\lfloor \frac{100}{\gamma_{0}}\big\rfloor$, but this is
  inessential.} and let $N_{k}$ be defined in terms of $N_{0}$ as in
\eqref{eq:seq-seq-Nk}. It once again decides at each $N_{k}$ for $k
\geq 1$ whether to output an answer or to continue sampling, depending
on whether
\begin{align*}
\hat{\mu}_{N_k} \in \big(r - \epsilon_{k}\big(\alpha\big),~r +
\epsilon_{k}\big(\alpha\big)\big) .
\end{align*}
When this inclusion holds, the algorithm continues; when it doesn't
hold, the algorithm outputs $H_0$ or $H_1$ in the natural way.
The following result is restricted to the stationary case; later in
the section, we turn to the question of burn-in.  
\begin{thm}
\label{thm:seq-err-all-adapt} 
Assume that $\alpha \leq \frac2 {5}$.  For $\algfix, \algseq,
\algdiff$ to all satisfy $ \err\big(\alg,~f\big) ~\leq~ \alpha $, it
suffices to (respectively) choose
\begin{align}
 N &= \frac{2\Tf\big(\delta\big)
   \log\big(\frac{1}{\alpha}\big)}{\delta^2 }, \\ M &=
 \frac{8\Tf\big(\frac{\delta}2\big) \log\big(\frac2
   {\sqrt{\alpha\xi}}\big)}{\delta},~ \text{and} \\
\label{eq:seq-seq-diff-adapt-eps2-epsk}
\epsilon_{k}\big(\alpha\big) & = \inf \Big \{ \epsilon > 0 \colon
\frac{\epsilon^2 }{8\Tf\big(\frac{\epsilon}{2}\big)} \geq
\frac{ \log \big(1 / \alpha \big) + 1 + 2 \log{k}}{N_{k}} \Big \},
\end{align}
where we let $\inf \emptyset = \infty$.
\end{thm}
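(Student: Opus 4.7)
The plan is to reduce every error analysis to a union bound over decision times combined with the master Hoeffding inequality (Theorem~\ref{thm:hoeffding-eps2}). Since we are in the stationary case we have $\df(\pi_0,\pi)=0$, so the only precondition to verify before invoking Theorem~\ref{thm:hoeffding-eps2} at sample size $N$ and deviation $\epsilon$ is $N \geq \Tf(\epsilon/2)$. In every case, the ``wrong'' hypothesis is output only when $\hat\mu$ deviates from $\mu$ by at least a quantity $\epsilon$ determined by the decision threshold and the indifference gap, so the proof amounts to picking the right $\epsilon$ at each checkpoint.

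For $\algfix$, under $H_0$ the algorithm errs only when $\hat\mu_N \leq r-\delta$, which forces $\hat\mu_N - \mu \leq -2\delta$ since $\mu \geq r+\delta$. Applying Theorem~\ref{thm:hoeffding-eps2} with $\epsilon = 2\delta$ bounds this probability by $\exp(-\delta^2 N/(2\Tf(\delta)))$, so the choice $N = 2\Tf(\delta)\log(1/\alpha)/\delta^2$ yields error at most $\alpha$; the argument under $H_1$ is symmetric, and the precondition $N \geq \Tf(\delta)$ follows from $\alpha \leq 2/5$ together with $\delta \leq 1$.

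For $\algdiff$, the implicit definition of $\epsilon_k(\alpha)$ is engineered precisely so that each per-checkpoint Hoeffding bound is $\exp(-\log(1/\alpha) - 1 - 2\log k) = \alpha/(e k^{2})$; a union bound over $k \geq 1$ then yields total error at most $(\alpha/e)(\pi^2/6) < \alpha$, using $\pi^2/(6e) < 1$. The finiteness of the infimum defining $\epsilon_k(\alpha)$ follows because $\epsilon^2/(8\Tf(\epsilon/2))$ grows unboundedly as $\epsilon \to 1$ while the right-hand side $(\log(1/\alpha) + 1 + 2\log k)/N_k$ is finite.

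For $\algseq$, erring under $H_0$ requires $\hat\mu_{N_k} \leq r - M/N_k$ at some checkpoint, hence a deviation of at least $\epsilon_k := \delta + M/N_k$. Applying Theorem~\ref{thm:hoeffding-eps2} at each $k$ (using $\Tf(\epsilon_k/2) \leq \Tf(\delta/2)$ since $\epsilon_k \geq \delta$ and $\Tf$ is non-increasing) and expanding $(\delta N_k + M)^2/N_k \geq 2\delta M + \delta^2 N_k$ gives, after plugging in the chosen $M$, a per-$k$ bound of $(\alpha\xi/4)\exp(-\delta^2 N_k/(8\Tf(\delta/2)))$. Since $N_k = N_0(1+\xi)^k$ grows geometrically with $N_0 \asymp M$, the resulting sum is a rapidly converging series whose total mass is shown to be at most $\alpha$ using the assumptions $\alpha \leq 2/5$ and $\xi < 2/5$. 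This summability is the main obstacle of the proof: in the crossover regime $N_k \approx M/\delta$ the elementary bound $\epsilon_k^2 N_k \geq 2\delta M + \delta^2 N_k$ is weakest, so a sharper use of $(a+b)^2/n \geq 4ab$ together with a count of the $O(1/\log(1+\xi))$ checkpoints in that window is needed to close the argument. The $\algfix$ and $\algdiff$ parts are, by comparison, direct applications of Theorem~\ref{thm:hoeffding-eps2}.
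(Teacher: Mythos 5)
Your overall strategy---a union bound over decision times plus the master Hoeffding bound (Theorem~\ref{thm:hoeffding-eps2}) applied with the deviation forced by the decision threshold---is exactly the paper's, and your treatments of $\algfix$ and $\algdiff$ are correct and essentially identical to the paper's proofs. The gap is in the $\algseq$ part. The per-checkpoint bound you display, obtained by discarding the $M^2/N_k$ term from $\epsilon_k^2 N_k = \delta^2 N_k + 2\delta M + M^2/N_k$, is
\begin{align*}
\frac{\alpha\xi}{4}\,\exp\Big(-\frac{\delta^2 N_k}{8\Tf(\delta/2)}\Big),
\end{align*}
and the resulting series is \emph{not} uniformly bounded by $\alpha$: since $N_0 \asymp M \asymp \Tf(\delta/2)\log(1/\beta)/\delta$, the exponent at checkpoint $k$ is roughly $\delta\log(1/\beta)(1+\xi)^k$, so there are on the order of $\log(1/\delta)/\log(1+\xi)$ checkpoints whose terms are each within a constant factor of $\alpha\xi/4$. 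The sum therefore scales like $\alpha\log(1/\delta)$ and exceeds $\alpha$ for small $\delta$. Your closing remark correctly identifies that something sharper is needed near the crossover $N_k \approx M/\delta$, but the fix you gesture at (AM--GM, $\epsilon_k^2 N_k \geq 4\delta M$, plus counting the $O(1/\log(1+\xi))$ checkpoints in that window) is not carried out, and on its own it only controls the crossover window; you still need the $M^2/N_k$ term to kill the many checkpoints with $N_k \ll M/\delta$.

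The paper's proof keeps all three terms, writing the $k$-th summand as $\beta\exp\big(-\log(1/\beta)[\zeta_k + \zeta_k^{-1}]\big)$ with $\zeta_k \propto \delta^2 N_k$, so that the exponent grows geometrically in the distance from the crossover in \emph{both} directions. Two further lemmas are then needed to sum this: Lemma~\ref{lem:seq-seq-indiff-adapt-sum} shows that the $\zeta_k$ can be rounded to integer powers of $(1+\xi)$ with each power hit at most twice (this uses $N_{k+2}/N_k \geq 1+\xi$ despite the floors), and Lemma~\ref{lem:seq-seq-indiff-adapt-group} groups those powers into blocks of length $9/(5\xi)$ whose contributions form a geometric series in $\beta^2$, yielding a total of at most $36\beta^3/(5\xi(1-\beta^2)) \leq 3\alpha/4$ under $\alpha,\xi \leq 2/5$. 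Without this (or an equivalent) bookkeeping of how many $\zeta_k$ fall near each level of $\zeta+\zeta^{-1}$, the $\algseq$ claim is not established.
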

Our results differ from those of~\cite{Gyo15Test} because the latter
implicitly control the worst-case error of the algorithm
\begin{align*}
\err\big(\alg\big) = \sup_{f \colon \Omega \rightarrow [0,~1]}
\err\big(\alg,~f\big),
\end{align*}
while our analysis controls $\err\big(\alg,~f\big)$ directly. The corresponding choices made in \cite{Gyo15Test} are
\begin{align*}
N = \frac{\log(1/\alpha)}{\gamma_{0}\delta^2 }, 
M = \frac{\log (\frac2
  {\sqrt{\alpha\xi}})}{\gamma_{0}\delta} , \text{ and }
\epsilon_{k}(\alpha) = \sqrt{\frac{\log (1/\alpha) + 1 + 2   \log{k}}{\gamma_{0}N_{k}}}.
\end{align*}
Hence, the $\Tf$ parameter in
our bounds plays the same role that $\frac{1}{\gamma_0}$
plays in their uniform bounds.
As a result of this close correspondence, we easily see that our
results improve on the uniform result for a fixed function $f$
whenever it converges to its stationary expectation faster than the
chain itself converges--- i.e., whenever 
$\Tf\big(\delta\big) \leq \frac{1}{2\gamma_{0}}$.

The value of the above tests depends substantially on their sample
size requirements. In setting (a), algorithm $\algseq$ is only
valuable if it reduces the number of samples needed compared to
$\algfix$. In setting (b), algorithm $\algdiff$ is valuable because of
its ability to test between hypotheses separated only by a point, but
its utility is limited if it takes too long to run. Therefore, we now
turn to the question of bounding expected stopping times.

In order to o carry out the stopping time analysis, we introduce the
true margin $\Delta = |r - \mu |$.  First, let us introduce some
useful notation.  Let $N(\alg)$ be the number of sampled collected by
$\alg$. Given a margin schedule $\big(\epsilon_{k}\big)$, let
\begin{align*}
 k_{0}^{\ast}\big(\epsilon_{1:\infty}\big) \mydefn \min \big \{ k \geq 1
 \colon \epsilon_{k} \leq \frac{\Delta}2 \big \}, \text{ and } N_{0}^{\ast}\big(\epsilon_{1:\infty}\big) \mydefn
 N_{k_{0}^{\ast}\big(\epsilon_{1:\infty}\big)}.
\end{align*}

We can bound the expected stopping times of $\algseq,\algdiff$ in
terms of $\Delta$ as follows: 
\begin{thm}
\label{thm:seq-stopping-time-all}
Assume either $H_{0}$ or $H_{1}$ holds. Then,
\begin{align}
\E\big[N(\algseq)\big] &\leq \big(1 + \xi\big)\big[\frac{M}{\Delta} +
  \frac4 {\Delta}\sqrt{\frac{2\Tf\big(\delta/2\big)M}{\Delta} +
    8\Tf\big(\delta/2\big)} + 1\big];\\ \E\big[N(\algdiff)\big]
&\leq \big(1 + \xi\big)\big(N_{0}^{\ast} + 1\big) + \frac{32\alpha
  T_{f}\big(\Delta/4\big)}{\Delta^2 }.
\end{align}
\end{thm}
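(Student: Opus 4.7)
The plan is to bound $\E[N(\alg)]$ via summation by parts and then control the post-threshold tail using Theorem~\ref{thm:hoeffding-eps2}. Specifically,
\[
\E[N(\alg)] \;=\; \sum_{k\geq 0}(N_{k+1}-N_k)\,\P(\tau > N_k) \;\leq\; N_{k^{\star}} \,+\, \sum_{k\geq k^{\star}}(N_{k+1}-N_k)\,\P(\tau > N_k),
\]
so the task reduces to choosing a threshold $k^{\star}$ past which the algorithm stops with overwhelming probability, such that $N_{k^{\star}}$ matches the leading term and the residual tail matches the correction term. The tail probabilities are controlled via the downward deviation of $\hat{\mu}_{N_k}$ from $\mu$ forced by failure to stop.

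For $\algseq$: by symmetry of the stopping rule, I assume $\mu \geq r + \delta$, so $\Delta \mydefn \mu - r \geq \delta$. Failure to stop at $N_k$ means $\hat{\mu}_{N_k} < r + M/N_k$, which requires a downward deviation of size $\Delta - M/N_k$. I take $k^{\star}$ to be the first $k$ with $N_k \geq N^{\star} \mydefn M/\Delta + (4/\Delta)\sqrt{2T_f(\delta/2)\,M/\Delta + 8\,T_f(\delta/2)}$; since successive $N_k$ grow by at most a factor of $1+\xi$, this yields $N_{k^{\star}} \leq (1+\xi)\,N^{\star}$, supplying the first two terms. At each $N_k \geq N^{\star}$, the deviation is at least $\Delta/2 \geq \delta/2$, and Theorem~\ref{thm:hoeffding-eps2} bounds $\P(\tau > N_k)$ exponentially. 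The square-root slack built into $N^{\star}$ is calibrated so that the telescoping comparison
\[
(N_{k+1}-N_k)\,e^{-a N_k} \;\leq\; \tfrac{2}{a}\bigl(e^{-aN_k} - e^{-aN_{k+1}}\bigr),
\]
valid when $a(N_{k+1}-N_k) \leq 1$, collapses the residual tail to at most $1$, supplying the trailing $+1$.

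For $\algdiff$: I take $k^{\star} \mydefn k_{0}^{\ast}(\epsilon_{1:\infty})$, the first $k$ for which $\epsilon_k(\alpha) \leq \Delta/2$, so $N_{k^{\star}} \leq N_{0}^{\ast} + 1$ after integer rounding, giving the $(1+\xi)(N_{0}^{\ast}+1)$ piece. For $k \geq k^{\star}$, failing to stop requires $|\hat{\mu}_{N_k} - \mu| \geq \Delta - \epsilon_k(\alpha) \geq \Delta/2$, so Theorem~\ref{thm:hoeffding-eps2} gives $\P(\tau > N_k) \leq \exp(-\Delta^2 N_k / (32\,T_f(\Delta/4)))$. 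Setting $a \mydefn \Delta^2/(32\,T_f(\Delta/4))$ and applying the same telescoping identity collapses the residual tail to $(2/a)\,e^{-a N_{k^{\star}}}$. The defining inequality for $\epsilon_{k^{\star}}(\alpha)$ in \eqref{eq:seq-seq-diff-adapt-eps2-epsk}, combined with $\epsilon_{k^{\star}}(\alpha) \leq \Delta/2$, forces $e^{-a N_{k^{\star}}} \lesssim \alpha$, producing the claimed residual $32\alpha\,T_f(\Delta/4)/\Delta^{2}$.

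The main obstacle is this last step for $\algdiff$: translating between the \emph{stopping} threshold $\epsilon_k(\alpha)$ (which determines $k^{\star}$) and the \emph{deviation} threshold $\Delta/2$ (which governs the Hoeffding tail), so that the $\alpha$-factor baked into $\epsilon_{k^{\star}}(\alpha)$ reappears in the residual rather than being swamped by the geometric growth of the decision times $N_k$. The monotonicity of $T_f$ and the implicit dependence of $\epsilon_{k^{\star}}(\alpha)$ on $N_{k^{\star}}$ through \eqref{eq:seq-seq-diff-adapt-eps2-epsk} are what make this quantitative match possible.
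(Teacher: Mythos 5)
Your treatment of $\algdiff$ is essentially the paper's own argument: threshold at $k_{0}^{\ast}$, deviation $\Delta - \epsilon_{k} \geq \Delta/2$ beyond it, linear rate $a = \Delta^{2}/(32\Tf(\Delta/4))$, a geometric residual, and the defining inequality~\eqref{eq:seq-seq-diff-adapt-eps2-epsk} together with $\epsilon_{k_{0}^{\ast}} \leq \Delta/2$ converting $e^{-aN_{0}^{\ast}}$ into $\alpha$. That half is sound. The $\algseq$ half, however, has a genuine gap. You place the threshold at $N^{\star} = M/\Delta + w$ with $w = (4/\Delta)\sqrt{2\Tf(\delta/2)M/\Delta + 8\Tf(\delta/2)}$ and assert (i) that the downward deviation $\Delta - M/N_{k}$ is at least $\Delta/2$ for all $N_{k} \geq N^{\star}$, and (ii) that the residual sum collapses to at most $1$. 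Claim (i) is false: $\Delta - M/N_{k} \geq \Delta/2$ requires $N_{k} \geq 2M/\Delta$, whereas $w/(M/\Delta) \approx 2\sqrt{\delta/(L\Delta)}$ with $L = \log(2/\sqrt{\alpha\xi})$, which is $\ll 1$ whenever $L\Delta \gg \delta$; at $N^{\star}$ the deviation is only of order $\Delta\, w/(M/\Delta)$, far below $\Delta/2$. Claim (ii) also fails: the Hoeffding exponent at $N^{\star}$ equals $(\Delta N^{\star} - M)^{2}/(8\Tf(\delta/2)N^{\star}) = (4M/\Delta + 16)/(M/\Delta + w)$, an absolute constant (between $2$ and roughly $4$ when $w \leq M/\Delta$), so the very first residual term $(N_{k^{\star}+1} - N_{k^{\star}})\,\P(N > N_{k^{\star}}) \approx \xi N_{k^{\star}} e^{-O(1)}$ scales linearly with $M/\Delta$ and is unbounded; it cannot be $\leq 1$. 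Relatedly, your telescoping identity requires a single linear rate $a$ with $a(N_{k+1}-N_{k}) \leq 1$, but the exponent is quadratic in $N_{k} - M/\Delta$ near the threshold and the gaps $N_{k+1}-N_{k} \approx \xi N_{k}$ grow geometrically, so neither hypothesis holds.

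The conceptual miscalibration is that the square-root term in the theorem is not the location of a threshold past which the tail is negligible; it is the bound on the residual itself, namely the width of the region over which the Gaussian-shaped tail $h(s) = \exp\{-(\Delta s - M)_{+}^{2}/(8\Tf(\delta/2)s)\}$ decays from $1$ to negligible. The paper's route takes the leading term $(1+\xi)M/\Delta$ from $N_{1}$ alone, converts the entire sum over $k \geq 1$ into $(1+\xi)\bigl[1 + \int_{N_{1}}^{\infty} h(s)\,\der s\bigr]$ via Lemma~\ref{lem:seq-seq-stopping-time-sum}, and then evaluates that integral to produce the square-root term. To close your argument you would need to restore that integral (or an equivalent Gaussian-tail computation over $[M/\Delta,\,M/\Delta + O(w)]$) in place of the claim that the residual is $O(1)$.
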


With minor modifications to the proofs in~\cite{Gyo15Test}, we can
bound the expected stopping times of their procedures as
\begin{align*}
\E [N(\algseq)] &\leq (1 + \xi) \Big \{ \frac{M}{\Delta} + \frac2
   {\Delta}\sqrt{\frac{M}{\gamma_{0}\Delta} + \frac{4}{\gamma_{0}}} +
   1 \Big \};\\
\E [ N (\algdiff)] & \leq (1 + \xi) \big(N_{0}^{\ast} + 1\big) +
\frac{4\alpha}{\gamma_{0}\Delta^2 }.
\end{align*}

In order to see how the uniform and adaptive bounds compare, it is
helpful to first note that, under either $H_{0}$ or $H_{1}$, we have
the lower bound $\Delta \geq \delta$. Thus, the dominant term in the
expectations in both cases is $(1 + \xi)M/\Delta$.  Consequently, the
ratio between the expected stopping times is approximately equal to the
ratio between the $M$ values---viz.,
\begin{align}
 \frac{M_{\mathrm{adapt}}}{M_{\mathrm{unif}}} \approx \gamma_{0}\Tf\big(\delta/2\big).
\end{align}
As a result, we should expect a significant improvement in terms of
number of samples when the relaxation time $\frac{1}{\gamma_{0}}$ is
significantly larger than the \mbox{$f$-mixing} time
$\Tf\big(\delta/2\big)$. Framed in absolute terms, we
can write
\begin{align*}
\bar{N}_{\mathrm{unif}} (\algseq) \approx
\frac{\log\big(2/\sqrt{\alpha\xi}\big)}{\gamma_{0}\delta\Delta} \quad
\mbox{and} \quad \bar{N}_{\mathrm{adapt}} (\algseq) \approx
\frac{\Tf\big ( \delta / 2 \big) \log \big( 2 / \sqrt{\alpha
    \xi} \big)}{\delta \Delta}.
\end{align*}
Up to an additive term, the bound for $\algdiff$ is also qualitatively
similar to earlier ones, with $\frac{1}{\delta\Delta}$ replaced by
$\frac{1}{\Delta^2 }$.


\section{Analyzing mixing in practice}
\label{subsec:practical}

We analyze several examples of MCMC-based Bayesian analysis
from our theoretical perspective. These examples demonstrate that
convergence in discrepancy can in practice occur much faster than
suggested by naive mixing time bounds and that our bounds help narrow the gap between theoretical predictions and
observed behavior.

\begin{figure}[h]
\begin{center}
\begin{tabular}{cc}
\widgraph{0.5\linewidth}{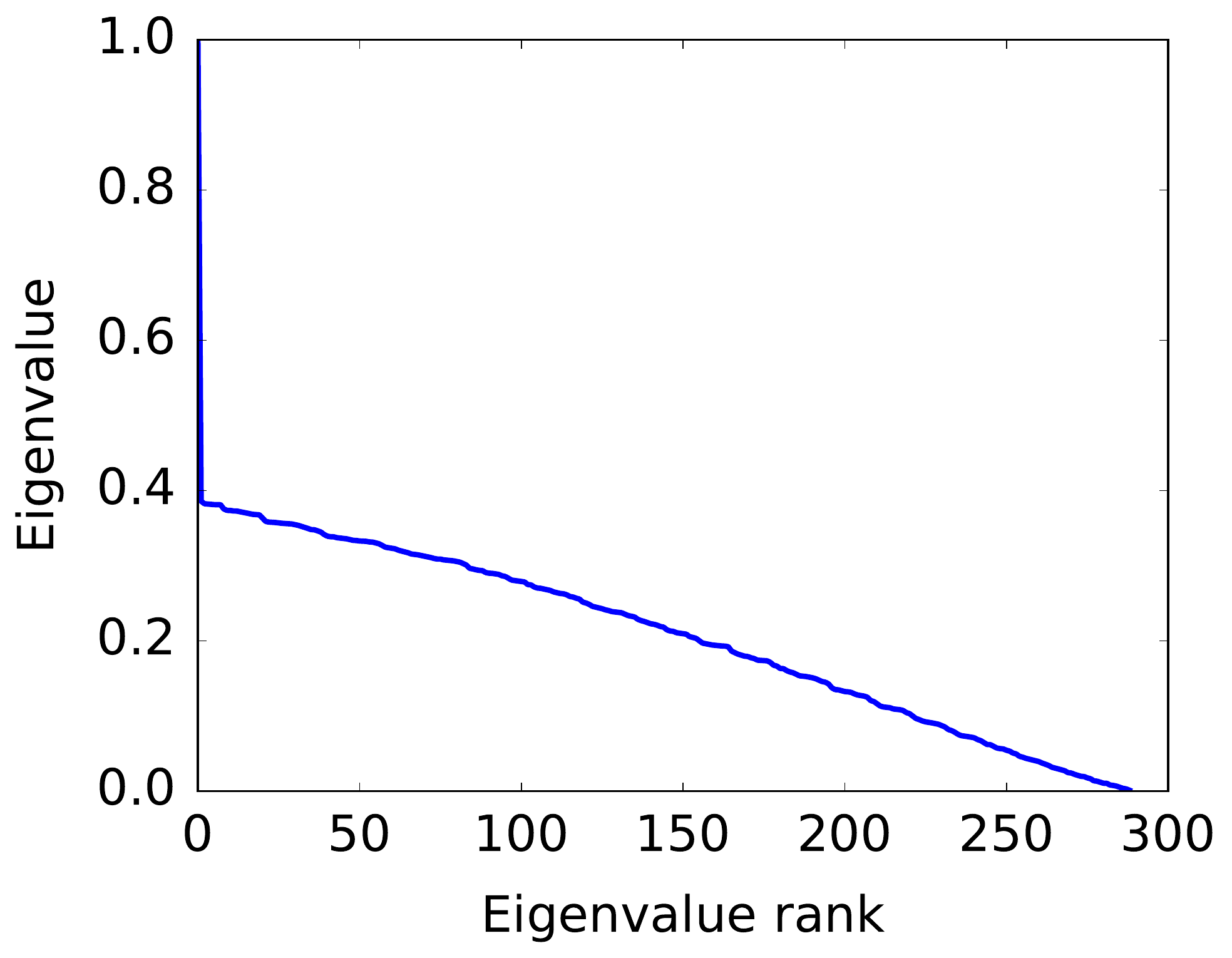} &
\widgraph{0.5\linewidth}{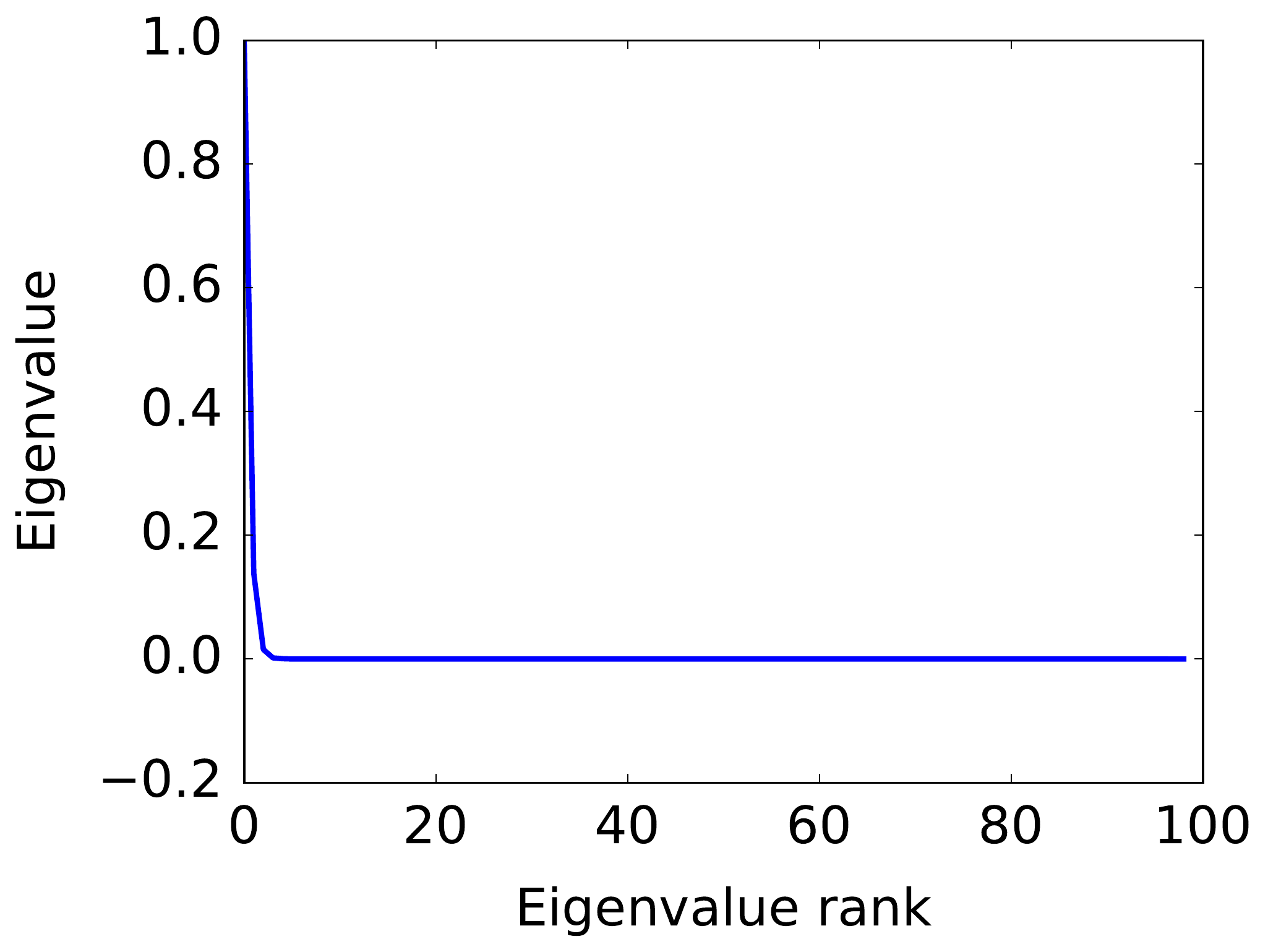} \\
(a) & (b)
\end{tabular}
\begin{tabular}{c}
\widgraph{0.5\linewidth}{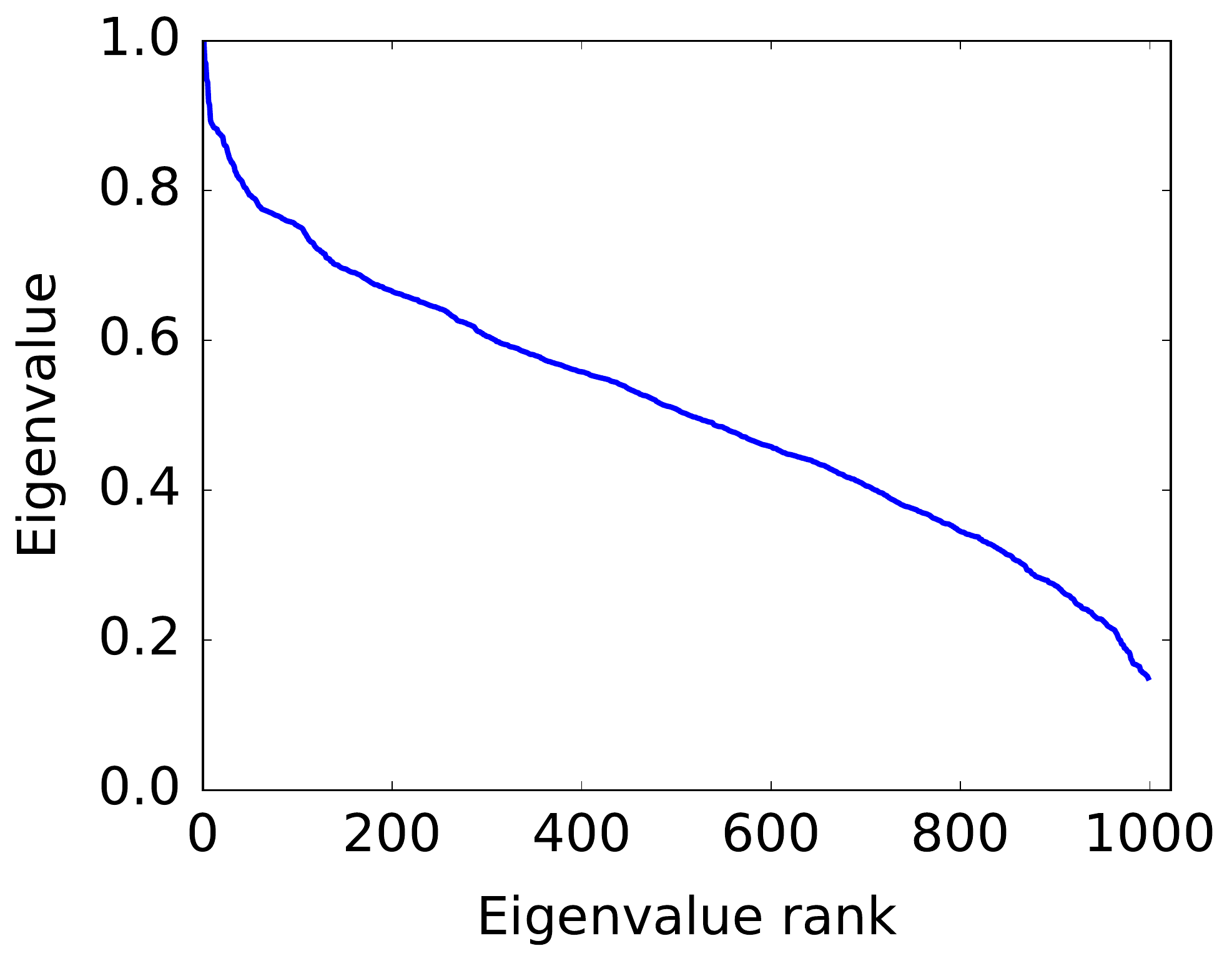} \\
(c)
\end{tabular}
\caption{Spectra for three example chains: (a) Metropolis-Hastings for
  Bayesian logistic regression; (b) collapsed Gibbs sampler for
  missing data imputation; and (c) collapsed Gibbs sampler for a
  mixture model.
\label{fig:example-spectra}}
\end{center}
\end{figure}


\subsection{Bayesian logistic regression}

Our first example is a Bayesian logistic regression problem
introduced by \citet{Rob05MCMC}. The data consists of $23$
observations of temperatures (in Fahrenheit, but normalized by
dividing by $100$) and a corresponding binary outcome---failure $(y =
1)$ or not $(y = 0)$ of a certain component; the aim is to fit a
logistic regressor, with parameters $\big(\alpha,~\beta\big) \in \R^2 $, to the
data, incorporating a prior and integrating over the model uncertainty
to obtain future predictions. More explicitly, following the analysis
in \citet{Gyo12Nonasym}, we consider the following model:
\begin{align*}
p \big( \alpha, ~\beta ~| ~b \big) & = \frac{1}{b} \cdot e^{\alpha}
\exp \big ( -e^{\alpha} / b \big) \\
p\big(y \, \mid \, \alpha,~\beta,~x\big) & \propto \exp
\big(\alpha + \beta x \big),
\end{align*}
which corresponds to an exponential prior on $e^{\alpha}$, an
improper uniform prior on $\beta$ and a logit link for prediction. As
in \citet{Gyo12Nonasym}, we target the posterior by running
a Metropolis-Hastings algorithm with a Gaussian proposal with covariance matrix
$\Sigma ~=~ \begin{pmatrix} 4 & 0 \\ 0 & 10 
\end{pmatrix}.$
Unlike in their paper, however, we discretize the state space to
facilitate exact analysis of the transition matrix and to make our
theory directly applicable. The resulting state space is given by
\begin{align*}
\Omega = \Big \{ \big(\hat{\alpha} \pm i \cdot \Delta,~\hat{\beta} \pm
j \cdot \Delta\big) \, \mid \, 0 \leq i,~j \leq 8 \Big \},
\end{align*}
where $\Delta = 0.1$ and \mbox{$(\hat{\alpha},~\hat{\beta})$} is the
MLE. This space has $d = 17^2 = 289$ elements, resulting in a $289
\times 289$ transition matrix that can easily be diagonalized.

\citet{Rob05MCMC} analyze the probability of failure when the
temperature $x$ is $\SI{65}{\degree}\mathrm{F}$; it is specified by
the function
\begin{align*}
f_{65} \big( \alpha,~\beta \big) = \frac{\exp \big( \alpha + 0.65
  \beta \big)}{1 + \exp \big( \alpha + 0.65 \beta \big)} .
\end{align*}
Note that this function fluctuates significantly under the posterior,
as shown in Figure~\ref{fig:oring-p65-hist}.
\begin{figure}[h!]
\centering
\widgraph{0.5\linewidth}{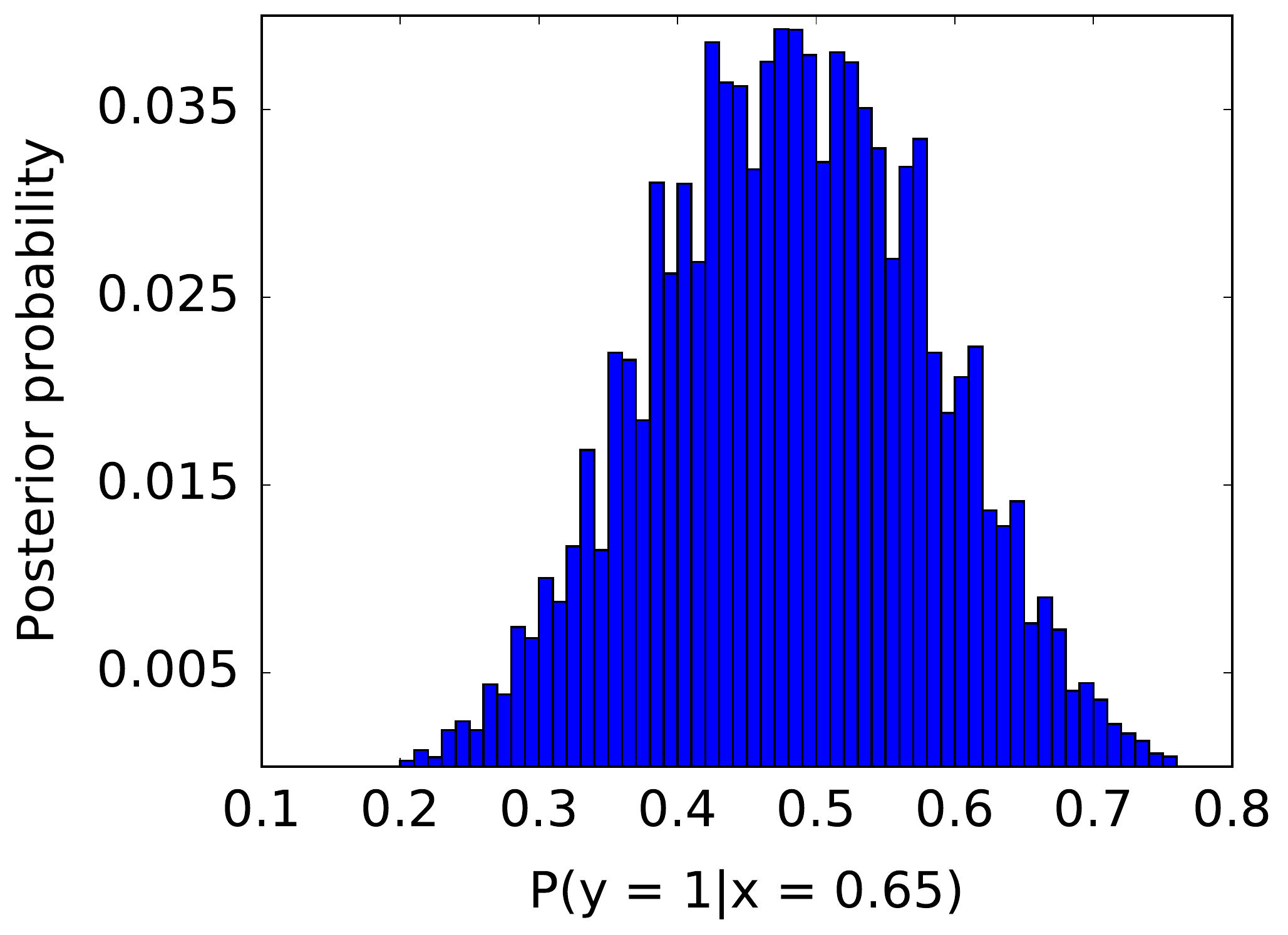}
\caption{Distribution of $f_{65}$ values under the posterior. Despite
  the discretization and truncation to a square, it generally matches
  the one displayed in Figure 1.2 in~\citet{Rob05MCMC}.
\label{fig:oring-p65-hist}}
\end{figure}

We find that this function also happens to exhibit rapid mixing. The discrepancy
$d_{f_{65}}$, before entering an asymptotic regime in which it decays
exponentially at a rate $1 - \gamma^{\ast} \approx 0.386$, first drops
from about $0.3$ to about $0.01$ in just $2$ iterations, compared to
the predicted $10$ iterations from the naive bound
$
d_{f} \big( n \big) \leq d_{\TV} \big( n \big) \leq
\frac{1}{\sqrt{\pimin}} \cdot \big(1 - \gamma^{\ast}\big)^{n} .
$
Figure~\ref{fig:oring-discrep-1} demonstrates this on a log scale,
comparing the naive bound to a version of the bound in
Lemmas~\ref{lem:mix-gap-all} and~\ref{lem:mix-gap-all-oracle}. Note
that the oracle $f$-discrepancy bound improves significantly over the
uniform baseline, even though the non-oracle version does not.  In
this calculation, we took $J = \big \{ 2, \dots, 140 \big \}$ to
include the top half of the spectrum excluding $1$ and computed $\ \|
h_{j} \|_{\infty}$ directly from $P$ for $j \in J$ and likewise for
$q_{j}^{T} f_{65}$. The oracle bound is given by
Lemma~\ref{lem:mix-gap-all-oracle}.  As shown in
panel (b) of Figure~\ref{fig:oring-discrep-1}, this decay is also faster than that
of the total variation distance.

\begin{figure}[h!]
\begin{tabular}{cc}
\widgraph{0.5\linewidth}{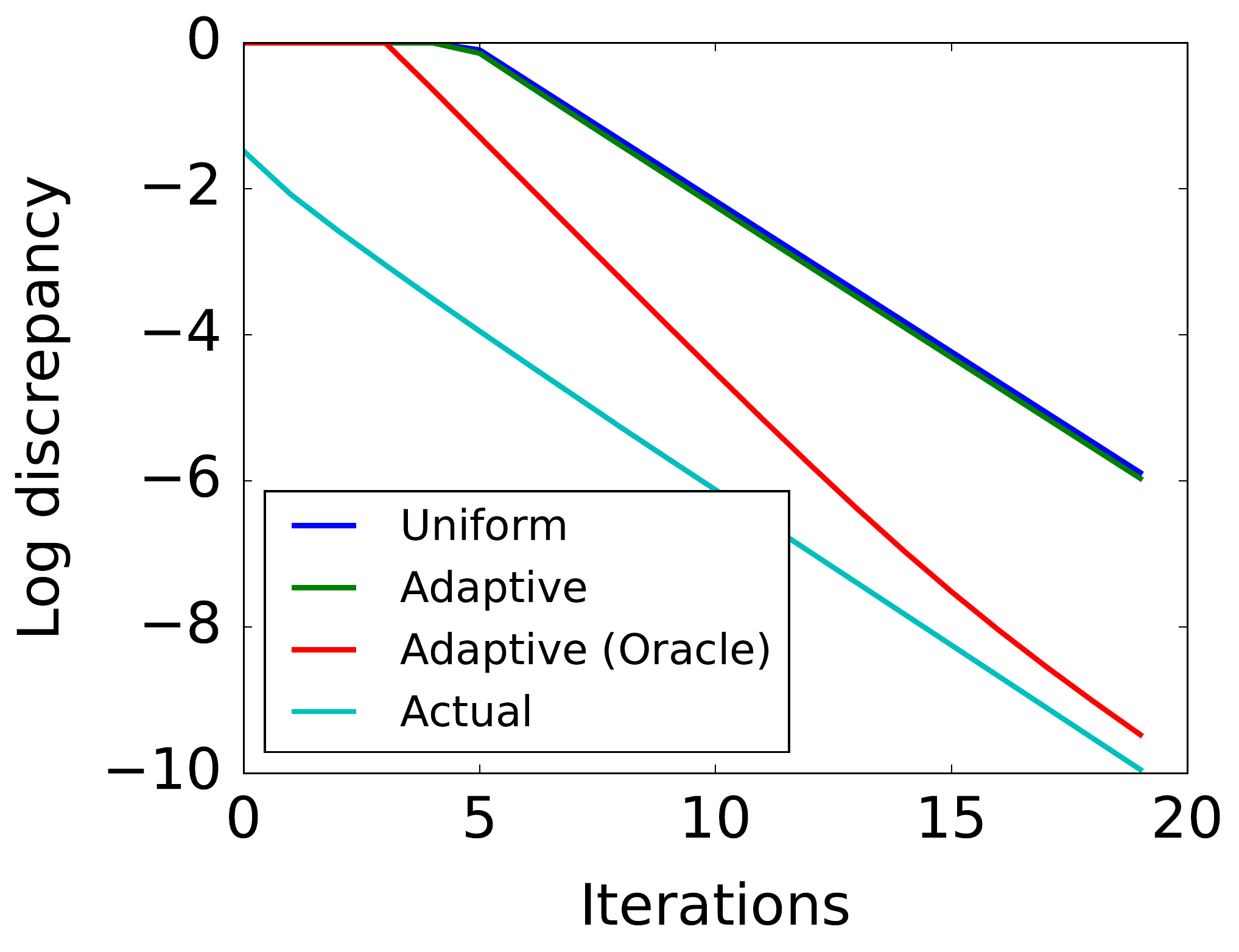} &
\widgraph{0.5\linewidth}{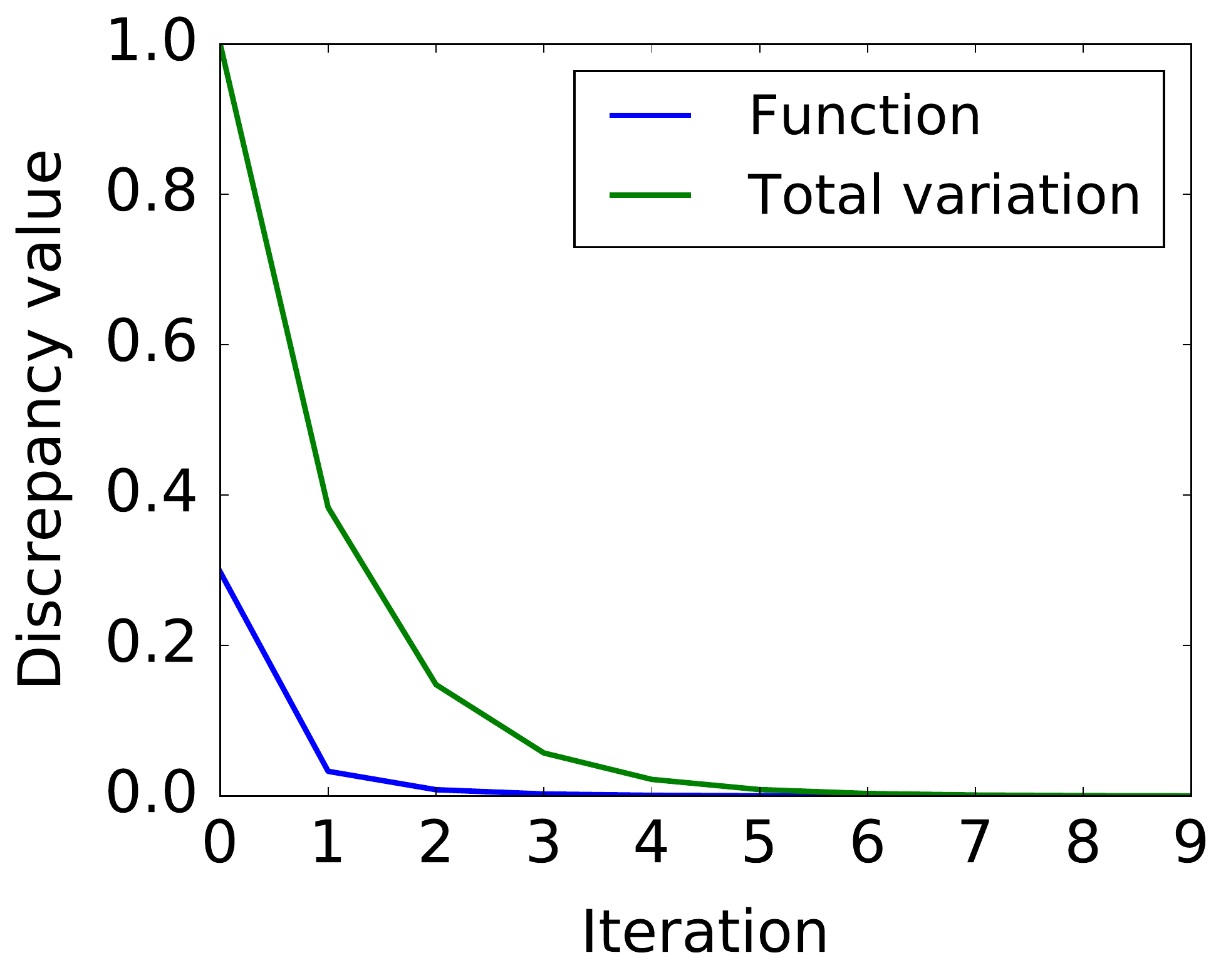}\\
(a) & (b)
\end{tabular}
\caption{(a) Discrepancies (plotted on log-scale) for $f_{65}$ as a
  function of iteration number. The prediction of the naive bound is
  highly pessimistic; the $f$-discrepancy bound goes part of the way toward
  closing the gap and the oracle version of the $f$-discrepancy bound nearly
  completely closes the gap in the limit and also gets much closer to
  the right answer for small iteration
  numbers. (b) Comparison of the function discrepancy $d_{f_{65}}$ and the
  total variation discrepancy $d_{\TV}$. They both decay fairly
  quickly due to the large spectral gap, but the function discrepancy
  still falls much faster.
  \label{fig:oring-discrep-1}}
\end{figure}
%

An important point is that the quality of the $f$-discrepancy bound depends
significantly on the choice of $J$. In the limiting case where $J$
includes the whole spectrum below the top eigenvalue, the oracle bound
becomes exact. Between that and $J = \emptyset$, the oracle bound
becomes tighter and tighter, with the rate of tightening depending on
how much power the function has in the higher versus lower
eigenspaces. Figure \ref{fig:oring-discrep-Js} illustrates this for a
few settings of $J$, showing that although for this function and this
chain, a comparatively large $J$ is needed to get a tight bound, the
oracle bound is substantially tighter than the uniform and non-oracle
$f$-discrepancy bounds even for small $J$.

\begin{figure}[h!]
\begin{center}
\begin{tabular}{cc}
\widgraph{0.5 \linewidth}{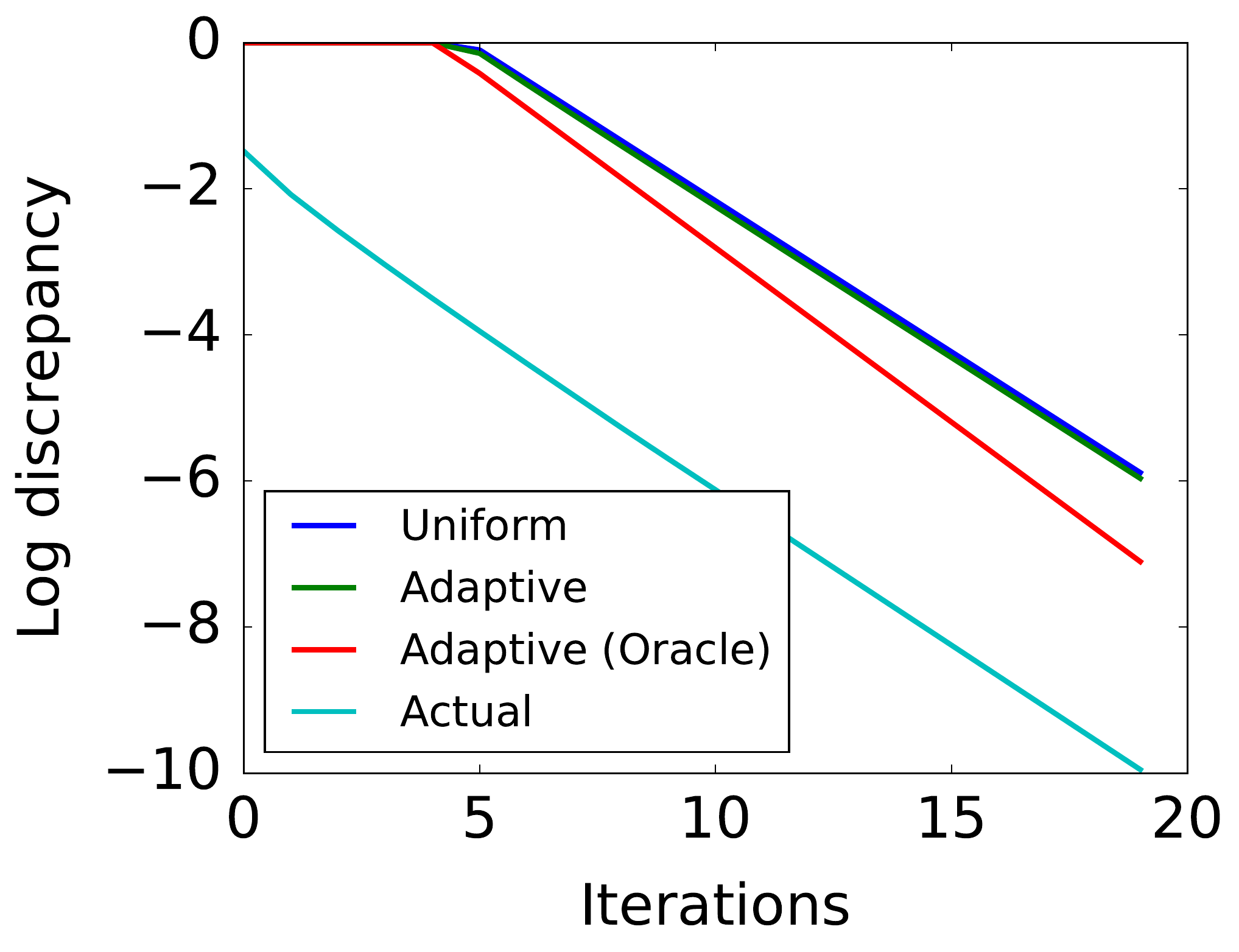} &
\widgraph{0.5 \linewidth}{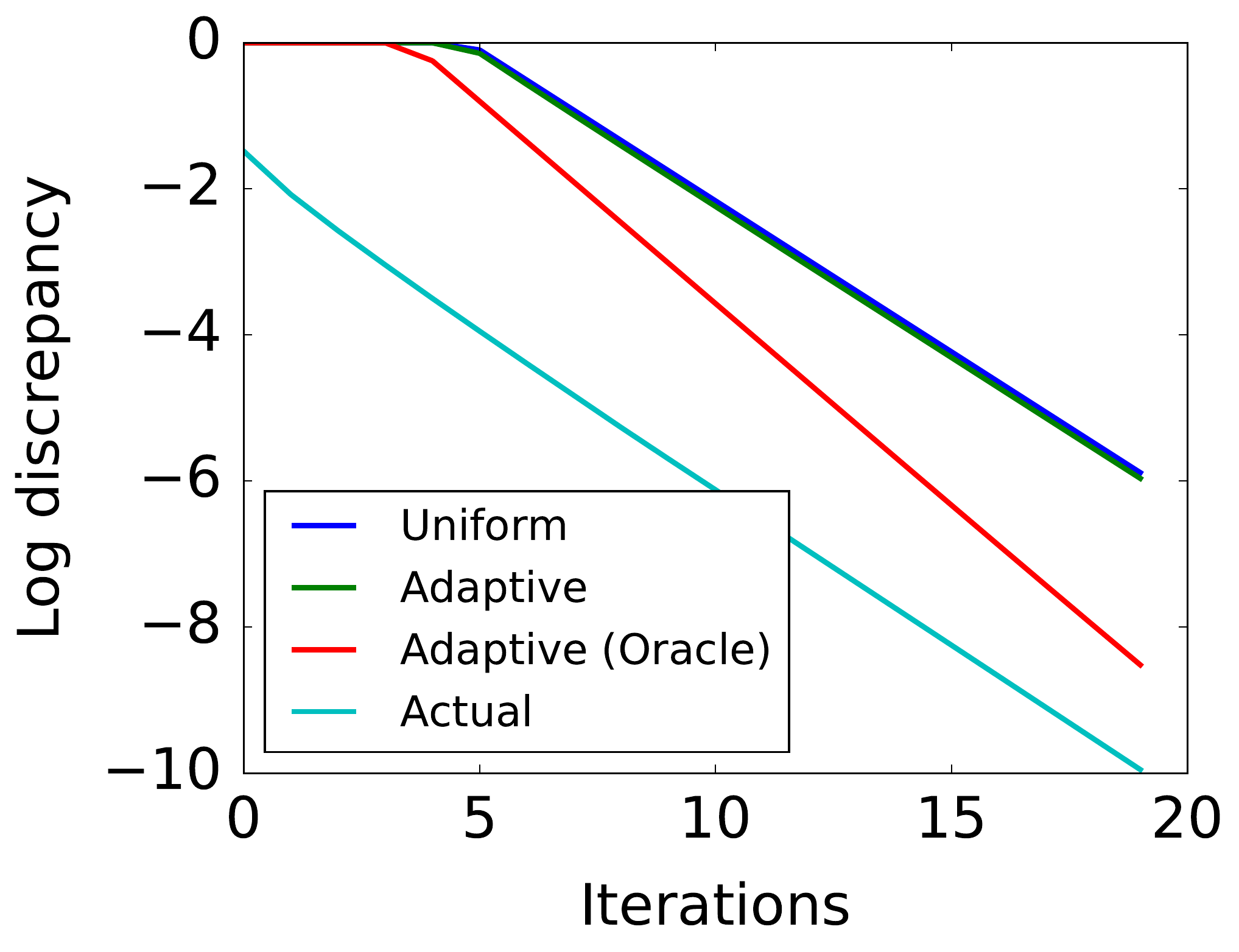} \\
(a) & (b) \\
\widgraph{0.5 \linewidth}{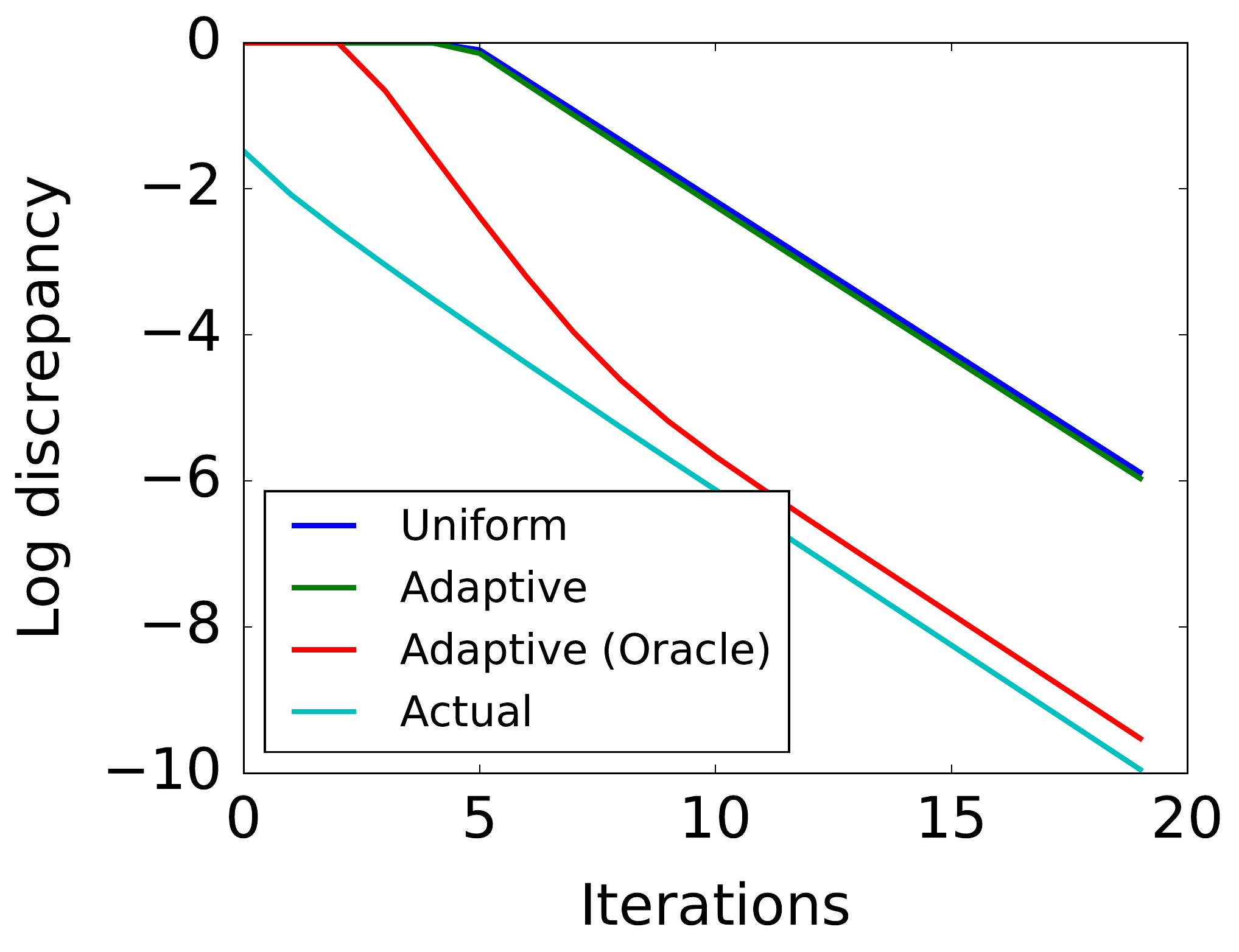} &
\widgraph{0.5 \linewidth}{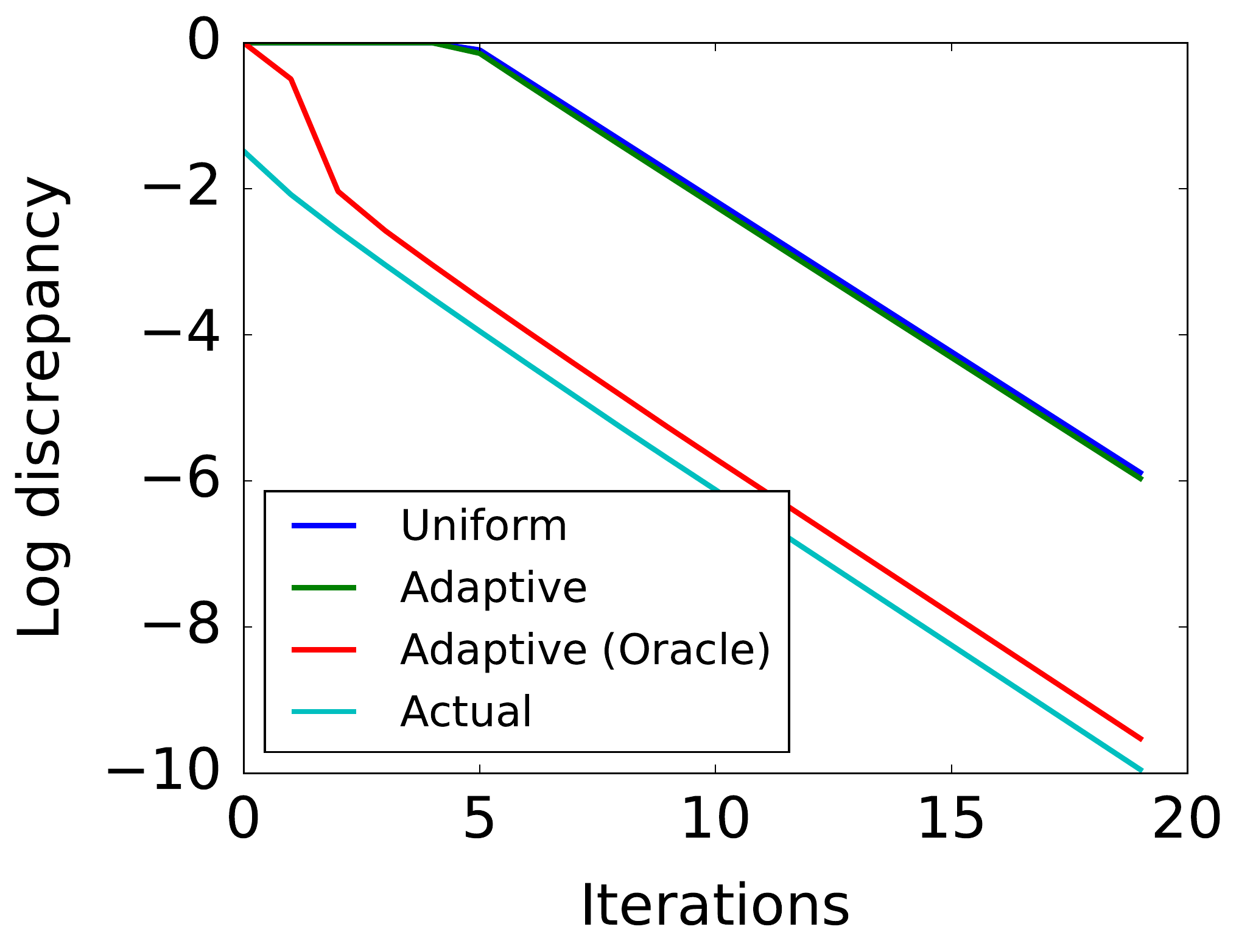} \\
(c) & (d)
\end{tabular}
\end{center}
\caption{Comparisons of the uniform, non-oracle function-specific, and oracle
  function-specific bounds for various choices of $J$. In each case, $J = \{ 2,
  \dots, \Jmax \}$, with $\Jmax = 50$ in panel (a), $\Jmax = 100$ in
  panel (b), $\Jmax = 200$ in panel (c), and $\Jmax = 288$ in panel
  (d). The oracle bound becomes tight in the limit as $\Jmax$ goes to
  $d = 289$, but it offers an improvement over the uniform bound
  across the board.}
\label{fig:oring-discrep-Js}
\end{figure}


\subsection{Bayesian analysis of clinical trials}

The problem of missing data often necessitates Bayesian analysis,
particularly in settings where uncertainty quantification is
important, as in clinical trials. We illustrate how our framework
would apply in this context by considering a clinical trials
dataset~\citep{Ber10Clinical,Gyo12Nonasym}.

The dataset consists of $n = 50$ patients, some of whom participated
in a trial for a drug and exhibited early indicators ($Y_{i}$) of
success/failure and final indicators ($X_{i}$) of success/failure. Among
the 50 patients, both indicator values are available for $n_{X} = 20$ patients;
early indicators are available for $n_{Y} = 20$ patients; and no indicators are
available for $n_{0} = 10$ patients. The analysis depends on the following
parameterization:
\begin{align*}
\P\big(X_{i} = 1~ \mid ~Y_{i} = 0\big) & = \gamma_{0}, \\ \P\big(X_{i}
= 1~ \mid ~Y_{i} = 1\big) & = \gamma_{1}, \\ 
\P\big(X_{i} = 1~ \mid ~Y_{i}~\text{missing}\big) & = p .
\end{align*}
Note that, in contrast to what one might expect, $p$ is to be
interpreted as the marginal probability that $X_{i} = 1$, so that in
actuality $p = \P\big(X_{i} = 1\big)$ unconditionally; we keep the
other notation, however, for the sake of consistency with past
work~\citep{Ber10Clinical,Gyo12Nonasym}.  Conjugate uniform (i.e.,
$\Beta\big(1,~1\big)$) priors are placed on all the model parameters.

The unknown variables include the parameter triple
$\big(\gamma_{0},~\gamma_{1},~p\big)$ and the unobserved $X_{i}$
values for $n_{Y} + n_{0} = 30$ patients, and the full sample space is
therefore $\tilde{\Omega} = [0,~1]^{3} \times \big \{ 0,~1
\big \}^{30}$. We cannot estimate the transition matrix for this
chain, even with a discretization with as coarse a mesh as $\Delta =
0.1$, since the number of states would be $d = 10^{3} \times 2^{30}
\sim 10^{12}$. We therefore make two changes to the original MCMC
procedure. First, we collapse out the $X_{i}$ variables to bring the
state space down to $[0,~1]^{3}$; while analytically collapsing out
the discrete variables is impossible, we can estimate the transition
probabilities for the collapsed chain analytically by sampling the
$X_{i}$ variables conditional on the parameter values and forming a
Monte Carlo estimate of the collapsed transition
probabilities. Second, since the function of interest in the original
work---namely, $f\big(\gamma_0,~\gamma_1,~p\big) = \ones\big(p >
0.5\big)$---depends only on $p$, we fix $\gamma_{0}$ and $\gamma_{1}$
to their MLE values and sample only $p$, restricted to the unit
interval discretized with mesh $\Delta = 0.01$.

\begin{figure}[h!]
\centering 
\widgraph{0.5\linewidth}{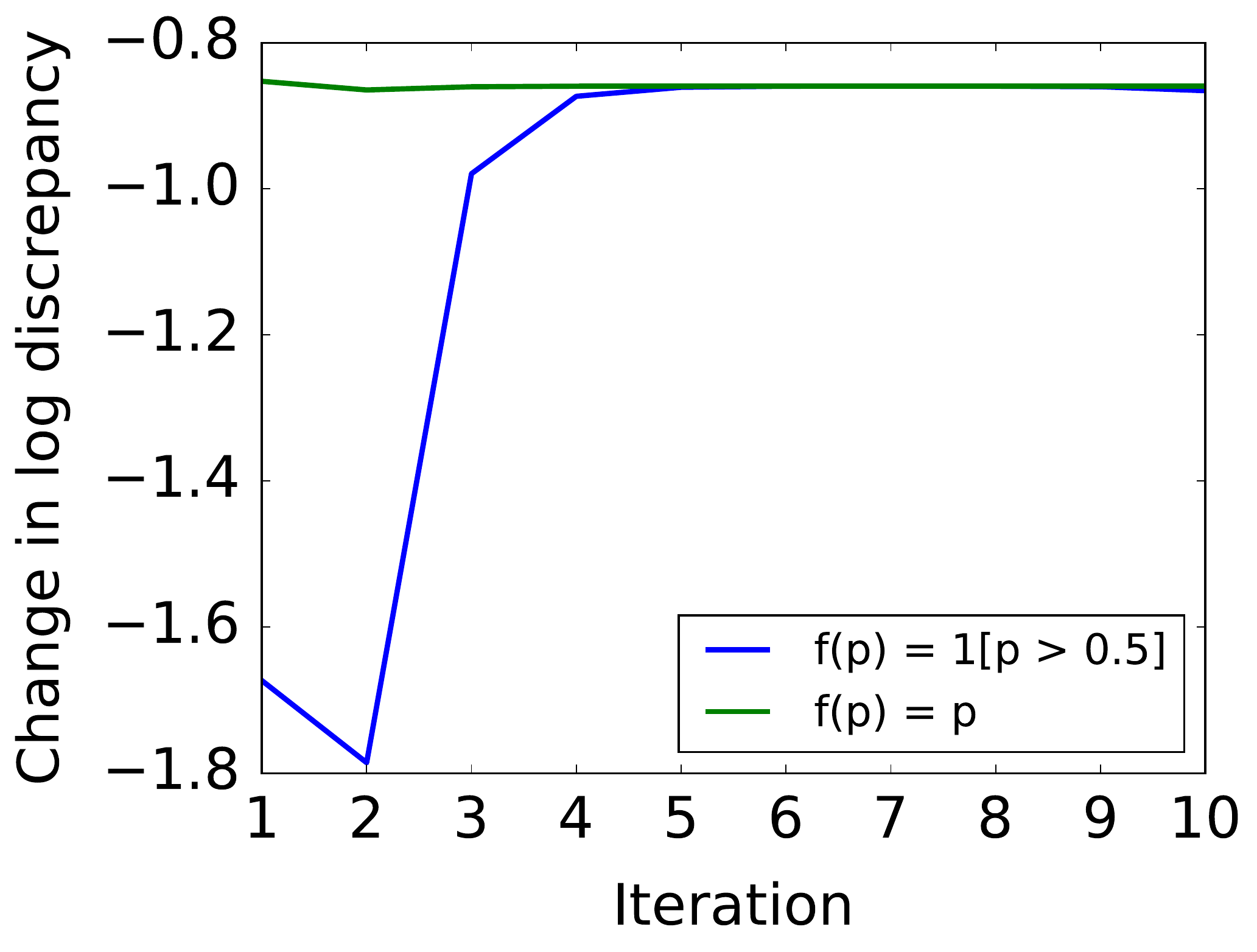}
\caption{Change in log discrepancy for the two functions $f(p) =
  \ones\big(p \geq 0.5\big)$ and $f(p) = p$ considered above. Whereas
  $f(p) = p$ always changes at the constant rate dictated by the
  spectral gap, the indicator discrepancy decays more quickly in the
  first few iterations.\label{fig:clinical-rates}}
\end{figure}

As Figure~\ref{fig:example-spectra} shows, eigenvalue decay occurs
rapidly for this sampler, with $\gamma^{\ast} \approx 0.86$. Mixing
thus occurs so quickly that none of the bounds---uniform or
function-specific---get close to the truth, due to the presence of the constant
terms (and specifically the large term $\frac{1}{\sqrt{\pimin}}
\approx 2.14 \times 10^{33}$). Nonetheless, this example still
illustrates how in actual fact, the choice of target function can make
a big difference in the number of iterations required for accurate
estimation; indeed, if we consider the two functions
\begin{align*}
f_1(p) \mydefn \ones\big(p > 0.5\big), \quad \mbox{and} \quad f_2(p)
\mydefn p,
\end{align*}
we see in Figure~\ref{fig:clinical-rates} that the mixing behavior
differs significantly between them: whereas the discrepancy for the
second decays at the asymptotic exponential rate from the
outset, the discrepancy for the first decreases faster
 (by about an order of magnitude) for the first few
iterations, before reaching the asymptotic rate dictated by the spectral gap.


\subsection{Collapsed Gibbs sampling for mixture models}

Due to the ubiquity of clustering problems in applied statistics and machine
learning, Bayesian inference for mixture models (and their
generalizations) is a widespread application of MCMC
\citep{Gha05IBP,Gri04LDA,Jai07SplitMerge,Mim12SSVI,Nea00DP}. We
consider the mixture-of-Gaussians model, applying it to a
subset of the schizophrenic reaction time data analyzed in
\citet{Bel95Schizophrenia}. The subset of the data we consider
consists of $10$ measurements, with $5$ coming from healthy subjects
and $5$ from subjects diagnosed with schizophrenia. Since our
  interest is in contexts where uncertainty is high, we chose the $5$
  subjects from the healthy group whose reaction times were greatest
  and the $5$ subjects from the schizophrenic group whose reaction
  times were smallest. We considered a mixture with $K = 2$
components, viz.:
\begin{align*}
\mu_{b} & \sim \Norm\big(0,~\rho^2 \big),~ b = 0,~1, \\
\omega  &  \sim \Beta\big(\alpha_0,~\alpha_1\big) \\
Z_{i} ~|~ \omega & \sim \Bern\big(\omega\big) \\
X_{i} ~|~ Z_{i} = b,~\mu & \sim \Norm\big(\mu_{b},~\sigma^2 \big) .
\end{align*}
We chose relatively uninformative priors, setting $\alpha_0 = \alpha_1
= 1$ and $\rho = 237$. Increasing the value chosen in the original
analysis \citep{Bel95Schizophrenia}, we set $\sigma \approx 70$; we
found that this was necessary to prevent the posterior from being too
highly concentrated, which would be an unrealistic setting for
MCMC. We ran collapsed Gibbs on the indicator variables $Z_{i}$ by
analytically integrating out $\omega$ and $\mu_{0:1}$.

As Figure~\ref{fig:example-spectra} illustrates, the spectral gap for
this chain is small---namely, $\gamma_{\ast} \approx 3.83 \times
10^{-4}$---yet the eigenvalues fall off comparatively quickly after
$\lambda_2 $, opening up the possibility for improvement over the
uniform $\gamma_{\ast}$-based bounds. In more detail, define 
\begin{align*}
z^{\ast}_{b} & \mydefn \begin{pmatrix} b & b & b & b & b & 1 - b & 1 -
  b & 1 - b & 1 - b & 1 - b \end{pmatrix},
\end{align*}
corresponding to the cluster assignments in which the patient and
control groups are perfectly separated (with the control group being
assigned label $b$).  We can then define the indicator for exact
recovery of the ground truth by
\begin{align*}
f(z) = \ones\big(z \in \big \{ z_{0}^{\ast},~z_{1}^{\ast}\big \}\big).
\end{align*}

As Figure~\ref{fig:mixture-DFvsTV} illustrates, convergence in terms
of $f$-discrepancy occurs much faster than convergence in total
variation, meaning that predictions of required burn-in times and
sample size based on global metrics of convergence drastically
overestimate the computational and statistical effort required to
estimate the expectation of $f$ accurately using the collapsed Gibbs
sampler. This behavior can be explained in terms of the interaction
between the function $f$ and the eigenspaces of $P$.  Although the
pessimistic constants in the bounds from the uniform
bound~\eqref{eq:mix-TV} and the non-oracle function-specific bound
(Lemma~\ref{lem:mix-gap-all}) make their predictions overly
conservative, the oracle version of the function-specific bound
(Lemma~\ref{lem:mix-gap-all-oracle}) begins to make exact predictions
after just a hundred iterations when applied with $J = \big \{ 1,
\dots, 25 \big \}$; this corresponds to making exact predictions of
$T_{f}\big(\delta\big)$ for $\delta \leq \delta_{0} \approx 0.01$,
which is a realistic tolerance for estimation of $\mu$.
Panel (b) of Figure~\ref{fig:mixture-DFvsTV} documents this by
plotting the $f$-discrepancy oracle bound against the actual value of
$d_{f}$ on a log scale.

\begin{figure}[h!]
\begin{center}
\begin{tabular}{cc}
\widgraph{0.45\linewidth}{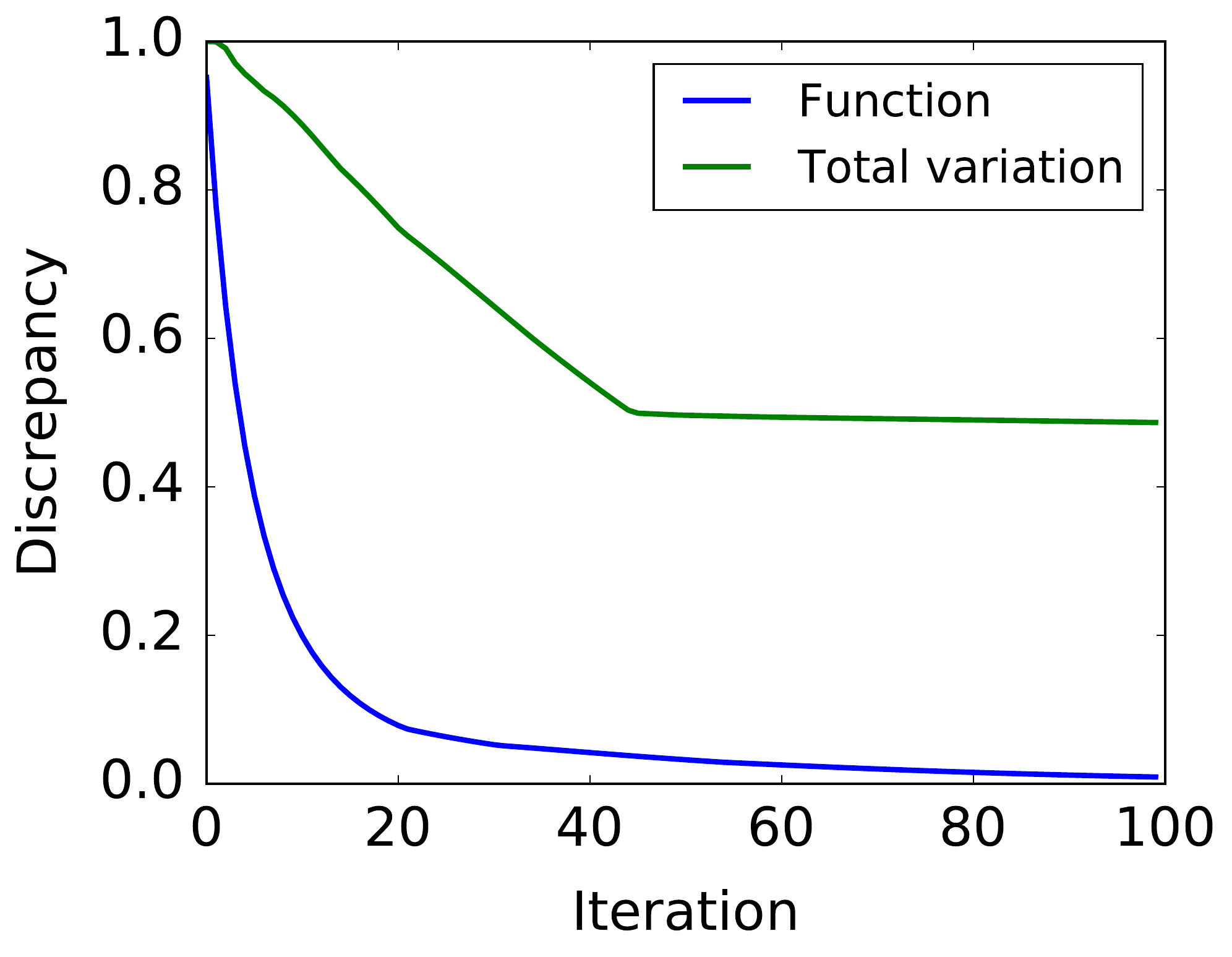} & 
\widgraph{0.45\linewidth}{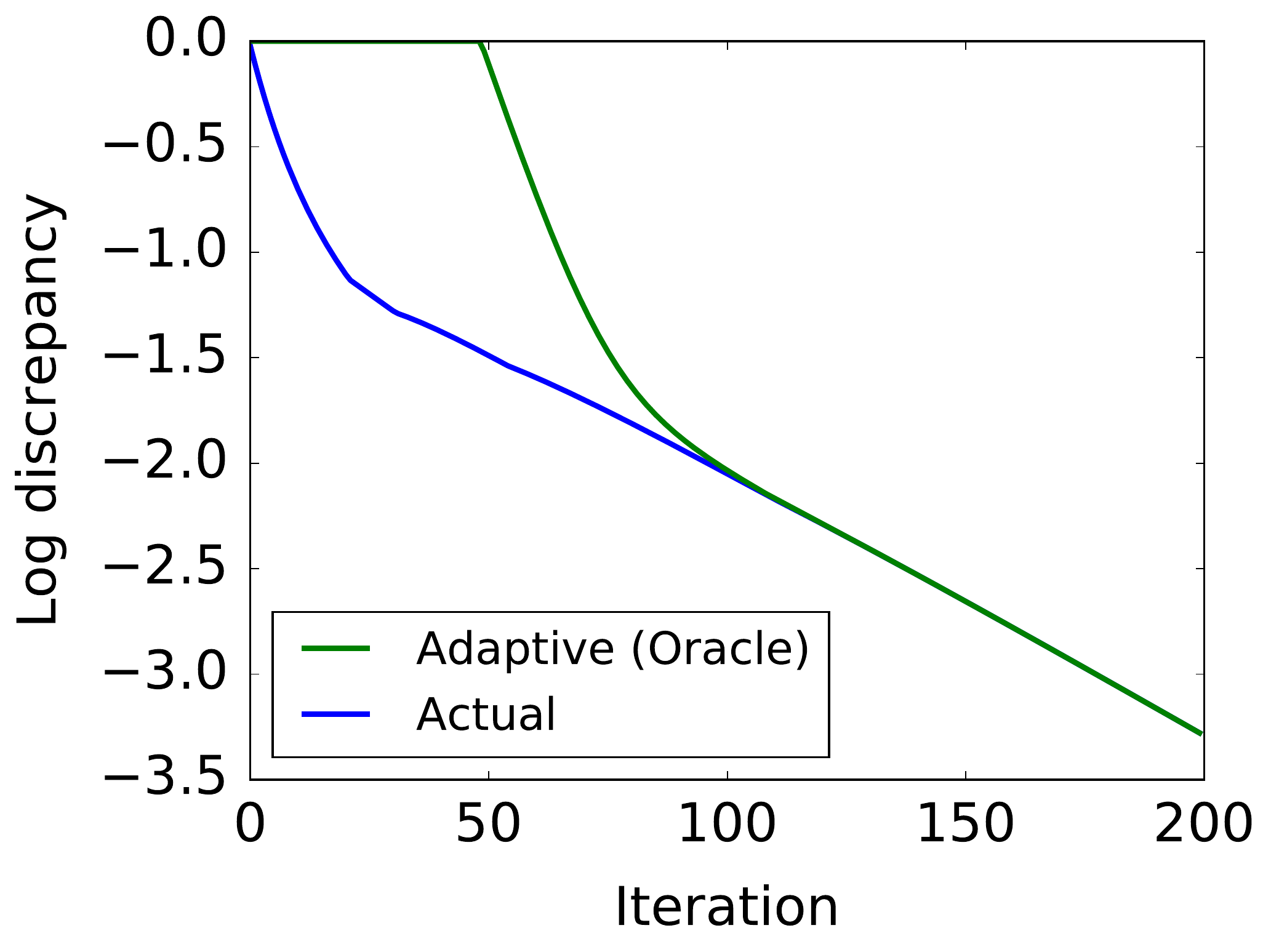}\\
(a) &  (b)
\end{tabular}
\caption{(a) Comparison of the $f$-discrepancy $d_{f}$ and the total
  variation discrepancy $d_{\TV}$ over the first $100$ iterations of
  MCMC. Clearly the function mixes much faster than the overall
  chain. (b) The predicted value of $\log{d_{f}}$ (according to the
  $f$-discrepancy oracle bound---Lemma \ref{lem:mix-gap-all-oracle})
  plotted against the true value. The predictions are close to sharp
  throughout and become sharp at around $100$ iterations.}
\label{fig:mixture-DFvsTV}
\end{center}
\end{figure}

\begin{table}[h]
\begin{center}
\begin{tabular}{c|c|c}
Bound type & $T_{f}\big(0.01\big)$ & $T_{f}\big(10^{-6}\big)$
\\ \hline Uniform & 31,253 & 55,312 \\ Function-Specific & 25,374 & 49,434
\\ Function-Specific (Oracle) & 98 & 409 \\ Actual & 96 & 409
\end{tabular}
\caption{Comparison of bounds on $T_{f}\big(\delta\big)$ for different
  values of $\delta$. The uniform bound corresponds to the bound
  $T_{f}\big(\delta\big) \leq T\big(\delta\big)$, the latter of which
  can be bounded by the total variation bound.  The function-specific bounds
  correspond to Lemmas~\ref{lem:mix-gap-all}
  and~\ref{lem:mix-gap-all-oracle}, respectively. Whereas the uniform
  and non-oracle $f$-discrepancy bounds make highly conservative predictions,
  the oracle $f$-discrepancy bound is nearly sharp even for $\delta$ as large
  as $0.01$.\label{tab:mixture-Tf-bds}}
\end{center}
\end{table}
The mixture setting also provides a good illustration of how the
function-specific Hoeffding bounds can substantially improve on the
uniform Hoeffding bound.  In particular, let us compare the
$T_{f}$-based Hoeffding bound (Theorem~\ref{thm:hoeffding-eps2}) to
the uniform Hoeffding bound established by~\citet{Leo04Hoeffding}. At
equilibrium, the penalty for non-independence in our bounds is
$(2T_{f}(\epsilon/2))^{-1}$ compared to roughly
$\gamma^{-1}_{\ast}$ in the uniform bound. Importantly, however, our
concentration bound applies unchanged even when the chain has not
equilibrated, provided it has approximately equilibrated with respect
to $f$.  As a consequence, our bound only requires a burn-in of
$T_{f}(\epsilon/2)$, whereas the uniform Hoeffding
bound does not directly apply for any finite burn-in. Table~\ref{tab:mixture-Tf-bds} illustrates the 
size of these burn-in times in practice. This issue can
be addressed using the method of \citet{Pau12Conc}, but at the cost of
a burn-in dependent penalty $d_{\TV}(T_0) = \sup_{\pi_{0}}
d_{\TV}(\pi_{n},~\pi)$:
\begin{align}
\label{eq:uniform-hoeffding-burn-in}
\P \Big[ \frac{1}{N - T_{0}} \sum_{n = T_0}^{N} f(X_{n}) \geq
  \mu + \epsilon \Big] \leq d_{\TV}\big(T_{0}\big) +
 \exp \Big \{ -\frac{\gamma_{0}}{2 \big(1 - \gamma_{0} \big)} \cdot
\epsilon^2  \big[N - T_0 \big] \Big \},
\end{align}where we have let $T_{0}$ denote the burn-in time. Note that a matching
bound holds for the lower tail.
For our experiments, we computed the tightest version of the
bound~\eqref{eq:uniform-hoeffding-burn-in}, optimizing $T_0$ in the
range $\big[0,~10^{5}\big]$ for each value of the deviation
$\epsilon$. Even given this generosity toward the uniform bound, the
function-specific bound still outperforms it substantially, as
Figure~\ref{fig:mixture-conc-bds} shows. 

\begin{figure}[h!]
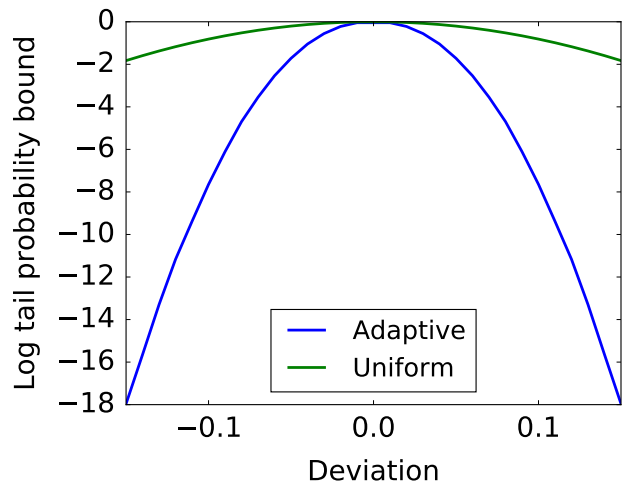

\centering
\widgraph{0.5\linewidth}{figs/SchizoMixture-ExactMatch-Tails-N=1e6}
\caption{Comparison of the (log) tail probability bounds provided by
  the uniform Hoeffding bound due to~\cite{Leo04Hoeffding} with one
  version of our function-specific Hoeffding bound
  (Theorem~\ref{thm:hoeffding-eps2}).  Plots are based on $N = 10^{6}$
  iterations, and choosing the optimal burn-in for the uniform bound
  and a fixed burn-in of $409 \geq T_{f} \big( 10^{-6} \big)$
  iterations for the function-specific bound. The function-specific
  bound improves over the uniform bound by orders of
  magnitude.\label{fig:mixture-conc-bds}}
\end{figure}

For the function-specific
bound, we used the function-specific oracle bound
(Lemma~\ref{lem:mix-gap-all-oracle}) to bound
$T_{f}\big(\frac{\epsilon}{2}\big)$; this nearly coincides with
the true value when $\epsilon \approx 0.01$ but deviates slightly for
larger values of $\epsilon$.



\section{Proofs of main results}
\label{sec:proofs}

This section is devoted to the proofs of the main results of this
paper.

\subsection{Proof of Theorem~\ref{thm:hoeffding-eps2}} 
\label{subsec:proofs-conc-master}

We begin with the proof of the master Hoeffding bound from
Theorem~\ref{thm:hoeffding-eps2}.  At the heart of the proof is the
following bound on the moment-generating function (MGF) for the sum of
an appropriately thinned subsequence of the function values
$\{f(X_n)\}_{n=1}^\infty$.  In particular, let us introduce the
shorthand notation $\Xtil_{m,t} \mydefn X_{(m - 1) \Tf(\epsilon/2) +
  t}$ and $N_0 \mydefn N / T_f(\frac{\epsilon}{2})$.  With this
notation, we have the following auxiliary result:
\begin{lem}[Master MGF bound]
For any scalars $\newalpha \in \real$, $\epsilon \in (0,1)$, and
integer $t \in \big [0, \Tf(\frac{\epsilon}{2}) \big)$, we have
\begin{align}
 \E \left [ \exp \Big( \newalpha \sum_{m = 1}^{N_0} f \big(\Xtil_{m,t}
   \big) \Big) \right] & \leq \exp \left \{\big[
   \frac{1}{2}\newalpha\epsilon + \newalpha \mu + \frac{1}2
   \newalpha^2 \big] \cdot N_0 \right \}.
\end{align}
\label{lem:mgf-master}
\end{lem}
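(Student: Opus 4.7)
The plan is to exploit the definition of $T_f(\epsilon/2)$ by noting that the thinned chain $(\tilde{X}_{m,t})_{m \geq 1}$ has one-step transitions separated by exactly $T_f(\epsilon/2)$ steps of the original Markov chain. By the very definition of the $f$-mixing time, this means that for every state $x$,
\begin{align*}
\big| \E[f(\tilde{X}_{m+1,t}) \mid \tilde{X}_{m,t} = x] - \mu \big| \;\leq\; \tfrac{\epsilon}{2}.
\end{align*}
So even though the thinned chain is generally far from stationary in total variation, its conditional one-step expectations of $f$ are $\epsilon/2$-close to $\mu$ uniformly in the conditioning state.

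First, I would peel off the last term in the sum using the tower property together with the Markov property:
\begin{align*}
\E\Big[\exp\Big(\newalpha \sum_{m=1}^{N_0} f(\tilde{X}_{m,t})\Big)\Big] = \E\Big[\exp\Big(\newalpha \sum_{m=1}^{N_0-1} f(\tilde{X}_{m,t})\Big) \cdot \E\big[e^{\newalpha f(\tilde{X}_{N_0,t})} \,\big|\, \tilde{X}_{N_0-1,t}\big]\Big].
\end{align*}
Next, I would bound the inner conditional MGF by applying Hoeffding's lemma to the bounded random variable $f(\tilde{X}_{N_0,t})$, which takes values in $[0,1]$ (hence also in $[-1,1]$, giving the quadratic coefficient $1/2$):
\begin{align*}
\E\big[e^{\newalpha f(\tilde{X}_{N_0,t})} \,\big|\, \tilde{X}_{N_0-1,t} = x\big] \;\leq\; \exp\!\Big(\newalpha \, \E[f(\tilde{X}_{N_0,t}) \mid \tilde{X}_{N_0-1,t} = x] + \tfrac{1}{2}\newalpha^2\Big).
\end{align*}
Combining this with the $f$-mixing bound on the conditional mean (valid for $\newalpha \geq 0$, which is the relevant sign for the Chernoff bound used in Theorem~\ref{thm:hoeffding-eps2}) yields
\begin{align*}
\E\big[e^{\newalpha f(\tilde{X}_{N_0,t})} \,\big|\, \tilde{X}_{N_0-1,t} = x\big] \;\leq\; \exp\!\Big(\newalpha\mu + \tfrac{1}{2}\newalpha\epsilon + \tfrac{1}{2}\newalpha^2\Big),
\end{align*}
a deterministic bound that can be pulled out of the outer expectation. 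Iterating this peeling $N_0$ times produces the stated product bound.

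The only real subtlety is that the $f$-mixing time is defined as the worst case over starting states, so the uniform-in-$x$ nature of the bound on the conditional mean is essential; fortunately, that uniformity is automatic from the definition. A secondary point to check is that the thinned process inherits the Markov property in the relevant sense—namely, that conditioning on $\tilde{X}_{N_0-1,t}$ alone (and not on the full history of the original chain up to that index) is enough to reduce the conditional expectation to a transition expectation, which follows immediately from the Markov property of the underlying chain $(X_n)$.
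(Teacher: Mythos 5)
Your proposal is correct and follows essentially the same route as the paper's proof: both use the Markov property together with the defining property of $\Tf(\epsilon/2)$ to bound each conditional mean $\E[f(\Xtil_{m+1,t}) \mid \Xtil_{m,t}]$ within $\epsilon/2$ of $\mu$, and then apply a Hoeffding-type MGF bound to the bounded fluctuation to collect the $\frac{1}{2}\newalpha^2$ term; the only cosmetic difference is that you peel terms off one at a time from the end, whereas the paper centers all increments simultaneously and takes iterated expectations. Your observation that the bound as stated really requires $\newalpha \geq 0$ (the sign used in the Chernoff argument) is a fair and accurate refinement of the lemma's hypothesis.
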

\noindent See Section~\ref{SecProofLemMGFMaster} for the proof of this
claim. 
Recalling the definition of $\Xtil_{m,t}$, we have
\begin{align*}
\E \left[ e^{\alpha\sum_{n = 1}^N f(X_n) } \right] & = \E \left[ \exp
  \Big \{ \alpha \sum_{t = 1}^{\Tf(\epsilon/2)} \sum_{m=1}^{N_0}
  f(\Xtil_{m,t}) \Big \} \right] \\
& = \E \left [ \exp \Big \{ \alpha \Tf(\epsilon/2) \big[
    \frac{1}{\Tf(\epsilon/2)} \sum_{t = 1}^{\Tf(\epsilon/2)} \sum_{m =
      1}^{N_0} f(\Xtil_{m,t}) \big] \Big \} \right ] \\
& \leq \frac{1}{\Tf \big( \epsilon/2 \big)} \sum_{t = 0}^{ \Tf \big(\epsilon/2
  \big) - 1} \E \left[ \exp \Big \{ \alpha \Tf \big(\epsilon/2\big)
  \sum_{m = 1}^{N_0} f \big( \Xtil_{m,t} \big) \Big \} \right],
\end{align*}
where the last inequality follows from Jensen's inequality, as applied
to the exponential function.
Applying Lemma~\ref{lem:mgf-master} with $\newalpha = \alpha\Tf\big(\epsilon/2\big)$, we conclude
\begin{align*}
\E \big[e^{\alpha\sum_{n = 1}^{N} f(X_{n}) }\big] \leq \exp \Big \{
\big[\frac{1}{2}\alpha\Tf\big(\frac{\epsilon}{2}\big)\epsilon + \alpha\Tf\big(\frac{\epsilon}{2}\big)\mu + \frac{1}2
  \alpha^2 \Tf^2\big(\frac{\epsilon}{2}\big) \big] \cdot N_0 \Big \},
\end{align*}
valid for $\alpha > 0$.  By exponentiating and applying Markov's
inequality, it follows that
\begin{align*}
 \P \left[ \frac{1}{N}\sum_{n = 1}^{N} f\big(X_n\big) \geq \mu +
   \epsilon \right] & \leq e^{-\alpha(\mu + \epsilon)} \E
 \big[e^{\alpha\sum_{n = 1}^{N} f(X_{n}) }\big] \\
& \leq \exp \left \{\frac{1}{2} \cdot \Big[ -\alpha \:
   \Tf(\frac{\epsilon}{2}) \epsilon + \alpha^2 \:
   \Tf^2(\frac{\epsilon}{2}) \Big] \: N_0(\frac{\epsilon}{2}) \right
 \}.  \nonumber
\end{align*}

The proof of Theorem~\ref{thm:hoeffding-eps2} follows by taking
$\alpha = \frac{\epsilon}{2\Tf\left(\frac{\epsilon}{2}\right)}$ since
\begin{align*}
\P \left[ \frac{1}{N}\sum_{n = 1}^{N} f\big(X_n\big) \geq \mu +
  \epsilon \right] & \leq \exp \left \{ \frac{1}{2} \cdot \big[ -\frac{\epsilon^{2}}{2}
  + \frac{\epsilon^2}{4} \big] \cdot N_0 \right \} \\
& \leq \exp \left \{ -\frac{\epsilon^2 N_0}{8} \right \} \\
& = \exp \left \{ - \frac{\epsilon^2  N}{8 \Tf
  \big(\frac{\epsilon}{2}\big)} \right \}.
\end{align*}


\subsubsection{Proof of Lemma~\ref{lem:mgf-master}}
\label{SecProofLemMGFMaster}

For the purposes of the proof, fix $t$ and let $W_{m} =
\tilde{X}_{m,t}$. For convenience, also define a dummy constant random
variable $W_{0} \mydefn 0$. Now, by assumption, we have
\begin{align*}
\left|\E\left[f\left(W_{1}\right)\right] - \mu\right| \leq
\frac{\epsilon}{2} ~~ \text{and} ~~ \left|\E\left[f\left(W_{m +
    1}\right)~|~W_{m}\right] - \mu\right| \leq \frac{\epsilon}{2}.
\end{align*}
We therefore have the bound
\begin{align}
\label{eq:master-MGF-martingale}
\E \left[e^{\alpha\sum_{m} f\left(W_{m}\right)}\right] & \leq \E
\left[\prod_{m = 1}^{N_0} e^{\alpha\left[f\left(W_{m}\right) -
      \E\left[f\left(W_{m}\right)~|~W_{m - 1}\right]\right]}\right]
\cdot e^{\alpha\mu N_{0} + \frac{\alpha\epsilon N_{0}}{2}}.
\end{align}
But now observe that the random variables $\Delta_{m} =
f\left(W_{m}\right) - \E\left[f\left(W_{m}\right)~|~W_{m - 1}\right]$
are deterministically bounded in $[-1,~1]$ and zero mean conditional
on $W_{m - 1}$. Moreover, by the Markovian property, this implies that
the same is true conditional on \mbox{$W_{< m} \mydefn W_{0:\left(m -
    1\right)}$.} It follows by standard MGF bounds that
\begin{align*}
\E\left[e^{\alpha\Delta_{m}}~|~W_{< m}\right] \leq
e^{\frac{\alpha^{2}}{2}} .
\end{align*}
Combining this bound with inequality~\eqref{eq:master-MGF-martingale},
we conclude that
\begin{align*}
\E\left[e^{\alpha\sum_{m} f\left(W_{m}\right)}\right] & \leq
e^{\frac{\alpha^{2}}{2} \cdot N_{0}}\cdot e^{\alpha\mu N_{0} +
  \frac{\alpha\epsilon N_{0}}{2}},
\end{align*}
as claimed.


\subsection{Proofs of 
Corollaries~\ref{cor:hoeffding-derived-eps2}
and~\ref{cor:hoeffding-derived-Jf}}
\label{subsec:proofs-conc-derived}

In this section, we prove the derived Hoeffing bounds stated in
Corollaries~\ref{cor:hoeffding-derived-eps2}
and~\ref{cor:hoeffding-derived-Jf}.


\subsubsection{Proof of Corollary~\ref{cor:hoeffding-derived-eps2}}
\label{SecProofThmHoeffdingDerivedEps2}

The proof is a direct application of
Theorem~\ref{thm:hoeffding-eps2}. Indeed, it suffices to note that if
$\epsilon \leq \frac{2\lambda_{f}}{\sqrt{\pimin}}$, then
\begin{align*}
\Tf \big(\frac{\epsilon}{2}\big) \leq \frac{\log \big(
  \frac{2}{\epsilon\sqrt{\pimin}}\big)}{\log \big(
  \frac{1}{\lambda_f} \big)} = \frac{ \log \big( \frac{2}{\epsilon}
  \big) + \frac{1}2  \log \big(
  \frac{1}{\pimin} \big)}{\log \big(\frac{1}{\lambda_f}\big)},
\end{align*}
which yields the first bound. Turning to the second bound, note that
if $\epsilon  > \frac{2\lambda_{f}}{\sqrt{\pimin}}$, then
equation~\eqref{eq:mix-gap} implies that
$\Tf\big(\frac{\epsilon}{2}\big) = 1$, which establishes the
claim.


\subsubsection{Proof of Corollary~\ref{cor:hoeffding-derived-Jf}}

The proof involves combining Theorem~\ref{thm:hoeffding-eps2} with
Lemma~\ref{lem:mix-gap-all}, using the setting \mbox{$\epsilon =
  2\left(\Delta + \Delta_{J}\right)$.} We begin by combining the
bounds $\lambda_{J} \leq 1$, $d_{\TV}\left(\pi_{0},~\pi_{n}\right)
\leq 1$, $d_{f}\left(\pi_{0},~\pi_{n}\right) \leq 1$, and
$\E_{\pi}\left[f^{2}\right] \leq 1$ with the claim of
Lemma~\ref{lem:mix-gap-all} so as to find that
\begin{align*}
d_{f}\left(\pi_{0},~\pi_{n}\right) & \leq \Delta_{J}^{\ast} +
\frac{\lambda_{-J}^{n}}{\sqrt{\pimin}} \; \leq \; \Delta_{J} +
\frac{\lambda_{-J}^{n}}{\sqrt{\pimin}} .
\end{align*}
It follows that
\begin{align*}
\Tf (\frac{\epsilon}{2}) & = \Tf\left(\Delta_{J} + \Delta\right) \leq
\frac{\log \left(\frac{1}{\Delta}\right) + \frac{1}{2} \log
  \left(\frac{1}{\pimin}\right)}{\log(\frac{1}{\lambda_{-J}})} \qquad
\mbox{whenever $\Delta \leq \frac{\lambda_{-J}}{\sqrt{\pimin}}$.}
\end{align*}
Plugging into Theorem~\ref{thm:hoeffding-eps2} now yields the first
part of the bound. On the other hand, if $\Delta >
\frac{\lambda_{-J}}{\sqrt{\pimin}}$, then Lemma~\ref{lem:mix-gap-all}
implies that $\Tf\left(\Delta_{J} + \Delta\right) = 1$, which proves
the bound in the second case.

%
%
%
%

\subsection{Proof of Proposition~\ref{prop:lower-bound}}
\label{subsec:proofs-lower}

In order to prove the lower bound in
Proposition~\ref{prop:lower-bound}, we first require an auxiliary
lemma:

\begin{lem}
\label{lem:lower-bound-helper}
Fix a function $\delta \colon \left(0,~1\right) \rightarrow
\left(0,~1\right)$ with $\delta\left(\epsilon\right) > \epsilon$. For
every constant $c_{0} \geq 1$, there exists a Markov chain $P_{c_0}$
and a function $f_{\epsilon}$ on it such that $\mu = \frac{1}{2}$,
yet, for $N = c_{0}\Tf\left(\delta\left(\epsilon\right)\right)$, and
starting the chain from stationarity,
\begin{align*}
\P_{\pi}\left(\left|\frac{1}{N}\sum_{n = 1}^{N}
f_{\epsilon}\left(X_{n}\right) - \frac{1}{2}\right| \geq
\epsilon\right) \geq \frac{1}{3} .
\end{align*}
\end{lem}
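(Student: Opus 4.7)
The plan is to exhibit a slowly-mixing two-state chain together with a simple antipodal function, designed so that starting from stationarity there is a constant probability of starting in the ``wrong'' state and remaining there for all $N = c_0 T_f(\delta(\epsilon))$ steps, producing a deterministic bias of magnitude $\delta(\epsilon) > \epsilon$ in the empirical average. This is the cleanest instantiation of the intuitive mechanism described around equation~\eqref{eq:lower-bd-intuition}: the chain is in its stationary distribution marginally, yet a non-negligible fraction of starting states produce almost-sure deviations.

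Concretely, I would take $P_{c_0}$ to be the symmetric two-state chain on $\{a,b\}$ with off-diagonal entry $p = 1 - (2/3)^{1/c_0}$; this depends only on $c_0$, and a short check gives $p \in (0, 1/2)$ for all $c_0 \geq 1$, along with $(1-p)^{c_0} = 2/3$. For the function, I would set $f_\epsilon(a) = \tfrac{1}{2} + \delta(\epsilon)$ and $f_\epsilon(b) = \tfrac{1}{2} - \delta(\epsilon)$, which lies in $[0,1]$ provided $\delta(\epsilon) \leq \tfrac{1}{2}$; we may assume this WLOG, since for $\delta(\epsilon) > \tfrac{1}{2}$ any $f \in [0,1]$ with $\mu = \tfrac{1}{2}$ satisfies $d_f \leq \tfrac{1}{2} < \delta(\epsilon)$ and the statement is vacuous. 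A direct computation gives $d_{f_\epsilon}(\pi_n^{(i)}, \pi) = \delta(\epsilon)(1-2p)^n$ for either starting state $i \in \{a,b\}$, which is strictly less than $\delta(\epsilon)$ for every $n \geq 1$ since $p \in (0, 1/2)$. Consequently $T_f(\delta(\epsilon)) = 1$ (the minimum value in $\mathbb{N}$), so $N = c_0 \cdot 1 = c_0$.

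With these choices the lower bound is essentially immediate. By stationarity, $\P_\pi(X_0 = a) = \tfrac{1}{2}$, and conditional on $X_0 = a$ the chain remains at $a$ for all steps $n = 1, \ldots, c_0$ with probability $(1-p)^{c_0} = 2/3$; on this joint event $f_\epsilon(X_n) = \tfrac{1}{2} + \delta(\epsilon)$ for every $n$, so the empirical mean equals $\tfrac{1}{2} + \delta(\epsilon) > \tfrac{1}{2} + \epsilon$. Combining gives
\[
\P_\pi \Big( \Big| \frac{1}{N} \sum_{n=1}^{N} f_\epsilon(X_n) - \frac{1}{2} \Big| \geq \epsilon \Big) \;\geq\; \P_\pi(X_0 = a, \, X_1 = \cdots = X_{c_0} = a) \;=\; \frac{1}{2} \cdot \frac{2}{3} \;=\; \frac{1}{3}.
\]

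The main obstacle, such as it is, lies in the decoupling required by the lemma statement: the chain $P_{c_0}$ must depend only on $c_0$, while the function is allowed to depend on both $c_0$ and $\epsilon$. Matching the function's amplitude $c = \delta(\epsilon)$ with the $c_0$-only choice of $p$ achieves this decoupling cleanly, because it pins $T_f(\delta(\epsilon))$ at its minimum value $1$ for every $\epsilon$, so that the stay probability $(1-p)^{c_0}$ is a universal constant independent of $\epsilon$. A more naive parametrization where $p$ is allowed to depend on $\delta(\epsilon)$ would also work but would violate the letter of the lemma; the exponent $1/c_0$ in $(2/3)^{1/c_0}$ is exactly what is needed to make the stay probability a function of $c_0$ alone.
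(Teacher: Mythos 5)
Your proposal is correct, and it rests on exactly the mechanism the paper has in mind --- a constant-probability ``trapping'' event under stationarity on which the empirical average is pinned at $\tfrac12 + \delta(\epsilon)$ --- but the concrete construction is genuinely different. The paper takes $P_{c_0}$ to be the lazy random walk on a line graph with $2d$ states, $d > 2c_0$, and lets $f$ equal $\tfrac12 - \delta$ on the left half and $\tfrac12 + \delta$ on the right half; conditional on starting in one of the two outer quarters (stationary probability at least $\tfrac13$), the walk provably cannot cross the midpoint in fewer than $d/2 > N$ steps, so the deviation is \emph{deterministic} given the starting location. You instead use a two-state chain and make the trapping \emph{probabilistic}, tuning the holding probability so that $(1-p)^{c_0} = \tfrac23$ and the product with $\P_\pi(X_0 = a) = \tfrac12$ is exactly $\tfrac13$. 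Both constructions correctly decouple the chain (depending on $c_0$ only) from the function (depending on $\epsilon$ only), and both pin $T_f(\delta(\epsilon)) = 1$; yours is more minimal and makes the role of $c_0$ transparent, while the paper's deterministic-confinement version has the advantage that the deviation persists almost surely for \emph{all} $N' < d/2$ rather than only on an event tuned to a single $N$. Two small remarks. First, your dismissal of the case $\delta(\epsilon) > \tfrac12$ as ``vacuous'' is not right --- the existence claim still has content there --- but it is easily repaired by capping the amplitude at $\min\{\delta(\epsilon), \tfrac12\}$, which still forces $T_f(\delta(\epsilon)) = 1$ and yields a deviation of $\tfrac12 \geq \epsilon$ whenever $\epsilon \leq \tfrac12$; for $\epsilon > \tfrac12$ no $[0,1]$-valued $f$ with mean $\tfrac12$ can deviate by $\epsilon$ at all, a defect shared by (and left unaddressed in) the paper's own construction, whose $f$ exits $[0,1]$ in that regime. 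Second, both your argument and the paper's implicitly take $c_0$ to be an integer so that $N = c_0$ is a number of steps; since the lemma is invoked in the proof of Proposition~\ref{prop:lower-bound} with $c_0$ a ceiling, this is harmless.
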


Using this lemma, let us now prove Proposition~\ref{prop:lower-bound}.
Suppose that we make the choices
\begin{align*}
c_{0} & \mydefn \left\lceil\frac{\log
  7}{c_{1}\epsilon^{2}}\right\rceil \geq 1, \quad \mbox{and} \quad
N_{c_{1},\epsilon} \mydefn c_{0}\Tf\left(\epsilon\right),
\end{align*}
in Lemma~\ref{lem:lower-bound-helper}.  Letting $P_{c_0}$ be the
corresponding Markov chain and $f_{\epsilon}$ the function guaranteed
by the lemma, we then have
\begin{align*}
\P_{\pi}\left(\left|\frac{1}{N_{c_1,\epsilon}}\sum_{n =
  1}^{N_{c_1,~\epsilon}} f_{\epsilon}\left(X_{n}\right) -
\frac{1}{2}\right| \geq \epsilon \right) & \geq \frac{1}{3} \; > \;
\frac{2}{7} \; \geq \; 2 \cdot \exp\left(-\frac{c_{1}
  N_{c_1,\epsilon}\epsilon^{2}}{\Tf\left(\delta(\epsilon)\right)}
\right).
\end{align*}

\paragraph{Proof of Lemma~\ref{lem:lower-bound-helper}:}
It only remains to prove Lemma~\ref{lem:lower-bound-helper}, which we
do by constructing pathological function on a chain graph, and letting
our Markov chain be the lazy random walk on this graph.  For the
proof, fix $\epsilon > 0$, let $\delta = \delta\left(\epsilon\right)$
and let $\Tf = \Tf\left(\delta\right)$. Now choose an integer $d > 0$
such that $d > 2c_{0}$ and let the state space be $\Omega$ be the line
graph with $2d$ elements with the standard lazy random walk defining
$P$. We then set
\begin{align*}
f\left(i\right) & = \begin{cases}
\frac{1}{2} - \delta & ~ 1 \leq i \leq d, \\
\frac{1}{2} + \delta &~d + 1 \leq i \leq 2d . 
\end{cases}
\end{align*}
It is then clear that $\Tf = 1$.

Define the bad event
\begin{align*}
\mathcal{E} = \left\lbrace X_{1} \in \left[0,~\frac{d}{2}\right] \cup
\left[\frac{3d}{2},~2d\right]\right\rbrace .
\end{align*}
When this occurs, we have 
\begin{align*}
\left|\frac{1}{N'}\sum_{n = 1}^{N'} f\left(X_{n}\right) -
\frac{1}{2}\right| \geq \delta > \epsilon \qquad \mbox{with
  probability one},
\end{align*}
for all $N' < \frac{d}{2}$. Since $N = c_{0} < \frac{d}{2}$, we can
set $N' = N$.

On the other hand, under $\pi$, the probability of $\mathcal{E}$ is
$\geq \frac{1}{3}$. (It is actually about $\frac{1}{2}$, but we want
to ignore edge cases.) The claim follows immediately.


\subsection{Proofs of confidence interval results}
\label{subsec:proofs-confidence}

Here we provide the proof of the confidence interval corresponding to
our bound (Theorem~\ref{thm:confidence-adaptive-hoeffding}). Proofs of
the claims~\eqref{eq:confidence-unif-hoeffding}
and~\eqref{eq:confidence-clt-be} can be found in
Appendix~\ref{app:proof-confidence}.

As discussed in Section~\ref{subsec:confidence}, we actually prove a
somewhat stronger form of
Theorem~\ref{thm:confidence-adaptive-hoeffding}, in order to guarantee
that the confidence interval can be straightforwardly built using an
upper bound $\tilde{T}_{f}$ on the $f$-mixing time rather than the
true value. Setting $\tilde{T}_{f} = \Tf$ recovers the original
theorem.

Specifically, suppose $\tilde{T}_{f} \colon \N \rightarrow \R_{+}$ is
an upper bound on $\Tf$ and note that the corresponding tail bound
becomes $e^{-\tilde{r}_{N}(\epsilon)/8}$, where
\begin{align*}
\tilde{r}_{N}(\epsilon) = \epsilon^2
\left[\frac{N}{\tilde{T}_{f}\big(\frac{\epsilon}{2}\big)} - 1 \right]
.
\end{align*}
This means that, just as before we wanted to make the rate $r_{N}$ in
equation~\eqref{eq:rN-def} at least as large as $8 \log\frac2
{\alpha}$, we now wish to do the same with $\tilde{r}_{N}$, which
means choosing $\epsilon_{N}$ with
$\tilde{r}_{N}\big(\epsilon_{N}\big) \geq 8 \log \frac2 {\alpha}$. We
therefore have the following result.

\begin{prop}
\label{prop:confidence-adaptive-hoeffding-bdd}
For any width $\epsilon_{N} \in \tilde{r}_{N}^{-1}\big(\big[8
  \log\big(2/\alpha\big),~\infty\big)\big)$, the set
\begin{align*}
\FUNCINT = \left[ \frac{1}{N - T_{f}\big(\frac{\epsilon}{2} \big)}
  \sum_{n = \tilde{T}_{f} \big( \frac{\epsilon}{2} \big)}^{N}
  f\big(X_{n}\big) \pm \epsilon_{N} \right]
\end{align*}
is a $1 - \alpha$ confidence interval for $\mu = \E_{\pi}\big[f\big]$.
\end{prop}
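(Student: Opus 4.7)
The plan is to apply the master Hoeffding bound (Theorem~\ref{thm:hoeffding-eps2}) to the post burn-in segment of the chain and then union bound the two tails. The only subtlety is bookkeeping: we are allowed to replace the true mixing time $T_f$ by the upper bound $\tilde T_f$ both in the burn-in length and in the denominator of the exponent, so we need to check that weakening in this manner does not break any hypothesis of Theorem~\ref{thm:hoeffding-eps2}.

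First I would set $T_0 \mydefn \tilde T_f(\epsilon_N/2)$ and treat the distribution $\pi_{T_0}$ of $X_{T_0}$ as the new initial distribution for the segment $X_{T_0+1},\ldots,X_N$. Since $\tilde T_f \geq T_f$ pointwise and $T_f$ is monotone nonincreasing in its argument by definition, we have $T_0 \geq T_f(\epsilon_N/2)$, so the definition of $T_f$ gives $d_f(\pi_{T_0},\pi) \leq \epsilon_N/2$. This verifies the first hypothesis of Theorem~\ref{thm:hoeffding-eps2} for the post burn-in chain. The second hypothesis, $N-T_0 \geq T_f(\epsilon_N/2)$, is automatic from $\tilde r_N(\epsilon_N)\geq 8\log(2/\alpha)>0$, since that forces $N > \tilde T_f(\epsilon_N/2)\geq T_f(\epsilon_N/2)$; actually one needs $N-T_0\geq T_f(\epsilon_N/2)$, which follows once we observe $r_N(\epsilon_N) \geq \tilde r_N(\epsilon_N) \geq 8\log(2/\alpha)$ and use the definition of $r_N$ after restricting to the post burn-in window.

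Next I would apply Theorem~\ref{thm:hoeffding-eps2} to the sample $X_{T_0+1},\ldots,X_N$ of length $N-T_0$, obtaining
\begin{align*}
\P\Big[\tfrac{1}{N-T_0}\sum_{n=T_0+1}^{N} f(X_n) \geq \mu+\epsilon_N\Big]
&\leq \exp\!\Big(-\tfrac{\epsilon_N^2 (N-T_0)}{8 T_f(\epsilon_N/2)}\Big) \\
&\leq \exp\!\Big(-\tfrac{\epsilon_N^2 (N-\tilde T_f(\epsilon_N/2))}{8\,\tilde T_f(\epsilon_N/2)}\Big)
= \exp\!\big(-\tilde r_N(\epsilon_N)/8\big),
\end{align*}
where the second inequality uses $\tilde T_f\geq T_f$ in the denominator (which only weakens the bound). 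By symmetry—applying the same argument to $1-f$, which has the same $f$-mixing time—the same bound holds for the lower tail. A union bound and the hypothesis $\tilde r_N(\epsilon_N)\geq 8\log(2/\alpha)$ then gives total failure probability at most $2\cdot\exp(-\log(2/\alpha))=\alpha$, which is precisely the statement that $\FUNCINT$ is a $1-\alpha$ confidence interval.

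The main obstacle, such as it is, is not really computational but notational: making sure the window of averaging, the burn-in, and the two invocations of the $\tilde T_f \geq T_f$ monotonicity (once for the discrepancy hypothesis, once for the exponent) are tracked consistently. Once that is set up, the proof reduces to a direct call to Theorem~\ref{thm:hoeffding-eps2} plus a two-sided union bound, so I would expect the argument to occupy only a few lines beyond the setup.
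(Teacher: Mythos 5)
Your proof is correct and follows essentially the same route as the paper's: apply Theorem~\ref{thm:hoeffding-eps2} to the post-burn-in segment of length $N - \tilde{T}_{f}(\epsilon_{N}/2)$, use $\tilde{T}_{f} \geq T_{f}$ to pass from $r_{N}$ to $\tilde{r}_{N}$, and union-bound the two tails. If anything, your constant bookkeeping is the more careful of the two, since the paper's displayed exponent carries a $4$ where the definition of $\tilde{r}_{N}$ and Theorem~\ref{thm:hoeffding-eps2} call for an $8$.
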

\begin{proof}
For notational economy, let us introduce the shorthands
$\tau_{f}(\epsilon) = \Tf\big(\frac{\epsilon}{2}\big)$ and
$\tilde{\tau}_{f}\big(\epsilon\big) =
\tilde{T}_{f}\big(\frac{\epsilon}{2}\big)$. Theorem~\ref{thm:hoeffding-eps2}
then implies
\begin{align*}
\P \left[ \frac{1}{N - \tilde{\tau}_{f}}\sum_{n =
    \tilde{\tau}_{f}}^{N} f\big(X_n\big) \geq \mu + \epsilon \right] &
\leq \exp\big(-\frac{N - \tau_{f}}{4\tau_{f}} \cdot \epsilon^2 \big)
\\
& \leq \exp\big(-\frac{N - \tilde{\tau}_{f}}{4\tilde{\tau}_{f}}
\cdot \epsilon^2 \big) \\ & =
\exp\big(-\frac{\tilde{r}_{N}\big(\epsilon\big)}{4}\big) .
\end{align*}
Setting $\epsilon = \epsilon_{N}$ yields
\begin{align*}
\P \left[ \frac{1}{N - \tilde{\tau}_{f}}\sum_{n =
    \tilde{\tau}_{f}}^{N} f\big(X_n\big) \geq \mu + \epsilon_{N}
  \right] & \leq \frac{\alpha}{2}.
\end{align*}
The corresponding lower bound leads to an analogous bound on the lower
tail.
\end{proof}

As we did with Corollary~\ref{cor:confidence-adaptive-hoeffding-concrete}, we can
derive a more concrete, though slightly weaker, form of this result
that is more amenable to interpretation. We derive the corollary from the
specialized bound by setting $\tilde{T}_{f} = T_{f}$.

To obtain this bound, define the
following lower bound, in parallel with
equation~\eqref{eq:rN-eta-def}:
\begin{align*}
\tilde{r}_{N}\big(\epsilon\big) \geq
\tilde{r}_{N,\eta}\big(\epsilon\big) \mydefn \epsilon^2
\big[\frac{N}{\tilde{T}_{f}\big(\frac{\eta}{2}\big)} - 1\big],~
\epsilon \geq \eta .
\end{align*}
Since this is a lower bound, we see that whenever $\epsilon_{N} \geq
\eta$ and $\tilde{r}_{N,\eta}\big(\epsilon_{N}\big) \geq 8 \log\frac2
    {\alpha}$, $\epsilon_{N}$ is a valid half-width for a $\big(1 -
    \alpha\big)$-confidence interval for the stationary mean centered
    at the empirical mean.  More formally, we have the following:
\begin{prop}
\label{prop:confidence-adaptive-hoeffding-concrete-bdd} 
Fix $\eta > 0$ and let
\begin{align*}
\epsilon_{N} = \tilde{r}_{N,\eta}^{-1}\big(8 \log \frac2 {\alpha}\big)
& = 2\sqrt{2}\sqrt{\frac{\tilde{T}_{f}\big(\frac{\eta}{2}\big) \cdot
    \log\big (2/ \alpha \big)}{N - \tilde{T}_{f} \big( \frac{
      \eta}{2} \big)}} .
\end{align*}
If $N \geq \tilde{T}_{f}\big(\frac{\eta}{2}\big)$, then $\FUNCINT$
is a $1 - \alpha$ confidence interval for $\mu = \E_{\pi}\big[f\big]$.
\end{prop}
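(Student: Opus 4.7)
The plan is to obtain this corollary as a direct specialization of Proposition~\ref{prop:confidence-adaptive-hoeffding-bdd}. That proposition gives a $(1-\alpha)$ confidence interval as soon as the chosen half-width $\epsilon_{N}$ lies in $\tilde{r}_{N}^{-1}\big([8\log(2/\alpha),~\infty)\big)$, so the task reduces to verifying $\tilde{r}_{N}(\epsilon_{N}) \geq 8 \log(2/\alpha)$ for the closed-form $\epsilon_{N}$ displayed in the statement.

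First, I would record the monotonicity: since $\tilde{T}_{f}$ is non-increasing in its argument, $\tilde{T}_{f}(\epsilon/2) \leq \tilde{T}_{f}(\eta/2)$ for every $\epsilon \geq \eta$, which yields
\begin{align*}
\tilde{r}_{N}(\epsilon) \;=\; \epsilon^{2}\Big[\tfrac{N}{\tilde{T}_{f}(\epsilon/2)} - 1\Big] \;\geq\; \epsilon^{2}\Big[\tfrac{N}{\tilde{T}_{f}(\eta/2)} - 1\Big] \;=\; \tilde{r}_{N,\eta}(\epsilon).
\end{align*}
Second, solving the explicit quadratic equation $\tilde{r}_{N,\eta}(\epsilon) = 8 \log(2/\alpha)$ for $\epsilon$ recovers precisely the closed form $\epsilon_{N} = 2\sqrt{2}\sqrt{\tilde{T}_{f}(\eta/2)\log(2/\alpha) / [N - \tilde{T}_{f}(\eta/2)]}$. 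The hypothesis $N \geq \tilde{T}_{f}(\eta/2)$ ensures this quantity is well defined (with the convention that $\epsilon_{N} = \infty$ if $N = \tilde{T}_{f}(\eta/2)$, in which case the interval is vacuous and trivially valid).

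Combining these, as long as $\epsilon_{N} \geq \eta$ the monotonicity gives $\tilde{r}_{N}(\epsilon_{N}) \geq \tilde{r}_{N,\eta}(\epsilon_{N}) = 8 \log(2/\alpha)$, so Proposition~\ref{prop:confidence-adaptive-hoeffding-bdd} applies directly and the interval $\FUNCINT$ is a valid $1 - \alpha$ confidence set for $\mu$. The main obstacle, and really the only subtlety, is that the lower bound $\tilde{r}_{N,\eta} \leq \tilde{r}_{N}$ is valid only on the range $\epsilon \geq \eta$; the side condition $\epsilon_{N} \geq \eta$ needs to be invoked (as the surrounding discussion in the paper makes explicit) or absorbed into the choice of $\eta$. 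One can view $\eta$ as a free tuning parameter — taking $\eta$ no larger than the eventual half-width $\epsilon_{N}$ (which shrinks as $N$ grows and $\alpha$ decreases) makes the condition automatic and preserves the interpretation that $\tilde{T}_{f}(\eta/2)$ is the relevant effective burn-in/mixing proxy.
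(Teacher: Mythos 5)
Your proposal is correct and follows essentially the same route as the paper: reduce to Proposition~\ref{prop:confidence-adaptive-hoeffding-bdd} via the monotonicity $\tilde{T}_{f}(\epsilon_{N}/2) \leq \tilde{T}_{f}(\eta/2)$ when $\epsilon_{N} \geq \eta$, so that $\tilde{r}_{N}(\epsilon_{N}) \geq \tilde{r}_{N,\eta}(\epsilon_{N}) = 8\log(2/\alpha)$. Your explicit flagging of the side condition $\epsilon_{N} \geq \eta$ matches the paper, which simply invokes it ``by assumption,'' and you state the monotonicity in the correct direction (the paper's proof contains a sign typo, writing $\tilde{T}_{f}(\epsilon_{N}/2) \geq \tilde{T}_{f}(\eta/2)$ while using the reverse inequality).
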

\begin{proof}
By assumption, we have
\begin{align*}
 \eta \leq \epsilon_{N}\big(\eta\big) = 2
 \sqrt{\frac{\tilde{T}_{f}\big(\frac{\eta}{2}\big) \cdot
     \log\big(2/\alpha\big)}{N - \tilde{T}_{f}\big(\frac{\eta
     }{2}\big)}}.
\end{align*}
This implies $\tilde{T}_{f}\big(\frac{\epsilon_{N}}{2}\big) \geq
\tilde{T}_{f}\big(\frac{\eta}{2}\big)$, which yields
\begin{align*}
\tilde{r}_{N} \big(\epsilon_{N}\big) = \epsilon_{N}^2
\big[\frac{N}{\tilde{T}_{f}\big(\frac{\epsilon_{N}}{2}\big)} -
  1\big] \geq \epsilon_{N}^2
\big[\frac{N}{\tilde{T}_{f}\big(\frac{\eta}{2}\big)} - 1\big] =
8 \log\big(2/\alpha\big).
\end{align*}
But now Proposition~\ref{prop:confidence-adaptive-hoeffding-bdd} applies,
so that we are done.
\end{proof}


\subsection{Proofs of sequential testing results}
\label{subsec:proofs-testing}

In this section, we collect various proofs associated with our
analysis of the sequential testing problem.


\subsubsection{Proof of Theorem~\ref{thm:seq-err-all-adapt} for $\algfix$}

We provide a detailed proof when $H_1$ is true, in which case we have
$\mu \leq r - \delta$; the proof for the other case is analogous.
When $H_1$ is true, we need to control the probability
$\P\big(\algfix\big(X_{1:N}\big) = H_{0}\big)$.  In order to do so,
note that Theorem~\ref{thm:hoeffding-eps2} implies that
\begin{align*}
\P\big(\algfix\big(X_{1:N}\big) = H_{0}\big) & =
\P\big(\frac{1}{N}\sum_{n = 1}^{N} f\big(X_{n}\big) \geq r +
\delta\big) \\ & \leq \P\big(\frac{1}{N}\sum_{n = 1}^{N}
f\big(X_{n}\big) \geq \mu + 2\delta\big) \\ & \leq
\exp\big(-\frac{\delta^2 N}{2\Tf\big(\delta\big)}\big).
\end{align*}
Setting $N = \frac{2T_{f}\left(\delta\right)
  \log\big(\frac{1}{\alpha}\big)}{\delta^2 }$ yields the bound $\P
\big(\algfix\big(X_{1:N}\big) = H_{0}\big) \leq \alpha$, as claimed.


\subsubsection{Proof of Theorem \ref{thm:seq-err-all-adapt} for $\algseq$}

The proof is nearly identical to that given by~\cite{Gyo15Test}, with
$\Tf\big(\delta/2\big)$ replacing $\frac{1}{\gamma_{0}}$. We
again assume that $H_{1}$ holds, so $\mu \leq r - \delta$. In this
case, it is certainly true that
\begin{align*}
\err\big(\algseq,~f\big) & = \P\big(\exists k \colon
\algseq\big(X_{1:N_{k}}\big) = H_{0}\big) \\ 
& = \P\big(\exists k \colon \frac{1}{N_{k}}\sum_{n = 1}^{N_{k}}
f\big(X_{n}\big) \geq r + \frac{M}{N_{k}}\big) \\ 
& \leq \sum_{k = 1}^{\infty} \P\big(\frac{1}{N_{k}}\sum_{n = 1}^{N_k}
f\big(X_{n}\big) \geq r + \frac{M}{N_{k}}\big) .
\end{align*}
It follows by Theorem \ref{thm:hoeffding-eps2}, with $\epsilon_{k} =
\delta + \frac{M}{N_{k}}$, that
\begin{align*}
\P\big(\frac{1}{N_{k}}\sum_{n = 1}^{N_k} f\big(X_{n}\big) \geq r +
\frac{M}{N_{k}}\big) & \leq \P\big(\frac{1}{N_{k}}\sum_{n = 1}^{N_k}
f\big(X_{n}\big) \geq \mu + \delta + \frac{M}{N_{k}}\big) \\ & \leq
\exp\big(-\frac{\epsilon_{k}^2 N_{k}}{8\Tf\big(\frac{\epsilon_{k}
  }{2}\big)}\big) \\ & \leq \exp\big(-\frac{\epsilon_{k}^2
  N_{k}}{8\Tf\big(\frac{\delta}{2}\big)}\big).
\end{align*}

In order to simplify notation, for the remainder of the proof, we
define \mbox{$\tau \mydefn 4\Tf(\delta/2)$,} \mbox{$\beta \mydefn
  \frac{\sqrt{\alpha\xi}}{2}$,} and \mbox{$\zeta_{k} \mydefn
  \frac{\delta^2 N_{k}}{2 \tau \log(1/\beta)}$.}  In terms of this
notation, we have $M = \frac{2\tau \log(1/\beta)}{\delta}$, and hence
that
\begin{align*}
\exp\big(-\frac{\epsilon_{k}^2 N_{k}}{2\tau}\big) & =
\exp\big(-\frac{1}{2\tau} \cdot \big(\delta^2 N_{k} + 2\delta M +
\frac{M^2 }{N_{k}}\big)\big) \\ & = \exp\big(-\big[\frac{\delta^2
    N_{k}}{2\tau} + \log\big(1/\beta\big) + \frac{2\tau \log^2
    \big(1/\beta\big)}{\delta^2 N_{k}}\big]\big) \\ & =
\exp\big(-\log\big(1/\beta\big)\big[1 + \zeta_{k} +
  \zeta_{k}^{-1}\big]\big) \\ & = \beta \cdot
\exp\big(-\log\big(1/\beta\big)\big[\zeta_{k} +
  \zeta_{k}^{-1}\big]\big) .
\end{align*}
It follows that the error probability is at most
\begin{align*}
 \beta\sum_{k = 1}^{\infty}
 \exp\big(-\log\big(1/\beta\big)\big[\zeta_{k} +
   \zeta_{k}^{-1}\big]\big).
\end{align*}
We now finish the proof using two small technical lemmas, whose proofs
we defer to Appendix~\ref{app:proofs-proofs-testing}.

\begin{lem}
\label{lem:seq-seq-indiff-adapt-sum} In the above notation, we have
\begin{align*}
\sum_{k = 1}^{\infty} \exp \Big \{ -\log (1/\beta) \big[\zeta_{k} +
  \zeta_{k}^{-1}\big] \Big \}  & \leq 4\sum_{\ell = 0}^{\infty} \exp
\Big \{ -\log(1/\beta) \Big[\big(1 + \xi\big)^{\ell} + \big(1 +
  \xi\big)^{-\ell} \Big] \Big \}.
\end{align*}
\end{lem}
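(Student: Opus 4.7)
The plan is to reduce the left-hand side sum to two copies of a one-sided geometric tail and then use the symmetry of the right-hand side. The key observation is that $\phi(x) := x + 1/x$ is symmetric under $x \leftrightarrow 1/x$, strictly convex, and minimized at $x = 1$. Since $\zeta_k$ is increasing in $k$ with $\zeta_k \to \infty$ (as $N_k$ grows geometrically), there is a unique ``crossing index'' where $\zeta_k$ moves from $<1$ to $\geq 1$. This is the natural pivot around which to compare the LHS sum to the symmetric RHS sum.

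First I would introduce $k^\ast \mydefn \min\{k \geq 1 : \zeta_k \geq 1\}$ (with the convention that the $k < k^\ast$ portion is empty if no such crossing occurs before $k = 1$). Splitting the LHS,
\begin{align*}
\sum_{k=1}^\infty e^{-L[\zeta_k + \zeta_k^{-1}]} = \sum_{k \geq k^\ast} e^{-L[\zeta_k + \zeta_k^{-1}]} + \sum_{1 \leq k < k^\ast} e^{-L[\zeta_k + \zeta_k^{-1}]},
\end{align*}
with $L = \log(1/\beta)$. On the upper piece, $\zeta_k \geq 1$ gives $\zeta_k + \zeta_k^{-1} \geq \zeta_k$, while on the lower piece $\zeta_k < 1$ gives $\zeta_k + \zeta_k^{-1} \geq \zeta_k^{-1}$. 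So each piece is dominated by a one-sided geometric tail in the variable that exceeds $1$.

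Next I would quantify the geometric growth precisely. Since $N_k = \lfloor N_0 (1+\xi)^k \rfloor$ and $\zeta_k = \delta N_k/M$, we have $\zeta_k/\zeta_{k^\ast} = N_k/N_{k^\ast}$ for $k \geq k^\ast$, and the floor loses at most a factor of $2$ once $N_0(1+\xi)^k \geq 2$; thus $\zeta_k \geq \tfrac{1}{2}(1+\xi)^{k-k^\ast}$ for $k \geq k^\ast$ (with a handful of small-$k$ exceptions absorbed into the eventual constant). A symmetric argument, using $1/\zeta_k = M/(\delta N_k)$ and $N_k \leq N_0 (1+\xi)^k$, yields $\zeta_k^{-1} \geq \tfrac{1}{2}(1+\xi)^{k^\ast - k}$ for $k < k^\ast$. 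Reindexing $\ell = k - k^\ast$ in the upper piece and $\ell = k^\ast - k$ in the lower piece, each half is then bounded by
\begin{align*}
\sum_{\ell \geq 0} \exp\bigl\{- L \cdot \tfrac{1}{2}(1+\xi)^\ell\bigr\},
\end{align*}
which, after absorbing the factor $\tfrac{1}{2}$ by shifting the index by one (and using $(1+\xi)^\ell + (1+\xi)^{-\ell} \geq (1+\xi)^\ell$ together with the boundedness of $L$), is at most twice $\sum_{\ell \geq 0} \exp\{-L[(1+\xi)^\ell + (1+\xi)^{-\ell}]\}$. Summing the two halves gives the factor of $4$.

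The main technical obstacle is tracking the loss from the floor function in $N_k = \lfloor N_0 (1+\xi)^k \rfloor$ cleanly enough that the residual only affects constants and not the exponential structure; this is what forces the factor $4$ rather than $2$. A secondary bookkeeping issue is ruling out edge cases where $k^\ast$ might be $0$ or $\infty$, but the hypothesis $N_0 \geq M \min(1/r, 1/(1-r))$ combined with $\delta \leq \min(r, 1-r)$ implies $\zeta_0 \leq 1 \leq \zeta_k$ for sufficiently large $k$, so the crossing always exists and the split is nontrivial.
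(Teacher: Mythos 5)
Your overall strategy differs from the paper's in a way that turns out to be fatal: both of your lossy steps weaken the \emph{exponent}, and losses inside the exponent cannot be absorbed into the multiplicative constant $4$. Concretely, consider the first term of your intermediate bound, $\exp\{-\tfrac{L}{2}(1+\xi)^{0}\} = \beta^{1/2}$ (with $L = \log(1/\beta)$). Every term of the target sum $\sum_{\ell \ge 0}\exp\{-L[(1+\xi)^{\ell}+(1+\xi)^{-\ell}]\}$ is at most $e^{-2L}=\beta^{2}$, and only $O(1/\xi)$ of them are non-negligible, so the target sum is $O(\beta^{2}/\xi)$. Since $\beta = \sqrt{\alpha\xi}/2$ can be arbitrarily small, $\beta^{1/2}$ is not bounded by any universal constant times $\beta^{2}/\xi$ (e.g.\ $\alpha=\xi=0.1$ gives $\beta^{1/2}\approx 0.22$ versus a target sum of order $0.03$). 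The damage comes from two places: (i) near the crossing index you discard one of $\zeta_k,\zeta_k^{-1}$, but there \emph{both} are $\approx 1$ and jointly supply the essential $e^{-2L}=\beta^{2}$ baseline that the right-hand side retains, so dropping one costs an unbounded factor $1/\beta$; and (ii) the factor $\tfrac12$ you introduce inside the exponent to handle the floor in $N_k=\lfloor N_0(1+\xi)^k\rfloor$ turns $e^{-L\zeta}$ into $e^{-L\zeta/2}$, a square-root rather than constant-factor loss, and your proposed fix of ``shifting the index by one'' does not work since absorbing a factor of $2$ requires a shift of $\log 2/\log(1+\xi)\approx (\log 2)/\xi$, which is unbounded as $\xi\to 0$.

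The paper's proof avoids both issues by never touching the exponent. It observes that $g(\zeta)=\exp\{-L(\zeta+\zeta^{-1})\}$ is increasing on $(0,1]$ and decreasing on $[1,\infty)$, and replaces each $\zeta_k$ by the power $(1+\xi)^{\ell_k}$ lying \emph{between} $\zeta_k$ and $1$, so that $g((1+\xi)^{\ell_k})\ge g(\zeta_k)$ exactly. The only losses are combinatorial and genuinely constant: each $\ell$ is hit by at most two values of $k$ (proved via $\zeta_{k+2}\ge(1+\xi)\zeta_k$, which is where the floor function is handled without degrading the exponent), and the two-sided sum over $\ell\in\mathbb{Z}$ folds onto $\ell\ge 0$ by the symmetry $g((1+\xi)^{-\ell})=g((1+\xi)^{\ell})$, yielding the factor $2\times 2=4$. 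If you want to keep your crossing-index decomposition, you would at minimum need to retain both terms $\zeta_k+\zeta_k^{-1}$ in the exponent and compare $\zeta_k$ to a power of $(1+\xi)$ on the correct side of $1$ rather than introducing a multiplicative slack of $\tfrac12$; at that point you have essentially reconstructed the paper's argument.
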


\begin{lem}
\label{lem:seq-seq-indiff-adapt-group} 
For any integer $c \geq 0$, we have
\begin{align*}
 (1 + \xi)^{\ell} + (1 + \xi)^{-\ell} & \geq 2 ( c + 1 ) \quad
  \mbox{for all $\ell \in \Big[\frac{9c}{5\xi}, \frac{9 (c + 1)}{5\xi}
      \Big)$.}
\end{align*}
\end{lem}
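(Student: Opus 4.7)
The plan is to reduce the sum $(1+\xi)^\ell + (1+\xi)^{-\ell}$ to a single-term lower bound, exploit the constraint $\xi < 2/5$ to linearize $\log(1+\xi)$, and then verify a clean numerical inequality in $c$.

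First I would handle the base case $c = 0$ separately. Here $\ell \in [0, 9/(5\xi))$ and we need the sum to be at least $2$, which is immediate from the AM--GM bound $a + a^{-1} \geq 2$ applied to $a = (1+\xi)^\ell > 0$.

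For $c \geq 1$, since $(1+\xi)^{-\ell} > 0$, it suffices to prove the stronger one-sided inequality $(1+\xi)^\ell \geq 2(c+1)$, i.e., $\ell \log(1+\xi) \geq \log(2(c+1))$. The key quantitative input is the elementary bound $\log(1+\xi) \geq \xi - \xi^2/2$ for $\xi > 0$ (the Taylor series is alternating with decreasing terms for $\xi < 1$), which together with $\xi < 2/5$ gives $\log(1+\xi) \geq \xi(1 - \xi/2) \geq \tfrac{4\xi}{5}$. Substituting the hypothesis $\ell \geq 9c/(5\xi)$ thus reduces the task to verifying the purely numerical inequality
\begin{align*}
\frac{9c}{5\xi} \cdot \frac{4\xi}{5} \;=\; \frac{36 c}{25} \;\geq\; \log\bigl(2(c+1)\bigr) \qquad \text{for all integers } c \geq 1.
\end{align*}

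Finally, I would close this out by a one-line monotonicity argument. At $c = 1$ the left side is $36/25 = 1.44$ while the right side is $\log 4 \approx 1.386$, so the inequality holds with a small slack. Differentiating (or differencing in $c$), the left side grows linearly with slope $36/25$ while the right grows like $\log c$ with derivative $1/(c+1) \leq 1/2$, so the gap is increasing in $c$ and the inequality propagates to all $c \geq 1$. The only part that is not entirely routine is confirming the base case $c = 1$ numerically; the slack there is small, which explains the specific constant $9/5$ in the statement and is the reason why the hypothesis $\xi < 2/5$ (rather than some larger bound) is being used.
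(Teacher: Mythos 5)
Your proof is correct and follows essentially the same route as the paper's: both reduce the claim to the one-sided bound $(1+\xi)^{\ell} \geq 2(c+1)$, use $\log(1+\xi) \geq \xi - \xi^2/2 \geq \tfrac{4\xi}{5}$ for $\xi \leq \tfrac{2}{5}$, and verify $\tfrac{36c}{25} \geq \log\bigl(2(c+1)\bigr)$ by checking $c=1$ and a monotonicity argument. The only (harmless) difference is that you discard the positive term $(1+\xi)^{-\ell}$ directly, whereas the paper reaches the same reduction by bounding the larger root $c+1+\sqrt{c(c+2)} \leq 2(c+1)$ of the associated quadratic.
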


Using this bound, and grouping together terms in blocks of size
$\frac{9}{5\xi}$, we find that the error is at most
\begin{align*}
4 \sum_{\ell = 0}^{\infty} \exp\big(-\log\big(1/\beta\big)\big[\big(1
  + \xi\big)^{\ell} + \big(1 + \xi\big)^{-\ell}\big]\big) & \leq
\frac{36}{5\xi} \cdot \sum_{c = 0}^{\infty} \beta^{2\big(c + 1\big)}.
\end{align*}
Since both $\alpha$ and $\xi$ are at most $\frac{2}{5}$, we have
$\beta = \frac{\sqrt{\alpha \xi}}{2} \leq \frac{1}{5}$, and hence the
error probability is bounded as
\begin{align*}
\frac{36\beta}{5\xi}\sum_{c = 0}^{\infty} \beta^{2\big(c + 1\big)}
\leq \frac{36\beta^{3}}{5\xi\big(1 - \beta^2 \big)} \leq
\frac{36\beta^2 }{25\xi\big(1 - \beta^2 \big)}\leq \frac{3\beta^2
}{2\xi} = \frac{3\alpha}{4} < \alpha .  
\end{align*}


\subsubsection{Proof of Theorem~\ref{thm:seq-err-all-adapt} for $\algdiff$}

We may assume that $H_{1}$ holds, as the other case is
analogous. Under $H_{1}$, letting $k_{0}$ be the smallest $k$ such
that $\epsilon_{k} < \infty$, we have
\begin{align*}
\err (\algdiff,~f) & \leq \sum_{k = k_{0}}^{\infty}
\P\left(\hat{\mu}_{N_k} \geq r + \epsilon_{k}\right) \; \leq \;
\sum_{k = k_{0}}^{\infty} \P(\hat{\mu}_{N_k} \geq \mu +
2\epsilon_{k}).
\end{align*}
By Theorem~\ref{thm:hoeffding-eps2}, and the definition of
$\epsilon_{k}$, we thus have
\begin{align*}
\err \left(\algdiff,~f\right) \leq \sum_{k = k_0}^{\infty}
\exp\left(-\frac{N_{k}\epsilon_{k}^{2}}{8T_{f}\left(\frac{\epsilon_{k}}{2}\right)}\right)
& \leq \frac{\alpha}{2}\sum_{k = k_0}^{\infty} \frac{1}{k^2} \\ & =
\frac{\pi^{2}}{12} \, \alpha \; < \; \alpha,
\end{align*}
as claimed.


\subsubsection{Proof of Theorem~\ref{thm:seq-stopping-time-all} for $\algseq$}

We may assume $H_{1}$ holds; the other case is analogous. Note that
\begin{align*}
\E[N] & \leq N_{1} + \sum_{k = 1}^{\infty} \big(N_{k + 1} -
N_{k}\big)\P\big(N > N_{k}\big) \\
& \leq N_{1} + \sum_{k = 1}^{\infty} \big(N_{k + 1} -
N_{k}\big)\P\big(\frac{1}{N_{k}}\sum_{n = 1}^{N_{k}} f\big(X_{n}\big)
\in \big(r - \frac{M}{N_{k}},~r + \frac{M}{N_{k}}\big)\big) \\
& \leq N_{1} + \sum_{k = 1}^{\infty} \big(N_{k + 1} -
N_{k}\big)\P\big(\frac{1}{N_{k}}\sum_{n = 1}^{N_{k}} f\big(X_{n}\big)
> r - \frac{M}{N_{k}}\big) \\
& = N_{1} + \sum_{k = 1}^{\infty} \big(N_{k + 1} -
N_{k}\big)\P\big(\frac{1}{N_{k}}\sum_{n = 1}^{N_{k}} f\big(X_{n}\big)
> \mu + \Delta - \frac{M}{N_{k}}\big) \\
& \leq N_{1} + \sum_{k = 1}^{\infty} \big( N_{k + 1} - N_{k} \big)
\exp \left \{ -\frac{\big(\Delta N_{k} - M\big)_{+}^2
}{8\Tf\big(\delta/2\big)N_{k}} \right \} .
\end{align*}
Our proof depends on the following simple technical lemma, whose proof
we defer to Appendix~\ref{subapp:proof-seq-seq-stopping-time-sum}.

\begin{lem}
\label{lem:seq-seq-stopping-time-sum} 
Under the conditions of Theorem~\ref{thm:seq-stopping-time-all}, we
have
\begin{align}
 \sum_{k = 1}^{\infty} \big(N_{k + 1} - N_{k}\big) \exp \left \{
 -\frac{\big(\Delta N_{k} - M\big)_{+}^2
 }{8\Tf\big(\delta/2\big)N_{k}} \right \} & \leq \big ( 1 + \xi
 \big) \big [ 1 + \int_{N_{1}}^{\infty} h(s) \der s\big],
\end{align}
where $h(s) \mydefn \exp \Big \{ -\frac {(\Delta s -
  M)_{+}^2}{8\Tf(\delta/2) s} \Big \}$.
\end{lem}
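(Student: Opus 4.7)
The plan is to prove the inequality by a Riemann-sum-to-integral comparison that exploits two features of the setup: the monotonicity of $h$ and the geometric growth of the deterministic sequence $N_k = \lfloor N_0 (1+\xi)^k \rfloor$. The first step is to verify that $h$ is non-increasing on $[0,\infty)$. On $[0, M/\Delta]$ the exponent vanishes and $h \equiv 1$. On $[M/\Delta, \infty)$, expanding $(\Delta s - M)^2/s = \Delta^2 s - 2\Delta M + M^2/s$ and differentiating gives derivative $\Delta^2 - M^2/s^2 \geq 0$, so the exponent is non-decreasing in $s$ and $h$ is non-increasing.

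Next I would derive the dilation inequality $N_{k+1} \leq (1+\xi)(N_k + 1)$ directly from the definition: using $N_{k+1} \leq N_0 (1+\xi)^{k+1}$ together with $N_k + 1 \geq N_0 (1+\xi)^k$, we get $(1+\xi)(N_k+1) \geq N_0(1+\xi)^{k+1} \geq N_{k+1}$. Rearranging, for every $s \in [N_k, N_{k+1}]$ the point $s/(1+\xi) - 1$ lies to the left of $N_k$, so monotonicity of $h$ gives
\begin{align*}
h(N_k) \;\leq\; h\!\left(\left(\tfrac{s}{1+\xi} - 1\right)_{\!+}\right), \qquad s \in [N_k,\, N_{k+1}].
\end{align*}

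Integrating this pointwise inequality over $[N_k, N_{k+1}]$, summing over $k \geq 1$, and then performing the substitution $u = s/(1+\xi) - 1$ with $\der s = (1+\xi)\,\der u$ yields
\begin{align*}
\sum_{k=1}^{\infty} (N_{k+1} - N_k)\, h(N_k) \;\leq\; (1+\xi) \int_{N_1/(1+\xi) - 1}^{\infty} h(u_+)\, \der u .
\end{align*}
Splitting this integral at $u = N_1$ gives the advertised tail $(1+\xi)\int_{N_1}^\infty h(u)\,\der u$ plus a residual integral over $[N_1/(1+\xi) - 1,\, N_1]$ on which $h \leq 1$; the factor $(1+\xi) \cdot 1$ in the claimed upper bound is designed to absorb this residual.

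The main obstacle is precisely this last bookkeeping step. The floor function introduces an additive slack of order one, and after the change of variables the residual interval has length $\xi N_1/(1+\xi) + 1$, which is not uniformly bounded by $1$ from the geometric structure alone. Closing the gap to match the exact $(1+\xi)[1 + \int_{N_1}^\infty h]$ form requires either leveraging the specific formula for $N_0$ in \eqref{eq:seq-seq-N0} and the choice of $M$ from Theorem~\ref{thm:seq-err-all-adapt} (which ties $M$ and $\xi$ to $\Delta$ and $T_f$ in a way that controls $\xi N_1$), or peeling off the $k=1$ term separately as $(N_2 - N_1)\,h(N_1) \leq N_2 - N_1$ and applying a sharper comparison to the tail sum $k \geq 2$, where the floor-induced slack is dominated by the already-growing gaps $N_k - N_{k-1}$.
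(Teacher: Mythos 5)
Your reduction to a Riemann--integral comparison is the right general strategy, and the dilation step is clean: $N_{k+1} \leq (1+\xi)(N_k+1)$ plus monotonicity of $h$ does give $(N_{k+1}-N_k)h(N_k) \leq \int_{N_k}^{N_{k+1}} h\big((s/(1+\xi)-1)_+\big)\,\der s$, and your handling of the decreasing tail is fine. But the gap you flag at the end is genuine, and neither of your proposed repairs closes it. The residual $(1+\xi)\int_{N_1/(1+\xi)-1}^{N_1} h(u_+)\,\der u$ is of size $\xi N_1 + 1 + \xi$, and $\xi N_1$ is not $O(1)$: from the definition~\eqref{eq:seq-seq-N0} and the choice of $M$ in Theorem~\ref{thm:seq-err-all-adapt}, $N_1$ scales like $M \asymp \Tf(\delta/2)\log(2/\sqrt{\alpha\xi})/\delta$, so no relation among $\xi$, $\Delta$, and $\Tf$ makes $\xi N_1$ bounded by a constant. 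Peeling off the $k=1$ term separately does not help either, since the bound $(N_2-N_1)h(N_1) \leq N_2-N_1 \leq \xi N_1 + 1 + \xi$ reproduces exactly the same excess.

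The missing idea is to exploit the flat part of $h$ rather than applying the dilation uniformly. The paper splits the sum at $k_0 \mydefn \min\{k \geq 1 : \Delta N_k \geq M\}$. For $k \leq k_0-2$ one has $h \equiv 1$ on all of $[N_k, N_{k+1}]$, so the summand \emph{equals} $\int_{N_k}^{N_{k+1}} h(s)\,\der s$ --- no dilation factor and no boundary loss --- and these terms telescope to $\int_{N_1}^{N_{k_0-1}} h = N_{k_0-1}-N_1$. The entire additive slack is thereby concentrated in the single transition term $k = k_0-1$, which is at most $N_{k_0}-N_{k_0-1} \leq \xi N_{k_0-1} + 1 + \xi$; the $\xi N_{k_0-1}$ piece is then absorbed into $\xi\int_{N_1}^{\infty} h$, precisely because $h \equiv 1$ on $[N_1, M/\Delta] \supset [N_1, N_{k_0-1}]$ makes that integral at least $N_{k_0-1}-N_1$. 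The decreasing tail $k \geq k_0$ is compared to $\int_{N_{k_0}}^{\infty} h$ (and here your left-endpoint-versus-integral comparison via the dilation is actually the more careful way to justify that step, at the cost of the $(1+\xi)$ factor that the final bound already carries). In your version the whole flat head is charged both the multiplicative $(1+\xi)$ and the boundary interval of length $\xi N_1/(1+\xi)+1$ at height $1$, and it is exactly that boundary charge that overshoots the $(1+\xi)\cdot 1$ budget. Localizing the loss to the transition index as above repairs the argument.
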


Given this lemma, we then follow the argument of~\citet{Gyo15Test} in
order to bound the integral. We have
\begin{align*}
\int_{N_{1}}^{\infty} h\big(s\big) \der s \leq \frac4
    {\Delta}\sqrt{\frac{2\Tf\big(\delta/2\big)M}{\Delta} +
      8\Tf\big(\delta/2\big)}.
\end{align*}
To conclude, note that either $r \geq \Delta$ or $1 - r \geq \Delta$,
since $0 < \mu < 1$, so that $\min\big(\frac{1}{r},~\frac{1}{1 -
  r}\big) \leq \frac{1}{\Delta}$. It follows that
\begin{align*}
 N_{1} \leq \big(1 + \xi\big)N_{0} \leq \frac{(1 + \xi) M}{\Delta}.
\end{align*}
Combining the bounds yields the desired result.


\subsubsection{Proof of Theorem~\ref{thm:seq-stopping-time-all} for $\algdiff$}

For concreteness, we may assume $H_{1}$ holds, as the $H_{0}$ case is
symmetric. We now have that
\begin{align*}
\P \left[ N \geq N_{k} \right] & \leq \P \left[
  \big|\frac{1}{N_{k}}\sum_{n = 1}^{N_k} f\big(X_{n}\big) - r\big|
  \leq \epsilon_{k} \right] \; \leq \; \P \left[ \frac{1}{N_{k}}
  \sum_{n = 1}^{N_k} f\big(X_{n}\big) \geq \mu + \Delta - \epsilon_{k}
  \right].
\end{align*}
For convenience, let us introduce the shorthand
\begin{align*}
 T_{f,k}^{+} & \mydefn \begin{cases} T_{f}\big(\frac{\Delta - \epsilon_{k}
   }{2}\big) &~ \text{if}~ \epsilon_{k} \leq \Delta, \\ 1 &~
   \text{otherwise} .
\end{cases}
\end{align*}
Applying the Hoeffding bound from Theorem~\ref{thm:hoeffding-eps2}, we
then find that
\begin{align*}
\P \left[ N \geq N_{k} \right] & \leq \exp \left \{
-\frac{N_{k}}{8T_{f,k}^{+}} \cdot \big(\Delta -
\epsilon_{k}\big)_{+}^2 \right \}.
\end{align*}
Observe further that
\begin{align*}
\E [N] & = N_{1} + \sum_{k = 1}^{\infty} \big(N_{k + 1} -
N_{k}\big)\P\big(N > N_{k}\big) \\
& \leq N_{k_{0}^{\ast} + 1} + \sum_{k = k_{0}^{\ast} + 1}^{\infty}
\big(N_{k + 1} - N_{k}\big)\P\big(N > N_{k}\big) \\
& \leq \big(1 + \xi \big) \big(N_{0}^{\ast} + 1 \big ) + \sum_{k =
  k_{0}^{\ast} + 1}^{\infty} \big(N_{k + 1} - N_{k} \big) \P\big(N >
N_{k} \big).
\end{align*}
Combining the pieces yields
\begin{align}
\label{eq:ENstar-bd-1}
\E[N] & \leq \big(1 + \xi\big)\big(N_{0}^{\ast} + 1\big) + \sum_{k =
  k_{0}^{\ast} + 1}^{\infty} \big(N_{k + 1} -
N_{k}\big)\exp\big(-\frac{N_{k}}{8T_{f,k}^{+}} \cdot \big(\Delta -
\epsilon_{k}\big)_{+}^2 \big).
\end{align}

The crux of the proof is a bound on the infinite sum, which we pull
out as a lemma for clarity.

\begin{lem}
\label{lem:seq-diff-stopping-time-sum}
The infinite sum~\eqref{eq:ENstar-bd-1} is upper bounded by
\begin{align*} 
\sum_{k = k_{0}^{\ast} + 1}^{\infty} \big(N_{k + 1} -
N_{k}\big)\exp\big(-\frac{N_{k}}{8T_{f,k}^{+}} \cdot \big(\Delta -
\epsilon_{k}\big)_{+}^2 \big) \leq \alpha\cdot \sum_{m =
  1}^{\infty}\exp\big(-m \cdot \frac{\Delta^2
}{32T_{f}\big(\frac{\Delta}{4}\big)}\big).
\end{align*}
\end{lem}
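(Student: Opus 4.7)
The plan is to exploit two facts that hold for every index $k > k_0^\ast$. First, the definition of $k_0^\ast$ together with the monotonicity of $\epsilon_k$ in $k$ (which follows from the geometric growth of $N_k$ dominating the $\log k$ term in $\epsilon_k$'s defining inequality) gives $\epsilon_k \leq \Delta/2$. This in turn yields both $(\Delta - \epsilon_k)_+ \geq \Delta/2$ and, by the non-increasing nature of $T_f$, the pair of inequalities $T_{f,k}^{+} \leq T_f(\Delta/4)$ and $T_{f,k}^{+} \leq T_f(\epsilon_k/2)$. Second, because $\epsilon_k < \infty$ in this regime, the infimum in its definition is active and gives the reverse bound
\begin{align*}
\frac{N_k \epsilon_k^2}{8\, T_f(\epsilon_k/2)} \;\geq\; \log(1/\alpha) + 1 + 2\log k .
\end{align*}

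Next, I would use the pointwise identity $(\Delta - \epsilon_k)^2 = \epsilon_k^2 + \Delta(\Delta - 2\epsilon_k)$, whose second term is nonnegative since $\epsilon_k \leq \Delta/2$, to decompose the exponent as
\begin{align*}
\frac{N_k (\Delta - \epsilon_k)^2}{8\, T_{f,k}^{+}} \;\geq\; \frac{N_k \epsilon_k^2}{8\, T_f(\epsilon_k/2)} + \frac{N_k \Delta(\Delta - 2\epsilon_k)}{8\, T_f(\Delta/4)} \;\geq\; \log(1/\alpha) + 1 + 2 \log k + \frac{N_k \Delta(\Delta - 2\epsilon_k)}{8\, T_f(\Delta/4)} .
\end{align*}
Exponentiating bounds each summand by $(\alpha /( e k^2)) \cdot \exp\!\bigl(- N_k \Delta(\Delta - 2\epsilon_k)/(8 T_f(\Delta/4))\bigr)$. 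I would then combine this with the crude upper bound $N_{k+1} - N_k \leq \xi N_k + 1$ coming from $N_k = \lfloor N_0(1+\xi)^k\rfloor$, and reindex by $k \mapsto m_k := \lceil N_k \kappa \rceil$ with $\kappa = \Delta^2/(32\, T_f(\Delta/4))$. Because $N_k$ is geometric in $k$, each $m$ has only $O(1)$ preimages under this map, and the sum over $k$ collapses to a geometric series in $m$, yielding a bound of the form $\alpha \sum_m e^{-m\kappa}$ after absorbing the constants $e^{-1}$ and $k^{-2}$ into the exponential/polynomial decay.

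The main obstacle will be matching the precise form of the claimed bound — in particular, keeping a clean factor of $\alpha$ rather than $\alpha^{\theta}$ for some $\theta < 1$. The decomposition above is close to optimal on large $k$, where $\epsilon_k \ll \Delta/2$ forces the second-term exponent to exceed $N_k \kappa$ and the doubly-exponential decay in $k$ easily dominates the $k^{-2}$ factor. The delicate regime is the \emph{transitional} range just past $k_0^\ast$, where $\epsilon_k$ may still be close to $\Delta/2$ and the second exponent is small; in this range the $\alpha / (e k^2)$ factor alone must suffice to absorb the bounded-by-constant early terms of $\sum_m e^{-m\kappa}$. Handling this cleanly likely requires splitting the sum at an intermediate index (e.g., the smallest $k$ with $\epsilon_k \leq \Delta/4$) and bounding the two pieces separately, using the $1/k^2$ decay on the lower piece and the geometric reindexing on the upper piece.
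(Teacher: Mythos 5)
Your strategy has a genuine gap in exactly the regime you flag as ``delicate,'' and the fix you sketch does not repair it. After your decomposition, the term at index $k$ is bounded by $(N_{k+1}-N_k)\cdot\frac{\alpha}{ek^{2}}\cdot\exp\bigl(-N_k\Delta(\Delta-2\epsilon_k)/(8T_f(\Delta/4))\bigr)$. The increment $N_{k+1}-N_k$ grows geometrically (it is of order $\xi N_k$), while the surviving exponent $N_k\Delta(\Delta-2\epsilon_k)$ carries no decay whenever $\epsilon_k$ sits at or near $\Delta/2$ --- which can persist over an unboundedly long stretch of indices past $k_0^{\ast}$, since $\epsilon_k$ is defined through the fixed non-decreasing map $\epsilon\mapsto\epsilon^{2}/(8T_f(\epsilon/2))$, which may be nearly flat just below $\Delta/2$. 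In that stretch your bound degenerates to roughly $\xi N_k\alpha/(ek^{2})$, and the polynomial factor $k^{-2}$ cannot absorb the geometric growth of $N_k$; splitting at the first $k$ with $\epsilon_k\le\Delta/4$ leaves exactly this uncontrolled piece on the lower range. The root cause is that by invoking the defining inequality of $\epsilon_k$ at every $k$ you trade the portion $N_k\epsilon_k^{2}/(8T_f(\epsilon_k/2))$ of the exponent --- which is of order $N_k$ --- for the much weaker quantity $\log(1/\alpha)+1+2\log k$.

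The paper's proof avoids this by never using the defining inequality of $\epsilon_k$ term by term. It keeps the uniform lower bound on the exponent, $N_k(\Delta-\epsilon_k)_{+}^{2}/(8T_{f,k}^{+})\ge N_k\kappa$ with $\kappa \mydefn \Delta^{2}/(32T_f(\Delta/4))$, valid for all $k>k_0^{\ast}$ because $\Delta-\epsilon_k\ge\Delta/2$ and $T_{f,k}^{+}\le T_f(\Delta/4)$; this preserves geometric decay in $N_k$ for every term. It then compares $\sum_{k}(N_{k+1}-N_k)e^{-N_k\kappa}$ with $\sum_{n>N_0^{\ast}}e^{-n\kappa}=e^{-N_0^{\ast}\kappa}\sum_{m\ge1}e^{-m\kappa}$, and extracts the factor $\alpha$ exactly once, from the prefactor: since $\epsilon_{k_0^{\ast}}\le\Delta/2$, the defining inequality applied at the single index $k=k_0^{\ast}$ gives $N_0^{\ast}\kappa\ge N_0^{\ast}\epsilon_{k_0^{\ast}}^{2}/(8T_f(\epsilon_{k_0^{\ast}}/2))\ge\log(1/\alpha)$, i.e.\ $e^{-N_0^{\ast}\kappa}\le\alpha$. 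If you want to salvage your computation, drop the per-$k$ use of the defining inequality and make this single use at $k_0^{\ast}$ instead.
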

\noindent See Appendix~\ref{subapp:proof-seq-diff-stopping-time-sum}
for the proof of this claim.

Lemma~\ref{lem:seq-diff-stopping-time-sum} then implies that
\begin{align*}
\sum_{k = k_{0}^{\ast} + 1}^{\infty} \big(N_{k + 1} - N_{k}\big)
\exp\big(-\frac{N_{k}}{T_{f}\big(\frac{\Delta}{4}\big)} \cdot
\frac{\Delta^2}{32}\big) & \leq \alpha\cdot \sum_{m =
  1}^{\infty}\exp\big(-m \cdot \frac{\Delta^2
}{32T_{f}\big(\frac{\Delta}{4}\big)}\big) \\
& = \frac{\alpha \exp\big(-\frac{\Delta^2 }{32T_{f}
    \big(\frac{\Delta}{4}\big)}\big)}{1 -
  \exp\big(\frac{\Delta^2}{32T_{f}\big(\frac{\Delta
    }{4}\big)}\big)} \\ & \leq \frac{32\alpha
  T_{f}\big(\frac{\Delta}{4}\big)}{\Delta^2} .
\end{align*}
The claim now follows from equation~\eqref{eq:ENstar-bd-1}.


\section{Discussion}
\label{sec:discuss}

A significant obstacle to successful application of statistical
procedures based on Markov chains---especially MCMC---is the
possibility of slow mixing.  The usual formulation of mixing is in
terms of convergence in a distribution-level metric, such as the total
variation or Wasserstein distance. On the other hand, algorithms like
MCMC are often used to estimate equilibrium expectations over a
limited class of functions. For such uses, it is desirable to build a
theory of mixing times with respect to these limited classes of
functions and to provide convergence and concentration guarantees
analogous to those available in the classical setting, and our paper
has made some steps in this direction.

In particular, we introduced the $f$-mixing time of a function, and
showed that it can be characterized by the interaction between the
function and the eigenspaces of the transition operator. Using these
tools, we proved that the empirical averages of a function $f$
concentrate around their equilibrium values at a rate characterized by
the $f$-mixing time; in so doing, we replaced the worst-case
dependence on the spectral gap of the chain, characteristic of
previous Markov chain concentration bounds, by an adaptive dependence
on the properties of the actual target function.  Our methodology
yields sharper confidence intervals, as well as better rates for
sequential hypothesis tests in MCMC, and we have provided evidence
that the predictions made by our theory are accurate in some real
examples of MCMC and thus potentially of significant practical
relevance.

Our investigation also suggests a number of further questions.  Two important
ones concern the continuous and non-reversible cases. Both arise frequently in statistical
applications---for example, when sampling continuous parameters or when performing
Gibbs sampling with systematic scan---and are therefore of considerable interest. As uniform Hoeffding bounds 
do exist for the continuous case and, more recently, have been established for the non-reversible case, we 
believe many of our conclusions should carry over to these settings, although somewhat different methods of
analysis may be required.

Furthermore, in practical applications, it would be desirable to have methods for
estimating or bounding the $f$-mixing time based on samples.  It would
also be interesting to study the $f$-mixing times of Markov chains that
arise in probability theory, statistical physics, and applied
statistics itself. While we have shown what can be done with spectral
methods, the classical theory provides a much larger arsenal of
techniques, some of which may generalize to yield sharper $f$-mixing
time bounds. We leave these and other problems to future work.


\subsection*{Acknowledgments}
The authors thank Alan Sly and Roberto Oliveira for helpful discussions about the lower bounds and
the sharp function-specific Hoeffding bounds (respectively).
This work was partially supported by NSF grant CIF-31712-23800,
ONR-MURI grant DOD 002888, and AFOSR grant FA9550-14-1-0016. In
addition, MR is supported by an NSF Graduate Research Fellowship and a
Fannie and John Hertz Foundation Google Fellowship.


\bibliographystyle{plainnat} \bibliography{paper}

\begin{appendices}


\section{Proofs for Section~\ref{subsec:mixing-bounds}}
\label{sec:proofs-mix}

In this section, we gather the proofs of the mixing time bounds from
Section~\ref{subsec:mixing-bounds}, namely equations~\eqref{eq:mix-TV}
and~\eqref{eq:mix-gap} and Lemmas~\ref{lem:mix-gap-all}
and~\ref{lem:mix-gap-all-oracle}.


\subsection{Proof of the bound~\eqref{eq:mix-TV}}
\label{subsec:proof-mix-TV}

Recall that
\begin{align*}
d_{f}(p,~q) & = \sup_{f \colon [d] \rightarrow [0,~1]}
\left|\E_{p}\big[f\big(X\big)\big] - \E_{q}\big[ f(Y) \big] \right|.
\end{align*}
It follows from equation~\eqref{eq:mix-gap} that
\begin{align*}
d_{\TV} \big(\pi_{n},~\pi\big) & = \sup_{f \colon [d] \rightarrow
  [0,~1]} d_{f}\big(\pi_n,~\pi\big) \\
& \leq \sup_{f \colon [d] \rightarrow [0,~1]}
\big[\frac{\lambda_{f}^{n}}{\sqrt{\pimin}} \cdot
  d_{f}\big(\pi_{0},~\pi\big)\big] \\
& = \frac{1}{\sqrt{\pimin}} \cdot \lambda_{\ast}^{n} \cdot
d_{\TV}\big(\pi_0,~\pi\big),
\end{align*}
as claimed.

\subsection{Proof of equation~\eqref{eq:mix-gap}}
\label{}
Let $\Dpi = \diag\big(\sqrt{\pi}\big)$. Then the matrix \mbox{$A =
  \Dpi P \Dpi^{-1}$} is symmetric and so has an eigendecomposition of
the form \mbox{$A = \gamma_{1}\gamma_{1}^{T} + \sum_{j = 2}^{d}
  \lambda_{j} \gamma_{j}\gamma_{j}^{T}$.}  Using this decomposition,
we have
\begin{align*}
P = \ones\pi^{T} + \sum_{j = 2}^{d} \lambda_{j}h_{j}q_{j}^{T},
\end{align*}
where $h_j \mydefn \Dpi^{-1}\gamma_j$ and $q_j \mydefn
\Dpi\gamma_j$. Note that the vectors $\{q_j\}_{j=2}^d$ correspond to
the left eigenvectors associated with the eigenvalues
$\{\lambda_j\}_{j=2}^d$.

Now, if we let $\pi_0$ be an arbitrary distribution over $[d]$, we
have
\begin{align*}
d_{f}(\pi_n, ~\pi) & = \big|\pi_{0}^{T}P^{n}f - \pi^{T}P^{n}f\big| \;
\leq \; \big|(\pi_0 - \pi)^{T}P^{n}f\big| .
\end{align*}
Defining $P_f \mydefn \ones\pi^{T} + \sum_{j \in J_f}
\lambda_{j}h_{j}q_{j}^{T}$, we have $ P^{n}f = P_{f}^{n} f$.
Moreover, if we define \mbox{$\tilde{P}_{f} \mydefn \sum_{j \in J_f}
  \lambda_{j}h_{j}q_{j}^{T}$}, and correspondingly
\mbox{$\tilde{A}_{f} \mydefn \Dpi \tilde{P}_{f} \Dpi^{-1}$,} we then
have the relation $\big(\pi_0 - \pi\big)^{T}\tilde{P}_{f} = \big(\pi_0
- \pi\big)^{T}P_{f}$.  Consequently, by the definition of the operator
norm and sub-multiplicativity, we have
\begin{align*}
d_{f}\big(\pi_n,~\pi\big) & \leq \big|\big(\pi_0 -
\pi\big)^{T}\tilde{P}_{f}^{n}f\big| \\
 & \leq \opnorm{\tilde{A}_{f}}^{n} \| \Dpi f \|_2 \big \|
\Dpi^{-1}\big(\pi_0 - \pi\big) \big \|_2 \\
& = \sqrt{\E_{\pi}\big[f^2 \big] \cdot \sum_{i \in [d]}
  \frac{\big(\pi_{0,i} - \pi_i\big)^2 }{\pi_i}} \cdot
\lambda_{f}^{n}d_{f}\big(\pi_0,~\pi\big) .
\end{align*}
In order to complete the proof, let $Z \in \bits^{d}$ denote the
indicator vector $Z_{j} = \ones\big(X_{0} = j\big)$. Observe that the
function
\begin{align*}
r(z) \mydefn \sum_{i \in [d]} \frac{\big(z_{i} - \pi_i\big)^2 }{\pi_i}
\end{align*}
is convex in terms of $z$. Thus, Jensen's inequality implies that
\begin{align*}
\E_{\pi_0}\big[ r(Z)\big] \geq r\big(\E_{\pi_0}\big[Z\big]\big) =
r\big(\pi_0\big) = \sum_{i \in [d]} \frac{\big(\pi_{0,i} -
  \pi_i\big)^2 }{\pi_i} .
\end{align*}
On the other hand, for any fixed value $X_{0} = j$, corresponding to
$Z = e_j$, we have
\begin{align*}
r\big(z\big) = r\big(e_j\big) = \frac{\big(1 - \pi_j\big)^2 }{\pi_j}
+ \sum_{i \neq j} \pi_{i} = \frac{1 - \pi_j}{\pi_j} \leq
\frac{1}{\pimin}.
\end{align*}
We deduce that $d_{f}(\pi_n,~\pi) \leq
\sqrt{\frac{\E_{\pi}[f^2]}{\pimin}} \cdot \lambda_{f}^{n} \cdot
d_{f}(\pi_0,~\pi)$, as claimed.


\subsection{Proof of Lemma~\ref{lem:mix-gap-all}}

We observe that
\begin{align*}
\big |(\pi_0 - \pi)^{T} h_{J}(n) \big| & \leq \| \pi_0 -\pi \|_{1} \;
\|h_{J}(n) \|_{\infty} \\
& = 2 d_{\TV} \big(\pi_0, ~\pi \big) \; \| h_{J}(n)\|_{\infty} \\
& \leq 2d_{\TV}\big(\pi_0,~\pi\big) \; \Big \{ \sum_{j \in J}
\big|\lambda_{j}\big|^{n} \cdot \big|q_{j}^{T}f\big| \cdot \| h_{j}
\|_{\infty} \Big \} \\
& \leq 2 d_{\TV} \big(\pi_0,~\pi\big) \; \Big \{ 2 \big| J \big| \cdot
\max_{j \in J} \big| q_{j}^{T}f\big| \cdot \max_{j \in J} \| h_{j}
\|_{\infty} \Big \},
\end{align*}
as claimed.


\subsection{Proof of Lemma~\ref{lem:mix-gap-all-oracle}}

We proceed in a similar fashion as in the proof of
equation~\eqref{eq:mix-gap}. Begin with the identity proved there,
viz.
\begin{align*}
d_{f} \big(\pi_n,~\pi\big) = \big| (\pi_0 - \pi)^{T}
\tilde{P}_{f}^{n}f\big|,
\end{align*}
where $\tilde{P}_{f} = \sum_{j \in J_f}
\lambda_{j}h_{j}q_{j}^{T}$. Now decompose $\tilde{P}_{f}$ further into
\begin{align*}
P_{J} = \sum_{j \in J} \lambda_{j}h_{j}q_{j}^{T} ~~ \text{and}
~~P_{-J} = \sum_{j \in J_{f}\setminus J} \lambda_{j}h_{j}q_{j}^{T}.
\end{align*}
Note also that $\tilde{P}_{f}^{n} = P_{J}^{n} + P_{-J}^{n}$. We thus
find that
\begin{align*}
d_{f}(\pi_n,~\pi) & \leq \big| (\pi_0 - \pi)^{T} P_{J}^{n}f\big| +
\big| (\pi_0 - \pi)^{T} P_{-J}^{n} f \big|.
\end{align*}
Now observe that $P_{J}^{n}f = h_{J} (n)$, so $\big| (\pi_0 - \pi)^{T}
P_{J}^{n} f \big| = \big| (\pi_0 - \pi)^{T} h_{J} (n)|$.  On the other
hand, the second term can be bounded using the argument from the proof
of equation~\eqref{eq:mix-gap} to obtain
\begin{align*}
\big|(\pi_0 - \pi)^{T} P_{-J}^{n}f\big| \leq
\sqrt{\frac{\E_{\pi}\big[f^2 \big]}{\pimin}} \cdot
\lambda_{-J_{\delta}}^{n} \cdot \df(\pi_0,~\pi),
\end{align*}
as claimed.


\section{Proofs for Section~\ref{subsec:example-cycle}}
\label{app:proof-example-cycle}

In this section, we provide detailed proofs of the
bound~\eqref{eq:conc-random}, as well as the other claims about the
random function example on $C_{2d}$.

\begin{prop}
\label{prop:Tf-random}
Let $f \colon [d] \rightarrow [0,~1]$ with $f(i) \sim \tau$ iid from
some distribution on $[0,~1]$. There exists a universal constant
$c_{0} > 0$ such that with probability $\geq 1 -
\frac{\delta^{\ast}}{128\sqrt{d \log{d}}}$ over the randomness $f$, we
have
\begin{align*}
 T_{f}\big(\delta\big) \leq
 \frac{c_{0}d\log{d}\log{\frac{128d}{\delta}}}{\delta^2 } \qquad
 \mbox{for all $0 < \delta \leq \delta^{\ast}$.}
\end{align*}
\end{prop}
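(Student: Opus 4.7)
The plan is to apply Lemma \ref{lem:mix-gap-all} with $J = J_\delta$ as in equation~\eqref{eq:J-random} and then solve for the number of steps $n$ needed to bring both terms below $\delta$. Before unpacking this, I record what is known structurally about the chain. For the lazy walk on $C_{2d}$, the stationary distribution is uniform, so $\pi_{\min} = 1/(2d)$, and a standard computation~\citep{Lev08Markov} gives eigenvalues $\lambda_j = \tfrac{1 + \cos(\pi j/d)}{2}$ together with orthonormal eigenvectors proportional to $\cos(\pi j u/d)$ and $\sin(\pi j u/d)$. After the diagonal rescaling by $D_\pi = (2d)^{-1/2} I$ used in Section~\ref{subsec:mixing-bounds}, both the left eigenvectors $q_j$ and right eigenvectors $h_j$ are just trigonometric functions, and in particular $\|h_j\|_\infty \le 1$ for every $j$. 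These eigenvalues are nonnegative, so $|\lambda_{-J_\delta}| = \lambda_{-J_\delta}$.

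The first step is a uniform concentration bound on the inner products $q_j^T f$. Since the entries of $f$ are independent samples from a distribution on $[0,1]$ and $q_j$ has entries of magnitude $O(1/\sqrt{d})$ with $\|q_j\|_2 = O(1)$, Hoeffding's inequality gives $|q_j^T f - \mathbb{E}[q_j^T f]| \lesssim \sqrt{\log(d)/d}$ with probability $1 - d^{-C}$ for any desired constant $C$. A union bound over all $2d$ indices $j$ yields, simultaneously for all such $j$, $|q_j^T f| \lesssim \sqrt{\log(d)/d}$, with failure probability at most $c\, d^{-(C-1)}$. By choosing $C$ large enough we can comfortably absorb the factor $\delta^*/(128\sqrt{d \log d})$ in the proposition. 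Crucially, this event depends only on $f$ and not on $\delta$, so it can be conditioned on once and the rest of the argument carried out deterministically for every $\delta$.

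The second step is to use this to bound $\Delta_{J_\delta}^*$. By definition $|J_\delta| \le 8\delta\sqrt{d/\log d}$, and combining with $\|h_j\|_\infty \le 1$ and the concentration bound above gives $\Delta_{J_\delta}^* \le 2 |J_\delta| \cdot \max_{j\in J_\delta}\|h_j\|_\infty \cdot \max_{j\in J_\delta}|q_j^T f| \lesssim \delta$, uniformly in $\delta$. Picking the universal constant inside $J_\delta$ properly (this is the role of the constants $4$ and $c_0$ in equations~\eqref{eq:J-random} and~\eqref{eq:Tf-random}), we ensure this contribution to $d_f(\pi_n,\pi)$ is at most $\delta/2$, irrespective of $n$. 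The third step handles the complementary piece of the spectrum: for $j \notin J_\delta$, $\pi j/d$ and $\pi(2d-j)/d$ are both at least $4\pi\delta/\sqrt{d\log d}$, so the Taylor expansion $1-\cos\theta \ge \theta^2/4$ on $[0,\pi]$ gives $1 - \lambda_{-J_\delta} \gtrsim \delta^2/(d\log d)$. Using $\mathbb{E}_\pi[f^2] \le 1$ and $\pi_{\min} = 1/(2d)$, the second term in Lemma~\ref{lem:mix-gap-all} is at most $\sqrt{2d}\,\lambda_{-J_\delta}^n \le \sqrt{2d}\,\exp(-c\delta^2 n/(d\log d))$.

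The final step is to require this second term to also be $\le \delta/2$, which solves to $n \gtrsim \frac{d\log d}{\delta^2}\bigl[\log(\sqrt{2d}/\delta)\bigr] \asymp \frac{d\log d\,\log(d/\delta)}{\delta^2}$, and tracking constants carefully recovers the stated $\frac{c_0 d\log d\,\log(128 d/\delta)}{\delta^2}$ bound. The argument holds simultaneously for every $\delta \in (0,\delta^*]$ because the only random ingredient — the concentration of $\{q_j^T f\}_j$ — was controlled once and for all by the union bound. The main technical obstacle is precisely this uniformity: we must arrange the single high-probability event over $f$ so that the resulting $\Delta_{J_\delta}^*$ bound scales like $\delta$ uniformly over $\delta$, which is why the defining inequality for $J_\delta$ is scaled proportionally to $\delta$; a minor additional care is needed at the endpoints, where $|J_\delta|$ can be as small as one or two indices, but the bound $\Delta_{J_\delta}^* \lesssim \delta$ still holds because a single term in the sum only contributes $O(\sqrt{\log d/d}) \le \delta$ once $\delta \gtrsim \sqrt{\log d/d}$ and is vacuous otherwise after adjusting $c_0$.
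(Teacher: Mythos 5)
Your proposal follows essentially the same route as the paper: apply Lemma~\ref{lem:mix-gap-all} with $J = J_\delta$, control $\max_{j \in J_\delta}|q_j^T f|$ by sub-Gaussian concentration of the inner products (using $q_j \perp \ones$ so that $\E[q_j^Tf]=0$), bound $\Delta_{J_\delta}^{\ast} \lesssim \delta$ via $|J_\delta| \lesssim \delta\sqrt{d/\log d}$ and $\|h_j\|_\infty \le 1$, lower-bound $1-\lambda_{-J_\delta} \gtrsim \delta^2/(d\log d)$ by a Taylor expansion of the cosine, and solve for $n$; your observation that a single high-probability event over $f$ suffices for all $\delta \le \delta^\ast$ is exactly how the paper handles uniformity (its events $\Event_\delta$ are nested). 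The one place you deviate is the union bound: you union over all $2d$ indices and obtain failure probability $\mathrm{poly}(1/d)$, whereas the proposition asserts failure probability $\delta^\ast/(128\sqrt{d\log d})$, which scales \emph{linearly} in $\delta^\ast$; the paper gets this by unioning only over the $|J_{\delta^\ast}| \lesssim \delta^\ast \sqrt{d/\log d}$ indices that actually matter. For a fixed constant $\delta^\ast$ your bound absorbs the stated probability for large $d$, but if $\delta^\ast$ is allowed to be very small (say polynomially small in $d$), your version either misses the stated probability or forces a larger threshold $r \asymp \sqrt{\log(d/\delta^\ast)/d}$ that breaks the $\Delta_{J_\delta}^\ast \lesssim \delta$ estimate --- so to recover the proposition as literally stated you should restrict the union bound to $J_{\delta^\ast}$.
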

\begin{proof}
We proceed by defining a ``good event'' $\Event_{\delta}$, and then
showing that the stated bound on $\Tf(\delta)$ holds conditioned on
this event.  The final step is to show that $\P[\Event_\delta]$ is
suitably close to one, as claimed.

The event $\Event_{\delta}$ is defined in terms of the interaction
between $f$ and the eigenspaces of $P$ corresponding to eigenvalues
close to $1$. More precisely, denote the indices of these eigenvalues
by
\begin{align*}
J_{\delta} & \mydefn \Big \{ j \in \{1, \ldots, 2d-1\} \; \mid \; j
\leq 4\delta\sqrt{\frac{d}{\log{d}}}~~ \text{or} ~~ j \geq 2d -
4\delta\sqrt{\frac{d}{\log{d}}} \Big \}.
\end{align*}
The good event $\Event_\delta$ occurs when $f$ has small inner product
with all the corresponding eigenfunctions---that is
\begin{align*}
\Event_{\delta} & \mydefn \Big \{ \max_{j \in J_{\delta}} |q_j^T f|
\leq 2 \sqrt{\frac{10 \log d}{d} } \Big \}.
\end{align*}
Viewed as family of events indexed by $\delta$, these events form a
decreasing sequence.  (In particular, the associated sequence of sets
$J_{\delta}$ is increasing in $\delta$, in that whenever $\delta \leq
\delta^{\ast}$, we are guaranteed that $J_{\delta} \subset
J_{\delta^{\ast}}$.) 


\paragraph{Establishing the bound conditionally on $\Event_\delta$:}

We now exploit the spectral properties of the transition matrix to
bound $\Tf$ conditionally on the event $\Event_\delta$.  Recall that
the lazy random walk on $C_{2d}$ has eigenvalues $\lambda_{j} =
\frac{1}{2} \big( 1 + \cos(\frac{\pi j}{d}) \big)$ for $j \in [d]$,
with corresponding unit eigenvectors
\begin{align*}
v_{j}^{T} = \frac{1}{\sqrt{2d}} \begin{pmatrix} 1 & \omega_{j} &
  \cdots & \omega_{j}^{2d - 1} \end{pmatrix},~~ \omega_{j} \colon =
e^{\frac{\pi i j}{d}}.
\end{align*}
(See ~\citep{Lev08Markov} for details.)  We note that this
diagonalization allows us to write \mbox{$P = \ones\pi^{T} + \sum_{j =
    1}^{2d - 1} \lambda_{j} h_{j}q_{j}^{T}$,} where $h_{j} = \sqrt{2d}
\cdot v_{j}$ and $q_{j} = \frac{v_{j}}{\sqrt{2d}}$, where we have used
the fact that $\diag\big(\sqrt{\pi}\big) = \frac{1}{\sqrt{2d}} \cdot
I$. Note that $\|h_{j}\|_{\infty} = 1$.

Combining Lemma~\ref{lem:mix-gap-all} with the bounds
$\lambda_{J_{\delta}} \leq 1$, $\|h_{j}\|_{\infty} \leq 1$, and
$\big|J_{\delta}\big| \leq 8\delta\sqrt{\frac{d}{\log{d}}}$, we find
that
\begin{align*}
 d_{f} (\pi_n,~\pi) & \leq 16\delta\sqrt{\frac{d}{\log{d}}} \cdot
 \max_{j \in J} \big|q_{j}^{T}f\big| + \sqrt{d} \cdot
 \lambda_{-J_{\delta}}^{n}.
\end{align*}
Therefore, when the event $\Event_{\delta}$ holds, we have
\begin{align}
\label{eq:df-random-good-event-1}
d_{f}(\pi_{n},~\pi) & \leq 32\sqrt{10} \cdot \delta + \sqrt{d} \cdot
\lambda_{-J_{\delta}}^{n}.
\end{align}
In order to conclude the argument, we use the fact that
\begin{align*}
\lambda_{-J_{\delta}} = \frac{1 + \max_{j \in J_{f}\setminus
    J_{\delta}} \cos\big(\frac{\pi j}{d}\big)}2 \leq \frac{1 +
  \cos\big(\frac{\pi j_{0}}{d}\big)}2 ,
\end{align*}
where $j_{0} = 4\delta\sqrt{\frac{d}{\log{d}}}$. On the other hand, we
also have
\begin{align*}
\cos\big(\pi x\big) \leq 1 - \frac{\pi^2 x^2 }2 +
\frac{\pi^{4}x^{4}}{24} \leq 1 - \frac{\pi^2 x^2 }{12}, \qquad
\mbox{for all $|x| \leq 1$,}
\end{align*}
which implies that
\begin{align*}
\lambda_{-J_{\delta}} \leq 1 - \frac{2\pi^2 \delta^2 }{3d\log{d}} \leq
\exp\big(-\frac{2\pi^2 \delta^2 }{3d\log{d}}\big).
\end{align*}
Together with equation~\eqref{eq:df-random-good-event-1}, this bound
implies that for $n \geq \frac{3d \log{d}
  \log{\frac{d}{\delta}}}{2\pi^2 \delta^2 }$, we have $\sqrt{d}
\lambda_{-J_{\delta}}^{n} \leq \delta$, whence
\begin{align*}
d_{f}(\pi_{n},~\pi) \leq (32\sqrt{10} + 1) \, \delta \leq 128 \delta.
\end{align*}
Replacing $\delta$ by $\frac{\delta}{128}$ throughout, we conclude
that for
\begin{align*}
n \geq \frac{3 \, (128)^2 d \log{d} \log{\frac{128d}{\delta}}}{2 \pi^2
  \delta^2 } = \frac{3 \cdot 2^{13}}{\pi^2 } \cdot \frac{d\log{d} \log
  \frac{128d}{\delta}}{\delta^2},
\end{align*}
we have $d_{f}(\pi_n,~\pi)  \leq \delta$ with probability at least
$\P\left(\Event_{\delta/128}\right)$.


\paragraph{Controlling the probability of $\Event_\delta$:}
It now suffices to prove $\P(\Event_{\delta}) \geq 1 -
\frac{\delta}{\sqrt{d \log{d}}}$, since this implies that $\P
\left(\Event_{\delta/128}\right) \geq 1 - \frac{\delta}{128\sqrt{d
    \log{d}}}$, as required.  In order to do so, observe that the
vectors $\{q_j\}_{j=1}^d$ are rescaled versions of an orthonormal
collection of eigenvectors, and hence
\begin{align*}
\E\big[q_{j}^{T}f\big] = \E_{\nu}\left[\mu\right] \cdot q_{j}^{T}\ones = 0.
\end{align*}
We can write the inner product as \mbox{$q_j^T f = A_{j} + i B_{j}$,}
where $(A_j, B_j)$ are a pair of real numbers.  The triangle
inequality then guarantees that \mbox{$|q_j^T f| \leq |A_j| + |B_j|$,}
so that it suffices to control these two absolute values.

By definition, we have
\begin{align*}
A_{j} = \frac{1}{2d} \sum_{\ell = 0}^{2d - 1} f\big(\ell\big) \cdot
\cos\big(\frac{\pi j \ell}{d}\big),
\end{align*}
showing that it is the sum of sub-Gaussian random variables with
parameters $\sigma_{\ell, j}^2 = \cos^2 \big(\frac{\pi j \ell}{d}\big)
\leq 1$.  Thus, the variable $A_{j}$ is sub-Gaussian with parameter at
most $\sigma_{j}^2 \leq \frac{1}{2d}$. A parallel argument applies to
the scalar $B_{j}$, showing that it is also sub-Gaussian with
parameter at most $\sigma_j^2$.

By the triangle inequality, we have \mbox{$|q_{j}^{T}f| \leq |A_{j}| +
  |B_{j}|$,} so it suffices to bound $\big|A_{j}\big|$ and $|B_{j}|$
separately.  In order to do so, we use sub-Gaussianity to obtain
\begin{align*}
\P\big(\max_{j \in J} |A_{j}| \geq r \big) & \leq |J| \cdot
e^{-\frac{r^2 }2 } \leq 8 \delta\sqrt{\frac{d}{\log{d}}} \cdot
e^{-\frac{d r^2 }2 }.
\end{align*}
With $r \mydefn \sqrt{\frac{2 \log{16d}}{d}}$, we have
\begin{align*}
\P\big(\max_{j \in J_{\delta}} \big|A_{j}\big| \geq \sqrt{\frac{2
    \log{16d}}{d}}\big) \leq \frac{\delta}{2\sqrt{d \log{d}}}.
\end{align*}
Applying a similar argument to $B_{j}$ and taking a union bound, we
find that
\begin{align*}
\P\big(\max_{j \in J_{\delta}} \big|q_{j}^{T}f\big| \geq
2\sqrt{\frac{2 \log{16d}}{d}}\big) \leq \frac{\delta}{\sqrt{d
    \log{d}}} .
\end{align*}

Since $2\sqrt{\frac{2\log{16d}}{d}} \leq 2 \sqrt{\frac{10 \log d}{d}}$
for $d \geq 2$, we deduce that
\begin{align*}
1 - \P\left(\Event_{\delta}\right) = \P\left(\max_{j \in J_{\delta}}
\left|q_{j}^{T}f\right| \geq 2 \sqrt{\frac{10 \log d}{d}}\right) \leq
\frac{\delta}{\sqrt{d \log{d}}},
\end{align*}
as required.
\end{proof}
The concentration result now follows.


\begin{prop} 
\label{prop:conc-random}
The random function $f$ on $C_{2d}$ defined in
equation~\eqref{eq:f-random} satisfies the mixing time and tail bounds
\begin{align*}
T_{f} \big(\frac{\epsilon}{2}\big) & \leq \frac{c_{0} d
  \log{d}\big[\log{d} + \log\big(\frac{1}{\epsilon^2
    }\big)\big]}{\epsilon^{2}}, \\ \intertext{and} \P \left[
  \frac{1}{N}\sum_{n = \Tf\big(\epsilon/2\big)}^{N + \Tf
    (\epsilon/2)} f\big(X_n\big) \geq \mu + \epsilon \right] &
\leq \exp\big(-\frac{c_{1}\epsilon^{4}N}{d \log{d}\big[\log
    (\frac{1}{\epsilon}) + \log{d}\big]}\big).  \\
\end{align*}
with probability at least $1 - \frac{c_{2}\epsilon^2 }{\sqrt{d
    \log{d}}}$ over the randomness of $f$
provided $\epsilon \geq c_{3}\big(\frac{\log{d}}{d}\big)^{1/2}$, where
$c_{0}, c_{1}, c_{2}, c_{3} > 0$ are universal constants.
\end{prop}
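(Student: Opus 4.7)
The mixing time bound is an immediate consequence of Proposition~\ref{prop:Tf-random} applied at $\delta = \epsilon/2$; the more delicate piece is the concentration statement, which I would derive by applying the Hoeffding-type bound of Corollary~\ref{cor:hoeffding-derived-Jf} with the subset $J := J_{\delta}$ defined in equation~\eqref{eq:J-random}, where $\delta = c_{0}\epsilon$ for a small absolute constant $c_{0}$ chosen later. The plan is to show that with this choice, the two ingredients $\Delta_{J}^{\ast}$ and $1 - \lambda_{-J}$ required by the corollary take values that, after plugging in, reproduce the advertised exponent $c_{1} \epsilon^{4} N / (d\log d \cdot [\log(1/\epsilon) + \log d])$.

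The first step is to control $\Delta_{J}^{\ast} = 2|J|\max_{j\in J}\|h_{j}\|_{\infty}\max_{j\in J}|q_{j}^{T}f|$. On $C_{2d}$ one has the exact identities $\|h_{j}\|_{\infty} = 1$ for all $j$ (because $\diag(\sqrt{\pi}) = (2d)^{-1/2} I$ and $v_{j}$ is the Fourier basis), and the cardinality bound $|J_{\delta}| \leq 8\delta\sqrt{d/\log d}$ is immediate from the definition. The subgaussian argument already carried out in the proof of Proposition~\ref{prop:Tf-random} gives, on an event of probability at least $1 - c\cdot \delta/\sqrt{d\log d}$, the uniform bound $\max_{j \in J_{\delta}} |q_{j}^{T}f| \lesssim \sqrt{\log d/d}$. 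Multiplying the three factors yields $\Delta_{J}^{\ast} \lesssim \delta = c_{0}\epsilon$, so choosing $c_{0}$ small enough makes $\Delta_{J}^{\ast} \leq \epsilon/4$. I then set $\Delta_{J} = \Delta = \epsilon/4$ so that Corollary~\ref{cor:hoeffding-derived-Jf} produces a one-sided deviation bound at level $2(\Delta_{J} + \Delta) = \epsilon$, which is exactly the quantity of interest.

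For the exponent, I would bound $1 - \lambda_{-J}$ from below using the eigenvalues $\lambda_{j} = \tfrac{1}{2}(1 + \cos(\pi j/d))$. By construction of $J_{\delta}$, the maximal remaining eigenvalue satisfies $\lambda_{-J} \leq \tfrac{1}{2}\big(1 + \cos(4\pi\delta/\sqrt{d\log d})\big)$, and a second-order Taylor expansion of cosine gives $1 - \lambda_{-J} \geq c_{1}\epsilon^{2}/(d\log d)$. One also has $\log(1/(\Delta\sqrt{\pimin})) = \log(4\sqrt{2d}/\epsilon) \asymp \log d + \log(1/\epsilon)$, so that plugging into the first branch of Corollary~\ref{cor:hoeffding-derived-Jf} (which applies because $\Delta = \epsilon/4 \leq \lambda_{-J}/\sqrt{\pimin}$ is trivially satisfied when $\epsilon \lesssim 1$) yields the claimed tail bound. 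The main obstacle I anticipate is bookkeeping rather than a conceptual hurdle: verifying that the side condition $N \geq \Tf(\Delta_{J} + \Delta)$ required by the corollary is compatible with the mixing-time bound from the first part, handling the regime restriction $\epsilon \geq c_{3}\sqrt{\log d/d}$ (needed so that $J_{\delta}$ is nonempty and the subgaussian event has meaningful probability), and reconciling the $\epsilon$ versus $\epsilon^{2}$ dependence of the exceptional probability, which may require refining the threshold in the definition of $\Event_{\delta}$ so that the failure probability scales as $\epsilon^{2}/\sqrt{d\log d}$ rather than $\epsilon/\sqrt{d\log d}$.
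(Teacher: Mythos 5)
Your proposal matches the paper's own proof essentially step for step: Proposition~\ref{prop:Tf-random} supplies the mixing-time bound and the good event controlling $\max_{j\in J_{\delta}}|q_{j}^{T}f|$, one takes $\delta \asymp \epsilon$ so that $\Delta_{J}^{\ast}\leq \epsilon/4$, sets $\Delta_{J}=\Delta=\epsilon/4$, lower-bounds $1-\lambda_{-J_{\delta}}\gtrsim \epsilon^{2}/(d\log d)$ via the cosine expansion, and plugs into Corollary~\ref{cor:hoeffding-derived-Jf}. The one caveat you flag---that the argument naturally yields an exceptional probability of order $\epsilon/\sqrt{d\log d}$ rather than the stated $\epsilon^{2}/\sqrt{d\log d}$---is a real observation, but it is equally present in the paper's own proof, so your proposal is faithful to (and no weaker than) the published argument.
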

\begin{proof}
We first note that from the proof of Proposition~\ref{prop:Tf-random},
we have the lower bound \mbox{$1 - \lambda_{-J_{\delta}} \geq
  \frac{c_{4}\delta^2 }{d\log{d}}$,} valid for all \mbox{$\delta \in
  (0,1)$.}  The proof of the previous proposition guarantees that
$\Delta_{J}^{\ast} \leq 32\sqrt{10}\delta$, so setting $\delta =
\frac{\epsilon }{128\sqrt{10}}$ yields
\begin{align*}
\frac{\epsilon}{4} = 32\sqrt{10}\delta \geq \Delta_{J}^{\ast},
\quad \mbox{and} \quad 1 - \lambda_{-J_{\delta}} \geq
\frac{c_{4}'\epsilon^{2}}{d \log{d}}.
\end{align*}
Now, by Proposition~\ref{prop:Tf-random}, there is a universal
constant $c_5 > 0$ such that, with probability at least $1 -
\frac{\delta}{128\sqrt{d \log d}}$, we have
\begin{align*}
T_{f}\big(\delta'\big) \leq \frac{c_{5} d
  \log{d}\log{d/\delta'}}{\big(\delta'\big)^2 } \qquad \mbox{for all $\delta' \geq \delta$.}
\end{align*}
In particular, we have
\begin{align*}
 T_{f}\big(\frac{\epsilon}{2}\big) \leq \frac{c_2 ' d
   \log{d}\log{d/\epsilon}}{\epsilon^{2}}
\end{align*}
with this same probability. Thus, we have this bound on $T_{f}$ with
the high probability claimed in the statement of the proposition.

We now finish by taking $\Delta = \frac{\epsilon}{4}$ in
Corollary~\ref{cor:hoeffding-derived-Jf}. Noting that
\mbox{$\Delta_{J} + \Delta = \frac{\epsilon}{2}$} and \mbox{$1 -
  \lambda_{-J} \geq \frac{c_{4}'\epsilon^{2}}{d \log{d}}$} completes
the proof.
\end{proof}


\section{Proofs for Section~\ref{subsec:confidence}}\label{app:proof-confidence}

We now prove correctness of the confidence intervals based on the
uniform Hoeffding bound~\eqref{eq:unif-hoeffding}, and the
Berry-Esseen bound~\eqref{eq:markov-clt-be}.


\subsection{Proof of claim~\eqref{eq:confidence-unif-hoeffding}}
\label{appsec:confidence-unif-hoeffding}

This claim follows directly from a modified uniform Hoeffding bound,
due to~\cite{Pau12Conc}.  In particular, for any integer \mbox{$T_{0}
  \geq 0$,} let $d_{\TV}(T_0) = \sup_{\pi_0} d_{\TV}( \pi_{0} P^{T_0},
~\pi )$ be the worst-case total variation distance from stationarity
after $T_0$ steps. Using this notation, \cite{Pau12Conc} shows that
for any starting distribution $\pi_0$ and any bounded function $f
\colon [d] \rightarrow [0,~1]$, we have
\begin{align}
\label{EqnPaulinHoeffding}
 \P\big(\big|\frac{1}{N - T_0}\sum_{n = T_0 + 1}^{N} f\big(X_{n}\big)
 - \mu\big| \geq \epsilon\big) \leq 2
 \exp\big(-\frac{\gamma_{0}}{2\big(2 - \gamma_0\big)} \cdot \epsilon^2
 N\big) + 2d_{\TV}\big(T_0\big) .
\end{align}

We now use the bound~\eqref{EqnPaulinHoeffding} to prove our
claim~\eqref{eq:confidence-unif-hoeffding}.  Recall that we have
chosen $T_0$ so that $d_{\TV}\big(T_0\big) \leq \alpha_{0}/2$.
Therefore, the bound~\eqref{EqnPaulinHoeffding} implies that
\begin{align*}
\P \left[ \big|\frac{1}{N - T_0}\sum_{n = T_0 + 1}^{N} f(X_{n}) - \mu
  \big| \geq \epsilon_{N} \right] & \leq 2 \exp \Big \{
-\frac{\gamma_{0}}{2\big(2 - \gamma_0\big)} \cdot \epsilon_{N}^2 N
\Big \} + \alpha_0 \\
& \leq 2 \cdot \frac{\alpha - \alpha_0}2 + \alpha_0 = \alpha,
\end{align*}
as required.


\subsection{Proof of the claim~\eqref{eq:confidence-clt-be}}
\label{appsec:confidence-clt-be}

We now use the result~\eqref{eq:markov-clt-be} to prove the
claim~\eqref{eq:confidence-clt-be}.

By the lower bound on $N$, we have
\begin{align*}
 \frac{e^{-\gamma_{0}N}}{3\sqrt{\pimin}} \leq \frac{\alpha}{6} ~~
 \text{and} ~~ \frac{13}{\sigfasym\sqrt{\pimin}} \cdot
 \frac{1}{\gamma_{0}\sqrt{N}} \leq \frac{\alpha}{6}.
\end{align*}
It follows from equation~\eqref{eq:markov-clt-be} that
\begin{align*}
\P \left[ \frac{1}{\sigfasym N}\sum_{n = 1}^{N} f\big(X_n\big) \geq
  \mu + \epsilon_{N} \right] & \leq \Phi\big(\epsilon_{N}\sqrt{N}\big)
+ \frac{\alpha}{3} \\ 
& \leq \exp\big(-\frac{N}2 \cdot \epsilon_{N}^2 \big) +
\frac{\alpha}{3} \\ 
& = \frac{\alpha}2 ,
\end{align*}
and since a matching bound holds for the lower tail, we get the
desired result.


\section{Proofs for Section~\ref{subsec:proofs-testing}}
\label{app:proofs-proofs-testing}

In this section, we gather the proofs of
Lemmas~\ref{lem:seq-seq-indiff-adapt-sum}--\ref{lem:seq-diff-stopping-time-sum}.


\subsection{Proof of Lemma~\ref{lem:seq-seq-indiff-adapt-sum}}

Observe that the function
\begin{align*}
g(\zeta) & \mydefn \exp \Big \{ -\log(1/\beta)\, (\zeta + \zeta^{-1})
\Big \}
\end{align*}
is increasing on $(0,~1]$ and decreasing on $[1,~\infty)$. Therefore,
  bringing $\zeta$ closer to $1$ can only increase the value of the
  function.

Now, for fixed $k \geq 1$, define
\begin{align*}
\ell_k \mydefn \begin{cases} \min\big \{ \ell \colon \big(1 +
  \xi\big)^{\ell} \geq \zeta_{k}\big \} &~ \text{if}~ \zeta_k \leq 1,
  \\ \max\big \{ \ell \colon \big(1 + \xi\Big)^{\ell} \leq
  \zeta_{k}\big \} &~ \text{otherwise.}
\end{cases} 
\end{align*}
In words, the quantity $\ell_k$ is either the smallest integer such
that $\big(1 + \xi\big)^{\ell}$ is bigger than $\zeta_k$ (if $\zeta_k
\leq 1$) or the largest integer such that $\big(1 + \xi\big)^{\ell}$
is smaller than $\zeta_k$ (if $\zeta_k \geq 1$).

With this definition, we see that $\big(1 + \xi\big)^{\ell_k}$ always
lies between $\zeta_k$ and $1$, so that we are guaranteed that
$g\big(\big(1 + \xi\big)^{\ell_k}\big) \geq g\big(\zeta_k\big)$, and
hence
\begin{align*}
\sum_{k = 1}^{\infty} g\big(\zeta_k\big) \leq \sum_{k = 1}^{\infty}
g\big( (1 + \xi)^{\ell_k} \big).
\end{align*}
Thus, it suffices to show that at most two distinct values of $k$ map
to a single $\ell_k$. Indeed, when this mapping condition holds, we
have
\begin{align*}
 \sum_{k = 1}^{\infty} g\big(\zeta_k\big) \leq 2\sum_{\ell =
   -\infty}^{\infty} g\big(\big(1 + \xi\big)^{\ell}\big) \leq
 4\sum_{\ell = 0}^{\infty} g\big(\big(1 + \xi\big)^{\ell}\big) .
\end{align*}

In order to prove the stated mapping condition, note first that
$\ell_k$ is clearly nondecreasing in $k$, so that we need to prove
that $\ell_{k + 2} > \ell_{k}$ for all $k \geq 1$.  It is sufficent to
show that $\zeta_{k + 2} \geq \big(1 + \xi\big)\zeta_{k}$, since this
inequality implies that $\ell_{k + 2} \geq \ell_{k} + 1$.

We now exploit the fact that $\zeta_k = an_{k}$ for some absolute
constant $a$, where $n_{k} = \lfloor n_{0}\big(1 +
\xi\big)^{k}\rfloor$. For this, let $b = n_{0}\big(1 + \xi\big)^{k}$,
so that $n_k = \lfloor b \rfloor$. Since $n_{k + 1} > n_{k}$, we have
$\big(1 + \xi\big)b \geq \lfloor \big(1 + \xi\big)b\rfloor \geq
\lfloor b \rfloor + 1$, and hence
\begin{align*}
\frac{n_{k + 2}}{n_{k}} \; = \; \frac{\lfloor \big(1 + \xi\big)^2
  b\rfloor}{\lfloor b\rfloor} & \geq \frac{\big(1 + \xi\big)^2 b -
  1}{\lfloor b \rfloor} \\ 
& \geq \frac{\big(1 + \xi\big)\big[\lfloor b \rfloor + 1\big] -
  1}{\lfloor b \rfloor} \\
& \geq  1 + \xi,
\end{align*}
as required.\footnote{We thank Daniel Paulin for suggesting this
  argument as an elaboration on the shorter proof
  in~\citet{Gyo15Test}.}


\subsection{Proof of Lemma~\ref{lem:seq-seq-indiff-adapt-group}}
\label{subapp:proof-seq-seq-indiff-adapt-group}

When $c = 0$ and $\ell = 0$, we note that the claim obviously holds
with equality. On the other hand, the left hand side is increasing in
$\ell$, so that the $c = 0$ case follows immediately.

Turning to the case $c > 0$, we first note that it is equivalent to
show that
\begin{align*}
(1 + \xi)^{2\ell} - 2 (c + 1) (1 + \xi)^{\ell} + 1 \geq 0 \qquad
  \mbox{for all $\ell \in ( \frac{9c}{5\xi}, \frac{9 (c + 1)}{5\xi}
    )$.}
\end{align*}
It suffices to show that $(1 + \xi)^{\ell}$ is at least as large as
the largest root of the the quadratic equation $z^2 - 2\big(c +
1\big)z + 1 = 0$.  This largest root is given by 
\begin{align*}
z^* = c + 1 + \sqrt{c \, (c + 2)} \leq 2(c + 1) .
\end{align*}
Consequently, it is enough to show that $\ell \geq \frac{\log{2\left(c + 1\right)}}{\log (1 + \xi)}$.  Since
$\frac{9 c}{5 \xi}$ is a lower bound on $\ell$, we need to verify that
\begin{align*}
\frac{9c}{5\xi} \geq \frac{\log{2\left(c + 1\right)}}{\log (1 + \xi )}.
\end{align*}
In order to verify this claim, note first that since $\xi \leq \frac{2}{5}$,
we have $\log (1 + \xi) \geq \xi - \frac{1}2 \xi^2 \geq
\frac{4}{5}\xi$, whence
\begin{align*}
 \frac{\log{2\left(c + 1\right)}}{\log (1 + \xi )} & \leq \frac{5 \log{2\left(c + 1\right)}}{4\xi} .
\end{align*}
Differentiating the upper bound in $c$, we find that its derivative is
\begin{align*}
\frac{5}{4\left(c + 1\right)\xi} \leq \frac{5}{8\xi} \leq \frac{9}{5\xi},
\end{align*}
so it actually suffices to verify the claim for $c = 1$, which can be done by 
checking numerically that $\frac{5 \log{4}}{4} \leq \frac{9}{5}$. 


\subsection{Proof of Lemma~\ref{lem:seq-seq-stopping-time-sum}}
\label{subapp:proof-seq-seq-stopping-time-sum}

Our strategy is to split the infinite sum into two parts: one
corresponding to the range of $s$ where $h$ is constant and equal to
$1$ and the other to the range of $s$ where $h$ is decreasing. In
terms of the $N_{k}$, these two parts are obtained by splitting the
sum into terms with $k < k_{0}$ and $k \geq k_{0}$, where $k_{0} \geq
1$ is minimal such that $M \leq \Delta N_{k}$ for $k \geq k_{0}$.

For convenience in what follows, let us introduce the convenient shorthand
\begin{align*}
T_k & \mydefn \exp\big(-\frac{\big(\Delta N_{k} - M\big)_{+}^2
}{2\tau_{f}\big(\delta/2\big)N_{k}}\big).
\end{align*}
Now, if $k_{0} = 1$, we note that $h$ must then be decreasing for $s
\geq N_{1}$, so that
\begin{align*}
\sum_{k = 1}^{\infty} \big(N_{k + 1} - N_{k}\big) T_k \leq
\int_{N_{1}}^{\infty} h\big(s\big) \der s .
\end{align*}
Otherwise, if $k_{0} > 1$, we have
\begin{align*}
 \sum_{k = k_{0}}^{\infty} \big(N_{k + 1} - N_{k}\big) T_k & \leq
 \int_{N_{k_{0}}}^{\infty} h\big(s\big) \der s.
\end{align*}
For $k < k_0$, we have $T_k = 1$, so that when $k < k_{0} - 1$, we
have
\begin{align*}
\big(N_{k + 1} - N_{k}\big)\exp\big(-\frac{\big(\Delta N_{k} -
  M\big)_{+}^2 }{2\tau_{f}\big(\delta/2\big)N_{k}}\big) =
\int_{N_{k}}^{N_{k + 1}} h\big(s\big)~ \der s .
\end{align*}
Thus
\begin{align*}
\sum_{k = 1}^{k_{0} - 1} \big(N_{k + 1} - N_{k}\big)
\exp\big(-\frac{\big(\Delta N_{k} - M\big)_{+}^2
}{2\tau_{f}\big(\delta/2\big)N_{k}}\big) = \int_{N_{1}}^{N_{k_{0} -
    1}} h\big(s\big) \der s.
\end{align*}
Note that this implies
\begin{align*}
 \int_{N_{1}}^{\infty} \exp\big(-\frac{\big(\Delta s - M\big)_{+}^2
 }{2\tau_{f}\big(\delta/2\big)s}\big)~ \der s \geq N_{k_{0} - 1} .
\end{align*}
 Finally, we observe that $N_{k + 1} \leq \big(1 + \xi\big)N_{k} + 1 +
 \xi$, so that \mbox{$N_{k_{0}} - N_{k_{0} - 1} \leq \xi N_{k_{0} - 1}
   + 1 + \xi$.} Putting together the pieces, we have
\begin{align*} 
\big(N_{k_{0}} - N_{k_{0} - 1}\big)\exp\big(-\frac{\big(\Delta
  N_{k_{0} - 1} - M\big)_{+}^2 }{2\tau_{f}\big(\delta/2\big)N_{k_{0} -
    1}}\big) \leq 1 + \xi + \xi\int_{N_{1}}^{\infty}
\exp\big(-\frac{\big(\Delta s - M\big)_{+}^2
}{2\tau_{f}\big(\delta/2\big)s}\big)~ \der s,
\end{align*}
and hence
\begin{align*}
 \sum_{k = 1}^{\infty} \big(N_{k + 1} - N_{k}\big)
 \exp\big(-\frac{\big(\Delta N_{k} - M\big)_{+}^2
 }{2\tau_{f}\big(\delta/2\big)N_{k}}\big) \leq 1 + \xi + \big(1 +
 \xi\big)\int_{N_{1}}^{\infty} h\big(s\big) \der s .
\end{align*}


\subsection{Proof of Lemma~\ref{lem:seq-diff-stopping-time-sum}}
\label{subapp:proof-seq-diff-stopping-time-sum}

Observe that for $k > k_{0}^{\ast}$, we have $\Delta - \epsilon_{k} \geq
\frac{\Delta}2 $. It follows that for $k > k_{0}^{\ast}$, we have $T_{f,k}^{+} \leq
T_{f}\big(\frac{\Delta}{4}\big)$. Thus, we can bound each term
in the sum by
\begin{align*}
 \big(N_{k + 1} - N_{k}\big)\exp\big(-\frac{N_{k}}{8T_{f,k}^{+}} \cdot
 \big(\Delta - \epsilon_{k}\big)_{+}^2 \big) \leq
 \underbrace{\big(N_{k + 1} - N_{k}\big)
   \exp\big(-\frac{N_{k}}{T_{f}\big(\frac{\Delta}{4}\big)}
   \cdot \frac{\Delta^2}{32} \big) }_{\SPECIAL_k}.
\end{align*}
Furthermore, the exponential in the definition of $\SPECIAL_k$ is a
decreasing function of $N_{k}$, so we further bound the overall sum as
\begin{align*}
\sum_{k = k_{0}^{\ast} + 1}^{\infty} \SPECIAL_k & \leq \sum_{n =
  N_{0}^{\ast} + 1}^{\infty} \exp\big(-n \cdot \frac{\Delta^2
}{32\Tf\big(\frac{\Delta}{4}\big)}\big) \\
& = \exp\big(-N_{0}^{\ast} \cdot \frac{\Delta^2
}{32\Tf\big(\frac{\Delta}{4}\big)}\big) \times \sum_{m =
  1}^{\infty}\exp\big(-m \cdot \frac{\Delta^2
}{32\Tf\big(\frac{\Delta}{4}\big)}\big) \\ 
& =
\exp\big(-\frac{N_{0}^{\ast}}{8T_{f}\big(\frac{\big(\Delta/2\big)
  }{2}\big)} \cdot \big(\frac{\Delta}2 \big)^2 \big) \times \sum_{m =
  1}^{\infty}\exp\big(-m \cdot \frac{\Delta^2
}{32T_{f}\big(\frac{\Delta}{4}\big)}\big) .
\end{align*}
On the other hand, by the definition of $N_{0}^{\ast}$,
$\epsilon_{k_{0}^{\ast}} \leq \frac{\Delta}2 $, so
\begin{align*}
 T_{f}\big( \frac{\big(\Delta/2\big)}{2}\big) \leq T_{f}\big(
 \frac{\epsilon_{k_{0}^{\ast}}}{2} \big).
\end{align*}
By the definition of $\epsilon_{k}$, however, we know that
\begin{align*}
\frac{\epsilon_{k}^{2}}{8T_{f}\big(\frac{\epsilon_{k}}{2}\big)}
\geq \frac{\log\big(1/\alpha\big) + 1 + 2 \log{k}}{N_{k}} \geq
\frac{\log\big(1/\alpha\big)}{N_{k}},
\end{align*}
which implies that $(\Delta/2)^2 N_{0}^{\ast} \geq \log \big (1/\alpha
\big) \; 8 T_{f} \big( \frac{(\Delta/2)}{2}\big)$.  Re-arranging
yields the claim.

\end{appendices}
\end{document}